
\documentclass{amsart}
\newif\ifshort
\shortfalse

\ifshort
\usepackage[top=3.7cm, bottom=3.7cm, left=3.7cm, right=3.7cm]{geometry}
\pagestyle{plain}
\fi

\usepackage{sty-en} 

\usepackage[pdftex]{hyperref}
\hypersetup{colorlinks,citecolor=blue,linktocpage,hyperindex=true,backref=true}

\usepackage{tikz-cd, bm}

\usepackage{todonotes}

\title[Compact moduli of Enriques surfaces]{Compact moduli 
  of Enriques surfaces with a numerical polarization of degree 2}
\date{December 27, 2023} 

\author{Valery Alexeev}
\ifshort
  \else
\address{Department of Mathematics, University of Georgia, Athens GA 30602, USA}
\email{valery@uga.edu}
\fi

\author{Philip Engel} 
\ifshort
  \else
\address{Department of Mathematics, University of Bonn, Mathematisches Institut, Bonn, Germany}
\email{engel@math.uni-bonn.de}
\fi

\author{D. Zack Garza} 
\ifshort
  \else
\address{Department of Mathematics, University of Georgia, Athens GA 30602, USA}
\email{zack@uga.edu}
\fi

\author{Luca Schaffler}
\ifshort
  \else
\address{Dipartimento di Matematica e Fisica, Universit\`a degli Studi Roma Tre, Largo San Leonardo Murialdo 1, 00146, Roma, Italy}
\email{luca.schaffler@uniroma3.it}
\fi

\begin{document}
\begin{abstract}
  We describe a geometric, stable pair compactification of the
  moduli space of Enriques surfaces with a numerical polarization of
  degree~$2$, and identify it with a semitoroidal compactification
  of the period space.
\end{abstract}
 
\maketitle
\setcounter{tocdepth}{1}
\tableofcontents

\section{Introduction} 
\label{sec:intro}

Enriques surfaces are quotients of K3 surfaces by basepoint free
involutions. They satisfy $2K\sim 0$ and $q=0$ and occupy a place
somewhere in between rational and K3 surfaces. Unlike K3 surfaces,
there are only finitely many moduli spaces of polarized Enriques surfaces, see
\cite{gritsenko2016moduli-polarized}. Each of them parameterizes the
same surfaces, with some finite data attached.

In this paper we consider the moduli space $\fen$ of pairs
$(Z,[\cL_Z])$, where $Z$ is an Enriques surface with $ADE$
singularities and $[\cL_Z]\in \Pic Z / \bZ_2$ is an ample numerical
polarization of degree~$2$.  Equivalently, this is the moduli space of
$ADE$ Enriques surfaces $(Z,\cM)$ with a $2$-divisible polarization
$\cM = \cL_Z^{\otimes 2} \in \Pic Z$ of degree~$8$.
The Baily-Borel compactification $\ofen\ubb$ of this space was
described by Sterk \cite{sterk1991compactifications-enriques1}. 

By the classification of big and nef linear system on Enriques
surfaces \cite{cossec1983projective-models}, see also
\cite{cossec1989enriques-surfaces, cossec2024enriques-surfaces1}, the
linear system $|\cM|$ is basepoint free and defines a double cover
$\rho\colon Z\to W$ to a quartic del Pezzo surface with $4A_1$ or
$A_3+2A_1$ singularities. The ramification divisor $R_Z\in |\cM|$ of
$\rho$ is ample and the pair $(Z,\epsilon R_Z)$ is log canonical for
any $0<\epsilon \ll 1$. Thus, the moduli space $\fen$ admits a
geometric, modular compactification $\ofen$ in the KSBA moduli space
of pairs $(Z,\epsilon R_Z)$ with semi log canonical singularities,
$K_Z\equiv 0$ and ample $\bQ$-Cartier divisor $R_Z$. See
\cite{kollar2023families-of-varieties} for their general theory.  Our
main result, in Section \ref{sec:semitoroidal}, is

\begin{theorem} The normalization of $\ofen$ is a semitoroidal
  compactification $\ofen^\fF$
  corresponding to a collection
  $\fF = \{\fF^k\}_{k=1,2,3,4,5}$ of explicit semifans, one
  for each $0$-cusp of $\fen$, and it is dominated by a toroidal
  compactification $\ofen\ucox$ for a collection
  $\fF\cox = \{\fF^k\cox\}_{k=1,2,3,4,5}$ of Coxeter fans.
\end{theorem}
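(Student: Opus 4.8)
The plan is to identify the normalization $\ofen^\nu$ with a compactification of the period space by extending the period map in both directions and then invoking the structure theory of semitoroidal compactifications (following Looijenga and Alexeev--Engel). Write $\fen = \mathcal D/\Gamma$, where $\mathcal D$ is the $10$-dimensional Type IV domain and $\Gamma$ the arithmetic group occurring in Sterk's description \cite{sterk1991compactifications-enriques1}, so that $\ofen\ubb$ is the Baily--Borel compactification of $\mathcal D/\Gamma$ and $\fen$ has five $0$-cusps. Properness of $\ofen$ follows from the KSBA theory of \cite{kollar2023families-of-varieties}, since the pairs $(Z,\epsilon R_Z)$ form a bounded family of slc pairs with $K_Z\equiv 0$ and $R_Z$ ample. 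First I would extend the period map to a morphism $p\colon \ofen^\nu \to \ofen\ubb$: a one-parameter degeneration of polarized Enriques surfaces over a punctured disk has a KSBA limit by properness; after semistable (Kulikov-type) reduction the monodromy becomes unipotent, so the limiting mixed Hodge structure of a Kulikov model determines a well-defined point of $\ofen\ubb$; normality of $\ofen^\nu$, properness of $\ofen\ubb$, and the valuative criterion assemble these into $p$. Granting in addition the extension $q\colon \ofen\ucox \to \ofen^\nu$ of the inverse period map constructed below, the recognition theorem for semitoroidal compactifications shows that $\ofen^\nu = \ofen^\fF$, where at each cusp the semifan $\fF^k$ is the coarsening of the Coxeter fan $\fF^k\cox$ whose cones are the loci over which the fibers of $q$ are constant, and that $\ofen^\fF$ is dominated by $\ofen\ucox$ via $q$.

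Constructing $q$ is the heart of the argument, and the tool is the canonical double cover $\rho\colon Z\to W$ onto the degree-$4$ del Pezzo surface. At each of the five $0$-cusps I would: (i) read off from \cite{sterk1991compactifications-enriques1} the hyperbolic lattice associated to the cusp, verify that it is reflective, and take $\fF^k\cox$ to be the fan of its Weyl chambers, which is then $\Gamma$-admissible; (ii) classify the Kulikov-type (Type~II and Type~III) degenerations of $(Z,\cM)$ over the toroidal strata lying over that cusp, equivalently the stable degenerations of the del Pezzo pairs $(W,\tfrac12 R_W)$, handling both del Pezzo types $4A_1$ and $A_3+2A_1$ and tracking the involution through $\rho$, and assemble the resulting family of Kulikov models over $\ofen\ucox$; (iii) prove that $R_Z \in |\cM|$ is a \emph{recognizable} divisor, i.e. that its flat limit on each Kulikov model is determined by the limiting Hodge structure --- by the del Pezzo description this reduces to controlling the limit of the branch divisor $R_W$; (iv) run the relative log MMP on $(\text{Kulikov model},\,\epsilon\cdot(\text{limit of }R_Z))$ to obtain the stable pair, and use recognizability together with separatedness of $\ofen$ to conclude that it depends only on the cone of $\fF^k\cox$ containing the monodromy. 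This produces $q$, simultaneously exhibits $\fF^k$ as the coarsening of $\fF^k\cox$ that collapses precisely the cones on which the stable pair is constant, and --- after reading off this coarsening combinatorially --- yields the explicit semifans asserted in the theorem.

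The main obstacle is steps (ii)--(iv) of the construction of $q$: the explicit, case-by-case classification of all Type~II and Type~III stable degenerations at the five cusps, and especially the proof that $R_Z$ is recognizable in every case --- it is this recognizability property that forces the stable-pair compactification to be semitoroidal, rather than merely some proper modification of $\ofen\ubb$. By comparison, once a Kulikov family with a recognizable limit divisor is in hand, running the MMP and carrying out the final combinatorial identification of the $\fF^k$ inside the positive cones of the cusp lattices, together with their comparison to the Coxeter fans, should be a finite if lengthy computation.
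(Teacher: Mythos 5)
There is a genuine gap. Your plan (ii)--(iv) amounts to redoing the entire recognizable-divisor program of \cite{alexeev2023compact} directly on the Enriques side: classifying Kulikov-type degenerations of $(Z,\cM)$, defining and proving recognizability of $R_Z$, and showing that the resulting stable models are constant on cones of a semifan in the Enriques cusp lattices. But that machinery --- Kulikov models with dual complex $S^2$, the notion of a recognizable divisor, and the theorem that a recognizable divisor produces a semitoroidal normalization of the KSBA compactification --- is established only for K3 surfaces. For Enriques degenerations the semistable models are Morrison's (or the dlt models of this paper, with dual complexes $\bR\bP^2$ or $\bD^2$ and non-snc singularities), and no Enriques analogue of \cite[Thm.~1]{alexeev2023compact} exists to cite. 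Your proposed substitute, reducing to ``stable degenerations of the del Pezzo pairs $(W,\tfrac12 B_W)$,'' does not repair this: $W$ is a rational surface, the pair $(W,\tfrac12 B_W)$ does not determine $Z$ without the $2$-torsion class in $\operatorname{Cl}(W)$, and there is no degeneration/period theory for such pairs to invoke. So the step you yourself flag as the main obstacle --- recognizability of $R_Z$ --- is not a lengthy verification inside an existing framework; in your setup it requires building the framework first.

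The paper sidesteps all of this by working on the K3 double cover. Since $R\subset X$ is the ramification divisor of the nonsymplectic del Pezzo involution, it is recognizable by \cite{alexeev2021nonsymplectic}, and \cite{alexeev2022mirror-symmetric} already identifies the normalization of $\ofhyp$ with the semitoroidal compactification for the ramification semifans $\fF\ram$. One then takes the closure $B$ of the image of $\fen\hookrightarrow\ofhyp$ (a Noether--Lefschetz locus, so $B^\nu$ is automatically the semitoroidal compactification for the semifans $\fF^k$ obtained by intersecting $\fF\ram$ with the Enriques sublattices), extends $\ien$ to the universal family over $B$ by uniqueness of KSBA limits, and quotients to get a family of stable Enriques pairs, hence a classifying map $B^\nu\to\ofen$. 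The only new content is that this map is \emph{finite} --- i.e.\ that quotienting by $\ien$ loses no moduli and the semifan is not further coarsened --- which the paper proves by a combinatorial comparison: a degeneration $(Z,\epsilon R_Z)$ has the maximal number of double curves iff its K3 cover $(X,\epsilon R)$ does, which rules out any strict coarsening of the maximal cones. This finiteness/no-further-coarsening point is absent from your proposal (your $\fF^k$ is defined a posteriori as ``whatever coarsening makes $q$ constant,'' which concedes explicitness), and it is the one place where a genuinely new argument, rather than a citation, is needed even in the paper's streamlined route. I would recommend restructuring your proof around the immersion $\fen\to\fhyp$ and the already-compactified K3 theory, and then isolating the finiteness of $B^\nu\to\ofen$ as the point requiring proof.
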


In Sections~\ref{sec:abcde}, \ref{sec:stable-models} we describe all
stable pairs parametrized by the boundary of $\ofen$. The irreducible
components of these pairs turn out to be surfaces that naturally
correspond to the ABCDE Dynkin diagrams. They generalize the ADE
surfaces of \cite{alexeev17ade-surfaces} that appear on the boundary
of K3 moduli spaces.

In Section~\ref{sec:ias} we also provide a detailed description of
some nice models of Enriques degenerations, which are slightly more
singular than simple normal crossing. Instead, they are dlt
(divisorially log terminal, see
\cite[Def. 2.37]{kollar1998birational-geometry}).

Just as weighted graphs encode semistable degenerations of curves,
$K$-trivial semistable (i.e.~Kulikov) degenerations of K3 surfaces $\cX\to (C,0)$ 
are encoded by integral-affine structures on the $2$-sphere, or $\ias$
\cite{gross2015mirror-symmetry-for-log, 
engel2018looijenga, engel2021smoothings}.
An $\ias$ is a collection
of local embeddings of $S^2$ minus a finite set into the flat plane, 
which differ on overlaps by $\SL_2(\bZ)\ltimes \bZ^2$. Complete 
triangulations of $\ias$ which take their vertices in $\bZ^2$, under the local
embedding, describe the dual complexes $\Gamma(\cX_0)$ of 
Kulikov degenerations. In this paper, we realize Diagram (\ref{eq:basic})
on the integral-affine level, by constructing $\Gamma(\cX_0)$ together with
two commuting involutions~$\iota_{\enr, \,{\rm IA}}$ and $\iota_{\dP,{\rm IA}}$
of the $\ias$.

The quotient of $\Gamma(\cX_0)$ by $\iota_{\enr, \,{\rm IA}}$ is either an integral-affine
structure on a disk $\bD^2$ or a real-projective plane $\bR\bP^2$. These
 ${\rm IA}\bD^2$ and ${\rm IA}\bR\bP^2$ are the dual complexes $\Gamma(\cZ_0)$
 of particularly nice dlt models of Enriques surface degenerations $\cZ\to (C,0)$.
 From these dlt models, one can extract a completely explicit description
 of the stable limit of any degeneration in $\ofen$ from Hodge-theoretic data.

The validity of our description of these Kulikov, dlt, and stable models relies
on the general theory of compactifications of moduli spaces of K3 surfaces developed
by the first two authors
\cite{alexeev2023compact,alexeev2022mirror-symmetric}. Most relevant
to the situation at hand,
\cite{alexeev2022mirror-symmetric} considers the
$75$ moduli spaces of K3 surfaces with a non-symplectic
involution. For $50$ of them, the ramification divisor $R$ contains a
component $C$ of genus $g(C)\ge2$ providing a polarization. For these,
\cite{alexeev2022mirror-symmetric} describes the Kulikov models
and KSBA compactification for the pairs $(X,\epsilon C)$.

Crucial for us here is the
compactified moduli space $\oF_{(2,2,0)}$ of K3 surfaces of degree~$4$
with a del Pezzo involution $\idp$ corresponding, generically, to double covers
of $\bP^1\times \bP^1$ branched over a curve of bidegree $(4,4)$.
By immersing the moduli space $\ofen\to \oF_{(2,2,0)}$
and understanding the Enriques involution on the fibers, 
we give a description of $\ofen$ and its universal family.

\medskip

The plan of the paper is as follows. In Section~\ref{sec:setup} we
discuss the model of Enriques surfaces which we use in this paper.
We also recall the description of the moduli space $\fen$ after Sterk
\cite{sterk1991compactifications-enriques1}.
Then we describe morphisms between $\fen$,
$\fhyp$ and the moduli $\fken$ of unpolarized Enriques surfaces.
Finally, we briefly recall the theory of KSBA
stable pairs and their compact moduli.

In Section~\ref{sec:cusps} we recall the cusp diagrams of $\fhyp$,
$\fken$, $\fen$ and determine how they map to each other. Next, we
describe Coxeter diagrams associated to each of the five $0$-cusps and
the $9$ $1$-cusps of $\fen$. The diagrams are the same as in Sterk
\cite{sterk1991compactifications-enriques1} but we find them
in a different way, ``folding by involutions'' the Coxeter diagrams of
the lattices $U\oplus E_8^2$ and $U(2)\oplus E_8^2$ corresponding
to the two $0$-cusps of $\fhyp$. This is the combinatorial heart of
the paper.  An idea from \cite{alexeev2023stable-pair,
  alexeev2022mirror-symmetric} employed here is that one can read off
degenerations of K3 surfaces directly from Coxeter
diagrams. Consequently, we are able to read off degenerations of
Enriques surfaces from the Coxeter diagrams of $\fen$.

Using the above description, in Section~\ref{sec:ias} we find
integral-affine spheres with two commuting
involutions~$\iota_{\enr, \,{\rm IA}}$ and $\iota_{\dP,{\rm IA}}$ and
the corresponding Kulikov models of K3 surfaces with involutions.
We then describe how to construct the dlt models for degenerations
of Enriques surfaces, and give some detailed examples.

In Section~\ref{sec:ksba}, as an application of general theory and in a
similar way to \cite{alexeev2022compactifications-moduli,
  alexeev2022mirror-symmetric} we describe the KSBA compactification
$\ofen$ and the stable pairs appearing on the boundary. For K3
surfaces, the irreducible components of degenerations are ADE
surfaces of \cite{alexeev17ade-surfaces}. In the case of Enriques
surfaces, additional B and C
surfaces appear, corresponding to $B$ and $C$ Dynkin diagrams
resulting from folding ADE diagrams.
We describe them in Sections~\ref{sec:abcde} and~\ref{sec:stable-models}.

\smallskip
We work over the complex numbers, although most of the results can be
generalized to any field of characteristic $\ne2$. 

\begin{acknowledgements}
  The first author was partially supported by the NSF under
  DMS-2201222. The second author was partially supported by the NSF
  under DMS-2201221.  The fourth author is a member of the INdAM group
  GNSAGA and was partially supported by the projects ``Programma per
  Giovani Ricercatori Rita Levi Montalcini'', PRIN2020KKWT53 and PRIN
  2022 -- CUP E53D23005790006.

  We thank Igor Dolgachev for helpful comments.
\end{acknowledgements}

\section{General setup}
\label{sec:setup}

\subsection{The main diagram,  general case}
\label{sec:main-diagram}

Consider a surface $Y=\bP^1\times\bP^1$ with an involution
$\barien\colon (x,y)\to (-x,-y)$ and quotient $W=Y/\tau$.  Let
$B\in |-2K_Y|$ be a $\barien$-invariant divisor with at worst $ADE$
singularities not passing through the four points with
$x,y\in\{0,\infty\}$ fixed by $\barien$. The double cover
$\pi\colon X\to Y$ branched in $B$ is a K3 surface with a \emph{del
  Pezzo involution} $\idp$ so that $Y=X/\idp$. The involution
$\barien$ on $Y$ lifts to a basepoint-free \emph{Enriques involution}
$\ien$ on $X$ commuting with $\idp$ and the quotient $Z=X/\ien$ is an
Enriques surface.  The second lift of $\tau$ is a Nikulin involution
and the quotient $Z'=X/\inik$ is a K3 surface with eight $A_1$
singularities and possibly more.  This gives the following commutative
diagram:
\begin{equation}\label{eq:basic}
\begin{tikzcd}
  X \arrow{d}[swap]{\pi} \arrow[swap]{r}{\psi}
  \arrow[bend left=20]{rr}{\psi'}
  & Z \arrow{d}[swap]{\rho}
  & Z' \arrow{dl}{\rho'} 
  \\
  Y\arrow[swap]{r}{\varphi} 
  & W 
\end{tikzcd}    
\end{equation}

The surface $Y=\bP^1\times\bP^1$ is toric and the line bundle
$\cO(4,4)$ has, as its polytope, a square $Q$ with sides of lattice
length $4$, shown in the left panel of Figure~\ref{fig-square-triangle}. 
The surface $W$ is toric as well, for the same polytope but for the even
sublattice $\bZ^2_\even = \{(a,b) \mid a+b \in 2\bZ \}$ shown by gray
dots.
It is a quartic del Pezzo surface with four $A_1$ singularities.
Vectors in $\bZ^2_\even$ are in bijection with monomials
$x^ay^b$ invariant under the involution $\barien\colon (x,y)\to (-x,-y)$.
Here, we freely identify monomials $1,x,\dotsc x^4$ with
$x_0^4,x_0^3x_1, \dotsc, x_1^4$, and $1,y,\dotsc,y^4$ with
$y_0^4,y_0^3y_1,\dotsc, y_1^4$.

\begin{figure}[htbp]
  \includegraphics[width=200pt]{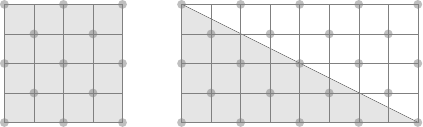}
  \smallskip
  \caption{Polytopes $Q$ and lattices for the toric surfaces $Y$ and $W$}
  \label{fig-square-triangle}
\end{figure}

Let $f(x,y)$ be a $\tau$-invariant polynomial of bidegree $(4,4)$ in which 
$1$, $x^4$, $y^4$, $x^4y^4$ have nonzero coefficients, so
that the hypersurface $\{f(x,y)=0\}\subset Y=V_Q$ does not contain
any of the
four torus-fixed points.  Let $P\subset\bR^3$ be the pyramid over
$Q\subset 0\times\bR^2$ with the vertex $(2,2,2)$, to which we
associate the monomial $z^2$. Then the K3 surface $X$ is a
hypersurface defined by the equation $z^2 + f(x,y)$ in the projective
toric variety $V_P$ associated with $P$. 
The polynomials $f(x,y)$
 invariant under $\barien$ are linear combinations of
$13$ monomials marked by gray dots. Thus, $f$ defines a point in an
open subset $U\subset\bP^{12}$ and in its quotient
$U/D_4\ltimes (\bC^*)^2$, of dimension~$10$.
There are three commuting involutions on $X$:
\begin{eqnarray*}
  &\text{del Pezzo} &\idp\colon (x,y,z) \to (x,y,-z)\\
  &\text{Enriques}  &\ien\colon (x,y,z) \to (-x,-y,-z)\\
  &\text{Nikulin}   &\inik\colon (x,y,z) \to (-x,-y,z)
\end{eqnarray*}
which together with the identity form a Klein-four group.
Both $\ien$ and $\inik$ are lifts of $\barien$.
On an affine subset of $X$ a nonvanishing
$2$-form is given by
\begin{displaymath}
\omega=\operatorname{Res}_X\frac{dx\wedge dy\wedge dz}{z^2 + f}.  
\end{displaymath}
One has $\idp^*\omega = \ien^*\omega=-\omega$ and
$\inik^*\omega=\omega$. So $\idp$ and $\ien$ are nonsymplectic and $\inik$
is symplectic.  The Enriques surface $Z$ is then a hypersurface in the
toric variety for the polytope $P$ but for the even sublattice $\bZ^3_\even
= \{(a,b,c) \mid a+b+c \in 2\bZ \}$. It is defined by the same
polynomial $z^2+f(x,y)$ whose monomials lie in $\bZ^3_\even$. 

\smallskip

Let $R$ be the ramification divisor of $\pi$.
The involution $\idp$ on $X$ descends to an involution $\baridp$ on
$Z$, and $W=Z/\baridp$.
Let $R_Z$ and $B_W$  be the ramification and branch divisors of~$\rho$.
Then $R=\psi^*(R_Z)$ and $R_Z = \frac12\psi_*(R)$. 
Since $R=\frac12 \pi^* (B)$ is an ample divisor, $R_Z$
is ample as well. One has $\cO(R_Z)=\cL_Z^{\otimes 2}\in\Pic Z$.

Horikawa \cite{horikawa1978periods-enriques2} analyzed in some detail
the sets of possible equations $f(x,y)$ and the maps from various opens
subsets of $\bP^{12}$ to the period domain $\bD/\Gamma$ and its
Baily-Borel compactification,  introduced in the next section.  In
particular, he showed that certain mildly singular $f(x,y)$ vanishing at
a torus-fixed point correspond to Coble surfaces, which are
$S_2$-quotients of nodal K3 surfaces.

The GIT compactification $\bP^{12}/\!/D_4\ltimes(\bC^*)^2$ was
described by Shah \cite{shah1981degenerations-of-k3}, who gave normal
forms for polystable orbits. As usual for the moduli of K3 surfaces
with a projective construction, the relation between the GIT and the
Baily-Borel compactifications is not straightforward,
cf. \cite{looijenga1986new-compactifications} for K3 surfaces of
degree~$2$ and \cite{laza2021git-versus} for degree $4$ K3 surfaces
which are double covers of $\bP^1\times\bP^1$.

\subsection{The main diagram, special case}
\label{sec:special-case}

The previous section describes the general case, when the K3
cover $X$ is non-unigonal. The special case corresponds to a Heegner
divisor in $\fen$ for which $(Y, L) = (\bP(1,1,2), \cO(4))$ is a
singular quadric.
The toric surfaces $Y$ and $W$ correspond to the same
polytope~$Q$ shown in the right panel of
Figure~\ref{fig-square-triangle} but for different lattices: $\bZ^2$
and $\bZ^2_\even$. The surface $W$ is a quartic del Pezzo surface with
$A_3+2A_1$ singularities.

There are still $13$ even monomials giving a family over
$U\subset\bP^{12}$. However, in this case $\Aut(W)$, the
centralizer of $\tau$ in $\Aut(Y)$, is three-dimensional, equal to
$S_2\ltimes (\bC^*{}^2\ltimes\bC)$. So there is only a
$9$-dimensional family of non-isomorphic surfaces.

\begin{remark}
  To our knowledge, Diagram~\eqref{eq:basic} minus $Z'$ first appeared
  in \cite[6.3.1]{cossec1983projective-models}, see also
  \cite[Cor. 4.7.2]{cossec1989enriques-surfaces}.  Horikawa model
  \cite{horikawa1978periods-enriques1} is a birationally isomorphic
  version of this diagram. 
  Ultimately, it can
  be traced back to the Enriques' double plane model
  \cite{enriques1906sopra-le-superficie}.  We refer the reader to
  \cite{cossec2024enriques-surfaces1} for a detailed historical
  account and many other projective models of Enriques surfaces.
\end{remark}

\begin{remark}\label{rem:two-dual-covers}
  The entire Diagram~\eqref{eq:basic} is intrinsic to the pair
  $(Z, [\cL_Z])$, in both the general and special
  cases. Indeed, $W= \varphi_{|\cL_Z^{\otimes 2}|}(Z)$,
  $Y = \Spec \cO_W \otimes \cO_W(A)$ where $A$ is the generator of
  $\Tors\operatorname{Cl}(W)=\bZ_2$, and $X = Y\times_W Z$.
  One has
  \begin{displaymath}
    Z' = \Spec \cO_W \oplus \cO_W(K_W)
    \quad\text{and}\quad
    Z = \Spec \cO_W \oplus \cO_W(K_W+A)
  \end{displaymath}
  with the multiplications defined by the divisor
  $B_W \in |-2K_W| = |-2(K_W+A)|$.
\end{remark}

\subsection{Period domains} 
\label{sec:periods}

We follow \cite{sterk1991compactifications-enriques1} for the moduli
space of Enriques surfaces with a numerical degree~$2$ polarization,
and \cite{alexeev2022mirror-symmetric} for the moduli space of K3
surfaces of degree~$4$ with a nonsymplectic involution.

Let $L=I\!I_{3,19} = U^3\oplus E_8^2 \simeq H^2(X,\bZ)$ be the K3
lattice. It is even, unimodular and has signature $(3,19)$. Here,
$U=I\!I_{1,1}$ and our $E_8=I\!I_{0,8}$ is a negative-definite unimodular
lattice. Let us write $L$ in block form:
\begin{displaymath} L = U \oplus U \oplus U \oplus E_8 \oplus E_8 = \{
(v,u,u',e,e') \mid v,u,u'\in U,\ e,e'\in E_8 \} 
\end{displaymath}  

\begin{definition}\label{def:involutions}
  Define three involutions $I_\dP$, $I_\enr$ and
  $I_\nik=I_\dP \circ I_\enr$ on $L$ corresponding to the Enriques,
  del Pezzo, and Nikulin involutions on K3 surfaces of degree~$4$ as
  follows (cf. \cite{peters2020on-k3-double} for the nodal case):
\begin{eqnarray*}
  &I_\dP\colon &(v,u,u',e,e') \to (-v,u',u, -e,-e') \\
  &I_\enr\colon &(v,u,u',e,e') \to (-v,u',u,e',e) \\
  &I_\nik\colon &(v,u,u',e,e') \to (v,u,u',-e', -e) 
\end{eqnarray*}
\end{definition}
Their $(\pm1)$-eigenspaces are
\begin{align*}
  &S_\dP:=L_\dP^+ = \Delta(U)
  \quad
  &&T_\dP:=L_\dP^- = U\oplus\Delta^-(U)\oplus E_8^2
  \\
  &S_\enr:=L_\enr^+ = \Delta(U)\oplus \Delta(E_8)
  \quad
  &&T_\enr:=L_\enr^- = U\oplus\Delta^-(U)\oplus \Delta^-(E_8)
  \\
  &L_\nik^+ = U^3 \oplus \Delta^-(E_8)
  &&L_\nik^- =  \Delta(E_8)
 \end{align*}
Here, $\Delta$ and $\Delta^-$ denote the diagonals and anti-diagonals
in $U^2$ and $E_8^2$. As lattices, they are isomorphic to 
\begin{align*}
  &S_\dP = U(2) = (2,2,0)_1
  \quad
  &&\tdp = U\oplus U(2)\oplus E_8^2 = (20,2,0)_2
  \\
  &S_\enr = U(2)\oplus E_8(2) = (10,10,0)_1
  \quad
  &&\ten = U\oplus U(2)\oplus E_8(2) = (12,10,0)_2
  \\
  &L_\nik^+ = U^3\oplus E_8(2) = (14,8,0)_3
  &&L_\nik^- = E_8(2) = (8,8,0)_0
\end{align*}
All these lattices are even and $2$-elementary, i.e.~with the
discriminant group $\Lambda^*/\Lambda \simeq \bZ_2^a$ for some
$a$. Recall, see e.g.  \cite{nikulin1979integer-symmetric}, that an
indefinite even $2$-elementary lattice is uniquely determined by its signature and
a triple $(r,a,\delta)$, where $r=\rk_\bZ(\Lambda)$,
$a=\rk_{\bZ_2}(\Lambda^*/\Lambda)$ and $\delta\in \{0,1\}$ is the coparity:
$\delta=0$ if the discriminant form
$q_\Lambda\colon \Lambda^*/\Lambda\to \frac12\bZ/2\bZ$,
$q_\Lambda(x) = x^2{\ \rm mod\ } 2\bZ$ 
is $\bZ$-valued
and $\delta=1$ otherwise. In our notation, $(r,a,\delta)_{n_+}$
denotes such a lattice of signature $(n_+,n_-)$.

\begin{lemma}\label{lem:unique-lattices}
  The sequence $U\oplus U(2)\oplus E_8(2) \to
  U\oplus U(2) \oplus E_8^2\to L$ of primitive embeddings is unique up to an isometry in $O(L)$.
\end{lemma}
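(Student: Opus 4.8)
The plan is to apply Nikulin's theory of primitive embeddings of even $2$-elementary lattices into even unimodular lattices, together with its relative/equivariant refinements, and to treat the chain of embeddings one arrow at a time. We have two lattices $M := U\oplus U(2)\oplus E_8(2)$ and $N := U\oplus U(2)\oplus E_8^2$, and the two maps $M\hookrightarrow N\hookrightarrow L$, and the claim is that the whole flag is unique up to $O(L)$.

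First I would handle the embedding $N\hookrightarrow L$. Here $N = T_\enr$ is an even $2$-elementary lattice of signature $(2,10)$ with invariants $(r,a,\delta) = (12,10,0)$ by the table above, and its orthogonal complement in $L$ should be $S_\enr = U(2)\oplus E_8(2)$, of signature $(1,9)$ with invariants $(10,10,0)$, with $q_{S_\enr} = -q_{N}$ as required by unimodularity of $L$. Nikulin's criterion (\cite{nikulin1979integer-symmetric}, Thm.~1.14.4 / Cor.~1.5.2) says a primitive embedding of an even lattice into an even unimodular lattice of the given signature is unique up to isometry provided the orthogonal complement is unique in its genus and the natural map $O(S_\enr)\to O(q_{S_\enr})$ is surjective. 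Since $S_\enr$ is indefinite and $2$-elementary, it is unique in its genus, and surjectivity of $O(S_\enr)\to O(q_{S_\enr})$ for indefinite $2$-elementary lattices is again standard from Nikulin's work (it follows from the surjectivity statements for $U(2)$- and $E_8(2)$-type summands, or can be quoted directly). This gives uniqueness of the arrow $N\hookrightarrow L$, and moreover identifies $N^\perp_L$ with $S_\enr$ canonically up to isometry.

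Next I would handle $M\hookrightarrow N$, which by the eigenlattice description is (isometric to) the inclusion $\ten\supset$ of index-related sublattice, equivalently the embedding with orthogonal complement $\Delta(E_8)\cong E_8(2)$ inside $N$: indeed $M^\perp_N = L_\nik^- = E_8(2) = (8,8,0)_0$, and one checks $M\oplus M^\perp_N$ has the right covolume in $N$. Here $N$ is no longer unimodular, so I would use the general version of Nikulin's theorem on primitive embeddings into a fixed (non-unimodular) lattice: such embeddings are classified by tuples consisting of a subgroup $H$ of the discriminant group together with a gluing isometry, modulo the actions of $O(q_M)$, $O(q_{M^\perp_N})$, and the stabilizer in $O(q_N)$. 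For our $2$-elementary lattices one computes that this set of data has a single orbit — the key inputs being that $q_N = q_M \oplus q_{M^\perp_N}$ orthogonally (no gluing: the sum $M\oplus E_8(2)$ is already primitively embedded with the full discriminant), which collapses the classification, plus surjectivity of $O(M)\to O(q_M)$ and of the relevant automorphism maps. So $M\hookrightarrow N$ is unique up to $O(N)$.

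Finally I would combine the two steps. The subtlety is that "unique up to $O(N)$" for the first map and "unique up to $O(L)$" for the second do not immediately compose into "the flag is unique up to $O(L)$"; one needs that every isometry of $N$ (or at least enough of them — those realizing the two possible positions of $M$ in $N$) extends to an isometry of $L$ fixing the embedding $N\hookrightarrow L$, i.e.~one needs surjectivity of the restriction map $\{g\in O(L) : g(N) = N\}\to O(N)$, or more precisely of $O(L,N)\to O(N)$. This again follows from Nikulin's extension criterion once we know $O(S_\enr)\to O(q_{S_\enr})$ is surjective (already used above): any $\phi\in O(N)$ acts on $q_N = -q_{S_\enr}$, lifts to some $\psi\in O(S_\enr)$, and $(\phi,\psi)$ glues to an element of $O(L)$ because $L$ is the unimodular overlattice of $N\oplus S_\enr$ determined by the glue map. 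Then given two flags $M\hookrightarrow N\hookrightarrow L$ and $M\hookrightarrow N'\hookrightarrow L$: by Step~1 find $g\in O(L)$ with $g(N') = N$; by Step~2 find $h\in O(N)$ matching $g(M\hookrightarrow N')$ with $M\hookrightarrow N$; extend $h$ to $\tilde h\in O(L)$ preserving $N$; then $\tilde h g$ carries the whole second flag to the first.

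The main obstacle I anticipate is bookkeeping the discriminant-form computations precisely enough — verifying that $q_N \cong q_M \perp q_{E_8(2)}$ as quadratic forms on $\bZ_2$-vector spaces and that the relevant covolumes match, so that the classifying set in Step~2 genuinely has one element — and confirming the surjectivity $O(S_\enr)\to O(q_{S_\enr})$ that all three steps rely on. Both are routine given Nikulin's tables (and $S_\enr = U(2)\oplus E_8(2)$ is exactly the kind of indefinite $2$-elementary lattice for which these statements are catalogued), but they are where an error could creep in. Everything else is a formal assembly of Nikulin's existence/uniqueness/extension package, which is why I expect the argument in the paper to be short and to cite \cite{nikulin1979integer-symmetric} for the inputs.
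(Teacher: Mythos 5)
Your overall strategy---treat the two arrows separately with Nikulin's machinery and then splice them using extension of isometries---is viable, but there are two concrete problems. The first is a bookkeeping slip: in Steps 1 and 3 you describe $N = U\oplus U(2)\oplus E_8^2 = T_\dP$ with the invariants of $M$. In fact $N$ has signature $(2,18)$ and invariants $(20,2,0)$, and its orthogonal complement in $L$ is $S_\dP = U(2)$, not $S_\enr$; the extension argument in Step 3 must therefore be run with $O(U(2))\to O(q_{U(2)})\cong \bZ_2$ (which is surjective), not with $O(S_\enr)\to O(q_{S_\enr})$. This is repairable. The second problem is the genuine gap: in Step 2 the claim that $q_N = q_M\perp q_{M^\perp_N}$ ``with no gluing'' is false. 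One has $|q_M| = 2^{10}$ and $|q_{E_8(2)}| = 2^{8}$, while $|q_N| = 2^{2}$, so $M\oplus M^\perp_N$ has index $2^{8}$ in $N$; concretely, $\Delta^-(E_8)\oplus\Delta(E_8)$ has index $2^8$ in $E_8^2$. The glue group is all of $(\bZ_2)^{8}\cong q_{E_8(2)}$, identified anti-isometrically with a subgroup of $q_M$. So the classification of primitive embeddings $M\hookrightarrow N$ does not collapse for the reason you give; you would have to invoke Nikulin's classification of embeddings into non-unimodular lattices with this nontrivial glue datum and verify transitivity of the relevant group actions on the set of admissible data. This is exactly the discriminant-form bookkeeping you flagged as the risk, and it is where the written argument fails.

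The paper sidesteps all of this by passing to orthogonal complements: the flag $T_\enr\subset T_\dP\subset L$ is unique up to $O(L)$ if and only if the complementary flag $S_\dP = U(2)\subset S_\enr = U(2)\oplus E_8(2)\subset L$ is. The outer embedding is then unique by Nikulin's Theorem 1.14.4, and the inner embedding becomes, after dividing the form by $2$, the embedding $U\hookrightarrow U\oplus E_8$, which splits off as a direct summand whose complement is an even negative-definite unimodular lattice of rank $8$, hence $E_8$; surjectivity of $O(L)\to O(T_\dP)$ and $O(L)\to O(T_\enr)$ assembles the two steps. If you want to keep your route, replacing your Step 2 by this rescaling trick on the complement side is the cleanest fix.
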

\begin{proof}
  By taking the orthogonals, this is equivalent to the condition that
  the sequence of primitive embeddings
  $U(2)\to U(2)\oplus E_8(2) \to L$ is unique. The second embedding is
  unique by \cite[1.14.4]{nikulin1979integer-symmetric}.  The first
  embedding is equivalent to the embedding
  $U\to \Lambda = U\oplus E_8$ and it is well known that it is
  unique. Indeed, $\Lambda=U\oplus U^\perp$ and $U^\perp\simeq E_8$.
  The homomorphisms $O(L)\to O(\tdp)$ and $O(L)\to O(\ten)$ are
  surjective by \cite[1.5.2, 3.6.3]{nikulin1979integer-symmetric}.
\end{proof}

\begin{definition}
We have type IV period domains $\bD(\ten)$ and $\bD(\tdp)$, where for a
lattice $\Lambda$ of signature $(2,n)$ the corresponding period domain  $\bD(\Lambda)$
is a connected component of
\begin{displaymath}
  \{ [x]\in \bP(\Lambda\otimes\bC) \mid
  x\cdot x=0, \ x\cdot \bar x > 0 \}.
\end{displaymath}
\end{definition}

Since, $\ten\subset \tdp$, one has $\bD(\ten) \subset \bD(\tdp)$. The
polarizations we consider in both cases are defined by the vector $h=e+f \in
U(2)=S_\dP \subset S_\enr$. Here, $\{e,f\}$ is the basis of $U(2)$ with
$e^2=f^2=0$, $e\cdot f=2$.

\begin{definition}
  Define the arithmetic group $\gen$ as the image in $O(\ten)$ of
  \begin{displaymath}
    \{g\in O(L) \mid g\circ I_\enr = I_\enr\circ g
    \text{ and } g(h)=h \}.
  \end{displaymath}
  Additionally, define $\Gamma_\enr= O(T_\enr)$ and $\Gamma_\dP = O(T_\dP)$.  
 We have $\gen = \Gamma_\enr \cap \Gamma_\dP.$
\end{definition}

Since $O(L)\to O(\ten)$ and $O(L)\to O(\tdp)$ are surjective by
\cite[1.5.2, 3.6.3]{nikulin1979integer-symmetric}, the homomorphisms
from the centralizer groups
$Z_{O(L)}(I_{\enr})\to \Gamma_\enr$ and
$Z_{O(L)}(I_{\dP})\to \Gamma_\dP$ are surjective.

\begin{definition}
  Define three quotients of period domains:
  \begin{displaymath}
    \fen = \bD(\ten)/\gen, \quad
    \fken = \bD(\ten)/\Gamma_\enr, \quad
    \fhyp = \bD(\tdp)/\Gamma_\dP.
  \end{displaymath}
\end{definition}

By \cite{alexeev2021nonsymplectic,alexeev2022mirror-symmetric},
$\fhyp$ is 
the coarse moduli spaces of K3 surfaces 
with ADE singularities and 
a nonsymplectic involution for which the 
$(+1)$-eigenspace $(\Pic X)^+$ is $(2,2,0)_1$.

By \cite[2.13]{namikawa1985periods-of-enriques} there is a unique
$(-2)$-vector $\alpha\in T_{\enr}$ modulo $\Gamma_\enr$. The
discriminant divisor $\Delta=\alpha^\perp \subset \bD(\ten)/\Gamma_\enr$
parameterizes quotients of nodal K3 surfaces by an involution fixing a
node. These are rational Coble surfaces with a
$\frac{(1,1)}4$-singularity.  It is well known that the points of
$(\bD(\ten)\setminus\Delta)/ \Gamma_\enr\subset\fken$ are in a
bijection with the isomorphism classes of Enriques surfaces.
%



By \cite[2.15]{namikawa1985periods-of-enriques} there are two
$\Gamma_\enr$-orbits of $(-4)$-vectors $\beta$ in $\ten$. The divisor
corresponding to the vector with
$\beta^\perp \simeq \la 4\ra \oplus U\oplus E_8(2)$
parameterizes nodal Enriques surfaces, whose
desingularizations contain a $(-2)$-curve.
The other $(-4)$-vector corresponds to the unigonal Enriques surfaces
which are double covers of $\bP(1,1,2)$.

By \cite{sterk1991compactifications-enriques1} the
complement of the discriminant divisor in $\fen = \bD(\ten)/\gen$ is
the coarse moduli space of Enriques surfaces with a numerical polarization of
degree~$2$. 

\begin{lemma}\label{lem:fen-closed-in-fhyp}
  $\fen$ is the normalization of a closed subvariety of $\fhyp$. 
\end{lemma}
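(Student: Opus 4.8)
The plan is to realize $\fen$ as the image of a natural morphism $\fen\to\fhyp$ and show this morphism is finite onto its image, so that $\fen$, being normal, must be the normalization of that image. First I would construct the map on the level of period domains. Since $\ten\subset\tdp$ is a primitive sublattice (by Lemma~\ref{lem:unique-lattices}) and the polarization vectors agree under the inclusion $U(2)=S_\dP\subset S_\enr$, we have $\bD(\ten)\subset\bD(\tdp)$, and this inclusion is equivariant for $\gen=\Gamma_\enr\cap\Gamma_\dP\hookrightarrow\Gamma_\dP=O(\tdp)$. Hence there is an induced morphism of quotients
\begin{displaymath}
  j\colon \fen=\bD(\ten)/\gen \longrightarrow \bD(\tdp)/\Gamma_\dP=\fhyp .
\end{displaymath}
Both sides are quasi-projective normal varieties (Baily--Borel), so $j$ is a morphism of varieties.

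Next I would show $j$ is quasi-finite. A point of $\bD(\ten)$ lying on $\bD(\tdp)$ has, as its image in $\fhyp$, a fiber controlled by the set of $\Gamma_\dP$-translates that land back in $\bD(\ten)$; concretely the fiber of $j$ over $[\omega]$ is $\{g\in\Gamma_\dP \mid g\bD(\ten)\ni[\omega]\}$ modulo the relevant stabilizers. Since $\ten=\tdp\cap(\text{something})$ is cut out inside $\tdp$ by the further symmetry $I_\nik$ — equivalently $\bD(\ten)$ is a linear section of $\bD(\tdp)$ of strictly smaller dimension — the locus of such $g$ is a union of finitely many arithmetic sub-loci, and after passing to the quotient the fiber is finite. (Alternatively: a very general period point in $\bD(\ten)$ has Picard lattice exactly $S_\enr$ on the K3 side, which strictly contains $S_\dP$; the extra classes rigidify the del Pezzo K3 so that only finitely many isomorphisms of polarized del Pezzo K3s are compatible with the Enriques structure.) Thus $j$ is quasi-finite, hence generically injective on a suitable open set or at worst finite-to-one; in either case the image $j(\fen)$ is a locally closed subvariety of $\fhyp$ of dimension $\dim\fen=10$.

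It then remains to identify the closure. Because $\fen$ and $\fhyp$ are Baily--Borel compactifications of the same flavor, $j$ extends to $\ofen\ubb\to\ofhyp\ubb$, and $\fen$ — being the preimage of $\fhyp$ under this extension minus boundary — is in fact closed inside $\fhyp$ as soon as one checks no divisorial component of $j(\fen)$ escapes to the boundary, which follows since the $0$- and $1$-cusps of $\fen$ map to cusps of $\fhyp$ (the cusp combinatorics is precisely what Section~\ref{sec:cusps} records). Finally, $\fen=\bD(\ten)/\gen$ is a normal variety (quotient of a smooth variety by a discrete group), and a finite, generically injective (or merely finite) morphism from a normal variety onto a subvariety $V\subset\fhyp$ exhibits the source as $\widetilde V$, the normalization of $V$, by the universal property of normalization. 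This gives the statement.

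\textbf{Main obstacle.} The delicate point is the quasi-finiteness and, relatedly, pinning down that the image is genuinely \emph{closed} rather than merely locally closed: one must rule out that a boundary stratum of $\fen$ maps into the interior $\fhyp$. I expect this to require the explicit description of the arithmetic groups $\gen=\Gamma_\enr\cap\Gamma_\dP$ and a comparison of the two Baily--Borel boundary stratifications — precisely the cusp analysis carried out later — rather than a soft argument. Everything else (equivariance of $\bD(\ten)\subset\bD(\tdp)$, normality of $\fen$, the universal property of normalization) is routine.
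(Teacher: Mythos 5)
Your overall strategy --- realize $\fen\to\fhyp$ via the equivariant inclusion $\bD(\ten)\subset\bD(\tdp)$, note the image is closed, and invoke normality of $\fen$ plus the universal property of normalization --- is the same as the paper's. But there is a genuine gap at the decisive step. To conclude that $\fen$ is the normalization of its image you need the map to be finite and \emph{birational} onto the image, i.e.\ generically injective; your hedge ``generically injective \emph{(or merely finite)}'' does not suffice, since a finite morphism from a normal variety (e.g.\ $t\mapsto t^2$ on $\bC$) need not be a normalization. Quasi-finiteness does not imply generic injectivity, and both of your sketches only aim at finiteness of fibers. Generic injectivity here is the precise group-theoretic assertion that the stabilizer of $\bD(\ten)$, equivalently of $\ten$, inside $\Gamma_\dP=O(\tdp)$ is exactly $\gen$. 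This is where the real work lies, and it is nontrivial because $\gen$ is defined via isometries of the full K3 lattice $L$ commuting with $I_\enr$ and fixing $h$, whereas $\Gamma_\dP$ is defined directly as $O(\tdp)$: one must show that \emph{every} element of $O(\tdp)$ is induced by an isometry of $L$ fixing $h$ (so that, if it also preserves $\ten$, it commutes with $I_\enr$ and lands in $\gen$). The paper does this with Nikulin's discriminant-form machinery: $O(T_\dP^*/T_\dP)\simeq O(S_\dP^*/S_\dP)$ has order $2$ and preserves $\tfrac12 h\in S_\dP^*/S_\dP\simeq\bZ_2^2$, so every such element extends to an automorphism of $S_\dP$ fixing $h$ and then glues to an isometry of $L$. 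Your proposal never touches the discriminant groups, so this step is missing rather than merely unwritten.

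Two smaller points. Your stated ``main obstacle'' --- that a boundary stratum of $\fen$ might map into the interior of $\fhyp$ --- is moot: the lemma concerns the open period spaces, which have no boundary strata, and closedness of the image needs no cusp analysis; it follows from the standard fact that $\bigcup_{g\in\Gamma_\dP} g\cdot\bD(\ten)$ is a locally finite union of closed subdomains of $\bD(\tdp)$, hence a closed $\Gamma_\dP$-invariant analytic subset whose image in the quotient is a closed subvariety. The genuinely delicate point is the one identified above, not the boundary behaviour.
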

\begin{proof}
  One has $\bD(\ten) \subset \bD(\tdp)$.
  The isometry group $O(\tdp)$ coincides with the image of the group
  \begin{displaymath}
    \{g\in O(L) \mid g(\tdp)=\tdp, \ g(h)=h \}.
  \end{displaymath}
Indeed, any element of $O(T_\dP^*/T_\dP) \simeq O(S_\dP^*/S_\dP)$ can be extended
to an automorphism of $S_\dP$ that fixes $h$ because this group of
order $2$ preserves $\frac12 h\in S_\dP^*/S_\dP \simeq U(2)^*/U(2)\simeq
\bZ_2^2$.
Thus, the stabilizer of $\ten$ in $\Gamma_\dP$ is $\gen$ and so the
stabilizer of $\bD(\ten)$ in $\bD(\tdp)$ is~$\gen$. Thus the finite map
$\bD(\ten)/\gen \to \bD(\tdp)/\Gamma_\dP$ is generically injective. 
\end{proof}

Since $\gen\subset \Gamma_\enr$ is a finite index subgroup, there is also an obvious morphism
$\fen\to\fken$. It has degree $2^7\cdot 17\cdot 31$, see
\cite[Rem.~2.12]{sterk1991compactifications-enriques1}. 

\begin{definition} For a type IV arithmetic quotient $F=\bD(\Lambda)/\Gamma$, denote by $\oF\ubb$ 
its Baily-Borel compactification \cite{baily1966compactification-of-arithmetic}. \end{definition}

The boundary components of $\oF\ubb$ are points and modular curves, corresponding respectively
to primitive isotropic lines and planes in $\Lambda$. We call these boundary components $0$-cusps
and $1$-cusps respectively.




\subsection{KSBA stable pairs and their moduli spaces}
\label{sec:ksba-recall}

The idea behind KSBA spaces is very simple: they are a close
generalization of Deligne-Mumford-Knudsen's moduli spaces $\oM_{g,n}$
of pointed stable curves. For a one-parameter degeneration of of K3 surfaces
with a distinguished ample divisor, there are infinitely many Kulikov models that 
differ by flops, but there is a canonical KSBA-stable limit. 

In brief, a KSBA stable pair $(X,B=\sum_{i=1}^n b_iB_i)$ consists of a
projective variety $X$ which is deminormal: seminormal with only
double crossings in codimension~$1$, $B_i$ are effective Weil divisors
not containing any components of the double locus of~$X$, $0<b_i\le1$
are rational numbers, all satisfying two conditions:
\begin{enumerate}
\item (on singularities) the pair $(X,B)$ has semi log canonical (slc)
  singularities, the generalization of the log canonical singularities
  appearing in the MMP to the nonnormal case, and
\item (numerical) the divisor $K_X+B$ is an ample $\bQ$-Cartier divisor.
\end{enumerate}
The main result is that in characteristic zero for the fixed dimension
$d=\dim X$, number $n$, coefficient vector $(b_1,\dotsc, b_n)$ and degree
$(K_X+B)^d$ there is a (carefully defined) moduli functor for families
of KSBA stable pairs, the moduli stack is Deligne-Mumford, and the
coarse moduli space is projective.  We refer the reader to
\cite{kollar2023families-of-varieties} for complete details.

We  need  a version  of  this  definition  when $K_X$  is  numerically
trivial,  $R$   is  an   ample  Cartier  divisor   and  the   pair  is
$(X,\epsilon  R)$  with  $0<\epsilon\ll1$ allowed  to  be  arbitrarily
small. By \cite{kollar2019moduli-of-polarized,birkar2023geometry-polarized} in
any dimension  $d$ for fixed degree $R^d$ there exists $\epsilon_0>0$ such
that the moduli space for any $0<\epsilon<\epsilon_0$ is the
same. We only need this result for K3 surfaces, in which case the
construction and the proof were given in \cite{alexeev2023stable-pair} and
\cite{alexeev2022compactifications-moduli}. The Enriques case then
immediately follows.

In \cite{alexeev2023stable-pair, alexeev2022mirror-symmetric} this
general construction was applied to describe a geometric
compactification for the pairs $(X,\epsilon R)$ where $X$ is a K3
surface with
a non-symplectic
involution and 
ADE singularities, and $R$ is a connected component of
genus $g\ge2$ of the ramification divisor for the induced double
cover. 

In this paper we apply it to the pairs $(Z,\epsilon R_Z)$, where $Z$
is an Enriques surface with ADE singularities and with a numerical
degree~$2$ polarization, an $S_2$-quotient of a K3 surface
$X\in F_{(2,2,0)}$ with ADE singularities, and $R_Z$ is the
ramification divisor of the induced involution $\baridp$ as in the
introduction. For the KSBA-stable limits, $R_Z$ will be the divisorial
part of the ramification divisor of $Z\to W=Z/\baridp$ that is not contained
in the double locus of $Z$. 

\begin{definition}\label{def:ksba-comp} The compactification $\fen\hookrightarrow \ofen$ 
is the closure of the space of pairs $\{(Z,\epsilon R_Z)\,\big{|}\,Z\in \fen\}$ 
in the moduli space of KSBA stable pairs.
 \end{definition} 

 Our main goal is to describe the normalization of $\ofen$ and the
 surfaces appearing on the boundary.

\section{Cusps and Coxeter diagrams}
\label{sec:cusps}

\subsection{Cusp diagram of $\fhyp$}
\label{sec:cusps-Fhyp}

Figure~\ref{fig-k3-allcusps} reproduces the cusp diagram of $\ofhyp\ubb$ 
given in the last section
of \cite{alexeev2022mirror-symmetric}.
An equivalent diagram is found in \cite{laza2021git-versus}. 

\begin{figure}[htbp]
  \includegraphics[width=330pt]{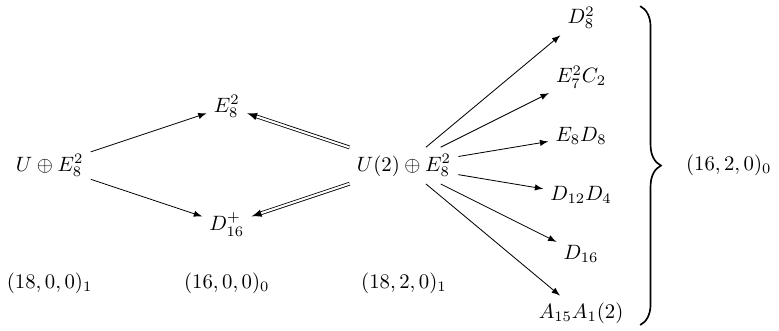}
  \smallskip
  \caption{Cusp diagram of $\fhyp$, for $\tdp=U\oplus U(2)\oplus E_8^2$}
  \label{fig-k3-allcusps}
\end{figure}

There are two $0$-cusps which are in bijection with the primitive isotropic
lines $\bZ e\in \tdp \mod {\Gamma_\dP}$, distinguished by the divisibility
$\di(e)\in \{1,2\}$ of $e$ in the dual lattice $T_\dP^*$. 
For a primitive vector
in a $2$-elementary lattice one must have $\di(e)\in \{1,2\}$.

The lattices $e^\perp/e$ are hyperbolic and $2$-elementary, and here are of the form $U\oplus E_8^2 = (18,0,0)_1$ and $U(2)\oplus E_8^2 = (18,2,0)_1$ depending on whether $\di(e)=2$ or~$1$ respectively.
Similarly, the eight $1$-cusps are in
bijection with the primitive isotropic planes $\Pi\subset \tdp
\mod{\Gamma_\dP}$. For each of them there is a negative-definite
lattice $\Pi^\perp/\Pi$ which is $2$-elementary but is no longer
uniquely determined by the triple $(r,a,\delta)$. The label
denotes the root sublattice of $\Pi^\perp/\Pi$.
Here, $D_{16}$ is a root lattice with determinant $4$ and $D_{16}^+$
is its unique even unimodular
extension.

The bipartite diagram in Figure \ref{fig-k3-allcusps} depicts all $0$- and $1$-cusps
added to compactify $F_{(2,2,0)}$. An arrow indicates that a $0$-cusp
lies in the closure of a $1$-cusp. Equivalently, there is, up to the group action,
an inclusion $\bZ e\subset \Pi$ of the corresponding isotropic subspaces.

\subsection{Cusp diagram of $\fken$}
\label{sec:cusps-Fken}

The cusp diagram for $\ofken\ubb$ is
well known. It can also be easily found by 
\cite[Sec. 5]{alexeev2022mirror-symmetric}.  We give it in
Figure~\ref{fig-en-allcusps}, keeping the same notation as
above. There are two $0$-cusps distinguished by the divisibility $\di(e)=1$
or $2$.

\begin{figure}[htbp]
  \includegraphics[width=280pt]{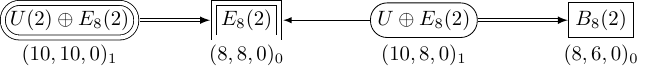} 
  \smallskip
  \caption{Cusp diagram of $\fken$, for $\ten=U\oplus U(2)\oplus E_8(2)$}
  \label{fig-en-allcusps}
\end{figure}

A geometric interpretation of these cusps is as follows. Let
$\cX\to (C,0)$ be a Kulikov model and consider the completed
period mapping $$(C,0)\to\ofken\ubb.$$
Suppose that $\iota_{\enr, t}$ on the generic fiber extends to an involution
$\iota_{\enr, 0}$ on the central fiber. If $0\in C$ is sent to the double-circled cusp
$(10,10,0)_1$ or $(8,8,0)_0$ then $\iota_{\enr, 0}$ is basepoint
free. Otherwise, $\iota_{\enr, 0}$ has a nonempty finite set of fixed
points.

Furthermore, the dual complex $\Gamma(\cX_0)$ is a $2$-sphere
and the induced action of $\iota_{\enr, 0}$ on $\Gamma(\cX_0)$
in the $(10,10,0)_1$ case is an antipodal involution, while in the 
$(10,8,0)_1$ case it is a hemispherical involution, see 
\cite[Sec.~8F]{alexeev2022mirror-symmetric}. So the quotients
of $\Gamma(\cX_0)$ by the Enriques involution are, respectively,
the real projective plane $\bR\bP^2$ and a disk $\bD^2$.
In Type II, $\Gamma(\cX_0)$ is a segment.
In the $(8,8,0)_0$ case, the action of $\iota_{\enr, 0}$ flips
the segment, whereas in the $(8,6,0)_0$ case it fixes
the segment.


\subsection{Cusp diagram of $\fen$}
\label{sec:cusps-Fent}

Sterk \cite{sterk1991compactifications-enriques1} computed the cusp
diagram for $\ofen\ubb$. There are
five $0$-cusps for which we use Sterk's numbering $1,2,3,4,5$.  There
are also $9$ distinct $1$-cusps.

\begin{notation}\label{1-cusp-notation}
We denote a $1$-cusp by $i_1\dots i_k$ if its closure contains the $0$-cusps
$i_1,\dotsc,i_k$. Here, $1\leq k\leq 5$.
\end{notation}

\begin{lemma}\label{lem:bb-extensions}
  The morphisms $\fen \to \fhyp$ and $\fen\to \fken$ extend to the
  Baily-Borel compactifications, mapping $0$-cusps to $0$-cusps and
  $1$-cusps to $1$-cusps in the manner shown in Figure~\ref{fig-sterk-allcusps}.
\end{lemma}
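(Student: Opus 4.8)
The plan is to exploit the lattice-theoretic descriptions already in place: $0$-cusps correspond to primitive isotropic lines, $1$-cusps to primitive isotropic planes, all modulo the relevant arithmetic group. First I would fix the three period domains $\bD(\ten) \subset \bD(\tdp)$ and the groups $\gen = \Gamma_\enr \cap \Gamma_\dP \subset \Gamma_\dP$ together with the finite-index inclusion $\gen \subset \Gamma_\enr$. Since the period domain inclusions are equivariant for these group inclusions, the morphisms $\fen \to \fhyp$ and $\fen \to \fken$ are induced by the inclusion of domains, and their extension to the Baily-Borel compactifications is automatic: $\oF\ubb$ is functorial for such finite maps of arithmetic quotients (a boundary component of $\bD(\ten)$, being the rational closure of an isotropic subspace of $\ten$, is carried into a boundary component of $\bD(\tdp)$, since an isotropic subspace of $\ten$ is in particular an isotropic subspace of $\tdp$; similarly for $\fken$, where the domain is the same and only the group changes). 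So the content is entirely in identifying \emph{which} cusp goes to which, i.e.\ computing the images on the combinatorial level.

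Next I would enumerate, $\Gamma_\dP$- and $\Gamma_\enr$-orbit by orbit, the isotropic lines and planes. For each of Sterk's five $0$-cusps of $\fen$, represented by a primitive isotropic line $\bZ e \subset \ten$, I would compute: (i) its divisibility $\di(e)$ in $\ten^*$, which already distinguishes the two $0$-cusps of $\fken$ and, after viewing $e \in \tdp$, the two $0$-cusps of $\fhyp$ (here one must be careful: $\di(e)$ may change under the inclusion $\ten \hookrightarrow \tdp$, and this change is exactly what needs to be tracked); and (ii) the lattice $e^\perp/e$ with its $(r,a,\delta)$ invariants and root sublattice, which pins down the target cusp via the labels in Figures~\ref{fig-k3-allcusps} and~\ref{fig-en-allcusps}. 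The same is done for the nine $1$-cusps, represented by primitive isotropic planes $\Pi \subset \ten$, using the negative-definite lattice $\Pi^\perp/\Pi$ and its root sublattice. Compatibility with the incidences already recorded (a $0$-cusp in the closure of a $1$-cusp corresponds to a flag $\bZ e \subset \Pi$) gives cross-checks that rigidify the identifications, and consistency with Sterk's own cusp diagram in \cite{sterk1991compactifications-enriques1} provides a final check.

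The main obstacle will be the bookkeeping of divisibilities and discriminant forms under the embedding $\ten = U \oplus U(2) \oplus E_8(2) \hookrightarrow \tdp = U \oplus U(2) \oplus E_8^2$, since the $E_8(2)$ summand is where the two lattices differ and an isotropic vector supported partly in that summand can behave subtly: its orthogonal complement, its divisibility, and the resulting $2$-elementary invariants must all be recomputed in $\tdp$, and for the $1$-cusps one must also determine the \emph{root sublattice} of $\Pi^\perp/\Pi$, which (unlike the full genus) is not determined by $(r,a,\delta)$ alone. I expect to handle this by choosing explicit isotropic vectors and planes adapted to the block decomposition (using the $\Delta$/$\Delta^-$ structure from Definition~\ref{def:involutions}), reducing each computation to a small unimodular or $2$-elementary piece, and invoking Nikulin's uniqueness results \cite{nikulin1979integer-symmetric} together with the surjectivity statements $O(L) \twoheadrightarrow O(\ten)$, $O(L) \twoheadrightarrow O(\tdp)$ to know that orbit representatives are unambiguous. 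Once every cusp of $\fen$ has its image computed and the incidences match, the arrows in Figure~\ref{fig-sterk-allcusps} are exactly the resulting assignment, completing the proof.
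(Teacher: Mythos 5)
Your proposal is correct and, for the extension property and the $0$-cusps, coincides with the paper's argument: the paper cites \cite{kiernan1972satake-compactification} for the extension (your ``functoriality'' remark is the right idea, though it does deserve that citation rather than being called automatic), and it likewise reads off the images of the $0$-cusps from the divisibilities of Sterk's isotropic vectors $e_1,\dotsc,e_5$ viewed separately in $\ten$ and in $\tdp$ --- exactly the bookkeeping you describe, including the caveat that $\di(e)$ can change under $\ten\hookrightarrow\tdp$. Where you diverge is the $1$-cusps: you propose to compute $\Pi^\perp/\Pi$ and its root sublattice directly in $\tdp$ for each of the nine isotropic planes, using incidences only as a cross-check, whereas the paper inverts this and determines the images of the $1$-cusps purely from (a) the incidence relations between $0$- and $1$-cusps already established and (b) the images of the $1$-cusps in the larger Baily-Borel compactification $\oF_4\ubb\supset\ofhyp\ubb$, which Sterk had already computed in \cite{sterk1991compactifications-enriques1}. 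The paper's route avoids the delicate point you correctly flag --- that the root sublattice of $\Pi^\perp/\Pi$ is not determined by the $2$-elementary invariants $(r,a,\delta)$ alone --- at the cost of relying on Sterk's earlier computation; your route is more self-contained but requires choosing explicit plane representatives adapted to the block decomposition and verifying the root-lattice identifications case by case. Both are sound; if you carry out the direct computation you should still record the incidence check, since it is what rigidifies the assignment when two target $1$-cusps of $\fhyp$ share invariants.
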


\begin{figure}[htbp]
  \includegraphics[width=300pt]{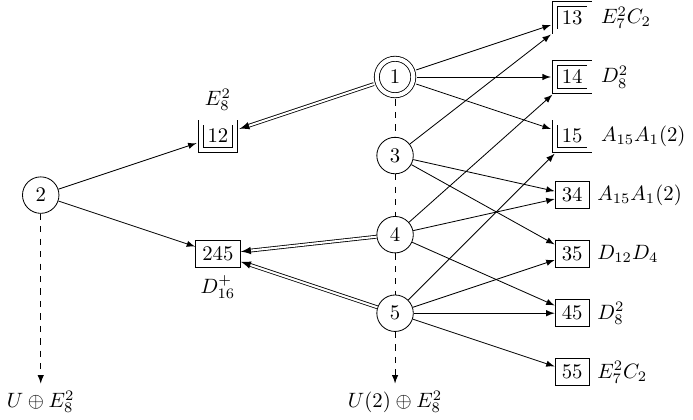} 
  \smallskip
  \caption{Cusps of $\ofen\ubb$ with images in $\ofken\ubb$ and
    $\ofhyp\ubb$}
  \label{fig-sterk-allcusps}
\end{figure}

The images in $\ofhyp\ubb$ are shown by labels
from Figure \ref{fig-sterk-allcusps}, and in
$\ofken\ubb$ by the corresponding border shapes (oval, double oval, rectangle,
double rectangle).

\begin{proof}
  The extension property holds by
  \cite{kiernan1972satake-compactification}. The maps on $0$-cusps
  are easy to see by looking at the divisibilities of Sterk's isotropic
  vectors $e_1,\dotsc, e_5$ considered separately as vectors in the
  lattices $\ten$ and $\tdp$.  The maps on $1$-cusps are then recovered
  by considering incidences between $0$- and $1$-cusps and the images
  of the $1$-cusps in the Baily-Borel compactification $\oF_4\ubb \supset \ofhyp\ubb$ of
  the moduli space of K3 surfaces of degree~$4$, computed at the end of
  \cite{sterk1991compactifications-enriques1}.
\end{proof}

\subsection{Vinberg's theory and Coxeter diagrams}
\label{sec:vinberg}

We refer to \cite{vinberg1973some-arithmetic,
  vinberg1972-groups-of-units} for Vinberg's theory of reflection
groups of hyperbolic lattices, saying just enough to fix the
notations. 

Let $\Lambda$ be a hyperbolic lattice. Let $\cC$ be the component of the set
$\{ v\in \Lambda_\bR \mid v^2>0\}$, containing a fixed class $h$ with
$h^2>0$. Let $\cH=\bP\cC$ be the corresponding hyperbolic space.  A
vector $v\ne0$ with $v^2=0$ in the closure of $\cC$ defines a point on the
sphere at infinity of $\cH$. Let $\ocC$ denote the closure of $\cC$.

A \emph{reflection} in a vector
$\alpha\in \Lambda$ is the isometry
$$w_\alpha(v) = v - \frac{2(\alpha\cdot v)}{\alpha\cdot \alpha} \alpha.$$  A
\emph{root} is a vector $\alpha$ with $\alpha^2<0$ such that
$w_\alpha(\Lambda)=\Lambda$, equivalently such that
$2\di(\alpha)\in(\alpha\cdot \alpha) \bZ$.
For a group of isometries $\Gamma\subset O(\Lambda)$ 
we denote by $W(\Gamma)$ its subgroup generated by reflections. 

We denote by $\ch$ the fundamental chamber for
$W(\Gamma)$. Equivalently, one can treat it as the (possibly infinite)
polyhedron $P=\bP\ch \subset \cH$. One has
\begin{eqnarray}\label{eq:chamber}
  &\ch = \left\{ v\in\cC \mid
    \alpha_i\cdot  v \ge 0 
    \text{ for simple roots } \alpha_i \right\}\\
  &O(\Lambda) = S\ltimes W(\Gamma) 
    \quad\text{for some } S\subset\Sym(P).
\end{eqnarray}
The fundamental chamber is encoded in a Coxeter diagram.
The vertices correspond to the simple roots $\alpha_i$ and the
edges show the angles between them as follows. Let
$g_{ij} = (\alpha_i\cdot \alpha_j) / \sqrt{(\alpha_i\cdot \alpha_i)
  (\alpha_j\cdot \alpha_j)}$. One connects $i$ and $j$ by
\begin{itemize}
\item an $m$-tuple line if $g_{ij} = \cos\frac{\pi}{m+2}$. The
  hyperplanes $\alpha_i^\perp$, $\alpha_j^\perp$ intersect in $\cH$.
\item a thick line if $g_{ij}=1$. $\alpha_i^\perp$,
  $\alpha_j^\perp$ are parallel, meet at an infinite point of $\cH$.
\item a dotted line if $g_{ij} > 1$. $\alpha_i^\perp$,
  $\alpha_j^\perp$ do not meet in $\cH$ or its closure.
\end{itemize}

The lattices $L_\enr^\pm$ and $L_\dP^\pm$ are even $2$-elementary. For such
lattices the roots are the $(-2)$-vectors and the $(-4)$-vectors with
$\di(\alpha)=2$.  We denote the roots with $\alpha^2=-2$ by white
vertices and those with $\alpha^2=-4$ by black vertices.

\subsection{Coxeter diagrams for the $0$-cusps of $\fhyp$}
\label{sec:coxeter-Fhyp} 

The Coxeter diagrams for the lattices $(18,0,0)_1 = U\oplus E_8^2$ and
$(18,2,0)_1 = U(2)\oplus E_8^2$,
cf.~\cite{alexeev2022mirror-symmetric}, are shown in
Figure~\ref{fig-k3-cusps}. To describe Kulikov models and KSBA stable
models, it is important to keep track of the even and odd nodes on the
boundaries of these diagrams. We assign even numbers to the even nodes;
in Figure~~\ref{fig-k3-cusps} they are shown as double-circled
nodes. For typographical reasons, in the diagrams that follow we skip
these double circles. The corners are always even.
%

\begin{figure}
  \centering
  \includegraphics[width=280pt]{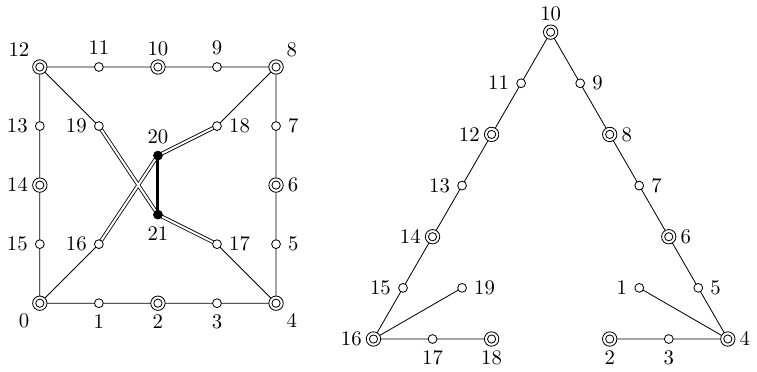} 
  \caption{Coxeter diagrams for $(18,2,0)_1$ and $(18,0,0)_1$}
  \label{fig-k3-cusps}
\end{figure}

The lattice $U\oplus E_8^2$ is generated by $19$ roots
$\alpha_1,\dotsc,\alpha_{19}$ with a single relation
\begin{align}\label{close-up}\begin{aligned}
  v &=3\alpha_1 + 2\alpha_2 + 4\alpha_3 + 6\alpha_4 + 5\alpha_5 +
        4\alpha_6 + 3\alpha_7 + 2\alpha_8 + \alpha_9  \\
  &=
  3\alpha_{19} + 2\alpha_{18} + 4\alpha_{17} + 6\alpha_{16} + 5\alpha_{15} +
  4\alpha_{14} + 3\alpha_{13} + 2\alpha_{12} + \alpha_{11} 
\end{aligned}\end{align}

The lattice $U(2)\oplus E_8^2$ is generated by $22$ roots
$\alpha_0,\dotsc,\alpha_{21}$. The relations come from maximal
parabolic subdiagrams with more than one connected component. Maximal
parabolic subdiagrams correspond to parabolic sublattices with a
unique isotropic line; the generator of this line is a linear
combination of roots in each connected component, which gives a linear
relation. For example, the
following relation results from $\wE_7^2\wC_2$:
\begin{equation}\label{eq:UE82}
  \alpha_{16}+\alpha_{20}+\alpha_{18} =
  \alpha_1 + 2\alpha_2 + 3\alpha_3 + 4\alpha_4
  +3\alpha_5 + 2\alpha_6 + \alpha_7 + 2\alpha_{17}.
\end{equation}

\subsection{Coxeter diagrams for the $0$-cusps of $\fken$}
\label{sec:coxeter-Fen}

The Coxeter diagrams for the lattices
$(10,10,0)_1 = U(2)\oplus E_8(2)$ and $(10,8,0)_1 = U\oplus E_8(2)$
are well-known.  They are shown in Figure~\ref{fig-enriques0}.

\begin{figure}
  \centering
  \includegraphics[width=330pt]{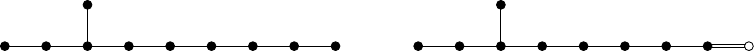}
  \caption{Coxeter diagrams for $(10,10,0)_1$ and $(10,8,0)_1$}
  \label{fig-enriques0}
\end{figure}

\subsection{Folding Coxeter diagrams by involutions}
\label{sec:folding}

\begin{definition}
  Let $\Lambda$ be a lattice with an involution and let
  $\alpha\in\Lambda$ be a vector. We call the following vector
  $\alpha_I\in\Lambda^{I=1}$ a \emph{folded} vector:
  \begin{displaymath}
    \alpha_I = 
    \begin{cases}
      \alpha &\text{if } I(\alpha)=\alpha\\
      \alpha + I(\alpha) &\text{if } I(\alpha)\ne\alpha\\
    \end{cases}
  \end{displaymath}
\end{definition}

\begin{lemma}\label{lem:folded-roots}
  Consider the lattice $\tdp$ with the involution
  $ I := -I_\enr=I_\dP\circ I_\enr = I_\nik$, so that
  $\tdp^{I=1} = \ten$.  Let $\alpha$ be a root of $\tdp$ and assume
  that $\alpha_I^2<0$.  Then $\alpha_I$ is a root in $\ten$ and one
  of the following holds:
  \begin{enumerate}
  \item $\alpha^2=-2$, $\alpha\in\ten$, so $\alpha=\alpha_I$ 
    is a root of both $\tdp$ and $\ten$.
  \item $\alpha^2=-4$, $\alpha\in\ten$, so $\alpha=\alpha_I$ 
    is a root of both $\tdp$ and $\ten$.
  \item $\alpha^2=-2$, $\alpha\cdot I(\alpha)=0$, $\alpha_I^2=-4$,
    and $\alpha_I$ is a root of $\ten$ but not of $\tdp$.
  \end{enumerate}
  Vice versa, all roots of $\ten$ are of these three types.
\end{lemma}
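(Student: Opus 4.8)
The plan is to analyze the possible behaviors of a root $\alpha$ of $\tdp$ under the involution $I = I_\nik$, splitting into the cases $I(\alpha) = \alpha$ and $I(\alpha) \neq \alpha$, and then to verify the ``vice versa'' direction by exhibiting every root of $\ten$ as a folded root.

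\begin{proof}[Proof sketch]
First I would recall that, since $\tdp$ and $\ten$ are both even $2$-elementary lattices, their roots are exactly the $(-2)$-vectors together with the $(-4)$-vectors $\alpha$ with $\di(\alpha) = 2$ (as stated in Section~\ref{sec:vinberg}). Now let $\alpha$ be a root of $\tdp$.

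\textbf{Case $I(\alpha) = \alpha$.} Then $\alpha \in \tdp^{I=1} = \ten$ and $\alpha_I = \alpha$, so $\alpha_I^2 = \alpha^2 \in \{-2, -4\}$. If $\alpha^2 = -2$ we are in case (1); $\alpha$ is automatically a root of $\ten$ since it is a $(-2)$-vector. If $\alpha^2 = -4$ with $\di_{\tdp}(\alpha) = 2$, I would need to check that $\alpha$ is still a root of $\ten$, i.e.\ that $\di_{\ten}(\alpha) = 2$; this follows because $\di$ can only decrease under passing to a sublattice is false in general, so instead I would use that $\alpha/2 \notin \ten^*$ would force $\alpha/2 \notin \tdp^*$ only if the embedding $\ten \hookrightarrow \tdp$ is primitive with suitable discriminant behavior — more carefully, $\di_{\ten}(\alpha)$ divides $\di_{\tdp}(\alpha) = 2$, and $\di_{\ten}(\alpha) = 1$ would make $\alpha$ a non-root in $\ten$, but one checks using the explicit block decomposition $\ten = U \oplus U(2) \oplus E_8(2)$ that $(-4)$-vectors fixed by $I$ and with $\di_{\tdp} = 2$ have $\di_{\ten} = 2$ as well. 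This gives case (2).

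\textbf{Case $I(\alpha) \neq \alpha$.} Here $I$ acts on $\tdp = U \oplus \Delta^-(U) \oplus E_8^2$ by $-I_\enr$, i.e.\ it swaps the two $E_8$ summands (and negates on the $\Delta^-(U)$ part as needed); concretely $\alpha$ has a nonzero ``off-diagonal'' component. Write $\beta = I(\alpha)$. Then $\alpha_I = \alpha + \beta$, and $\alpha_I^2 = 2\alpha^2 + 2(\alpha \cdot \beta)$. Since $\alpha^2 \in \{-2,-4\}$ and $\alpha \cdot \beta = \alpha \cdot I(\alpha) \in \bZ$, and by hypothesis $\alpha_I^2 < 0$, the only way to land in the root range of $\ten$ — recalling $\ten \subset \tdp$ so that $\alpha_I^2 \in \{-2, -4\}$ would be needed, but $\alpha_I^2 = 2(\alpha^2 + \alpha\cdot\beta)$ is always even and $\equiv 0 \bmod 2$, and moreover $\alpha_I \in \ten = U \oplus U(2) \oplus E_8(2)$ forces $\alpha_I^2 \in 2\bZ$ with the $E_8(2)$ and $U(2)$ parts contributing multiples of $4$ — is to have $\alpha^2 = -2$ and $\alpha \cdot I(\alpha) = 0$, giving $\alpha_I^2 = -4$. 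I would then argue $\di_{\ten}(\alpha_I) = 2$ (so $\alpha_I$ is a root of $\ten$): pairing $\alpha_I = \alpha + I(\alpha)$ against any $v \in \ten$, which satisfies $I(v) = v$, gives $\alpha_I \cdot v = 2(\alpha \cdot v) \in 2\bZ$, hence $\alpha_I/2 \in \ten^*$ and $\di_{\ten}(\alpha_I) = 2$. Finally $\alpha_I$ is not a root of $\tdp$: it has square $-4$ but $\di_{\tdp}(\alpha_I)$ could be $1$ — I'd verify that $\alpha_I$ is $2$-divisible in $\ten^*$ but in $\tdp^*$ one has $\alpha_I/2 = (\alpha + I(\alpha))/2$, and pairing with a vector $w$ in the anti-invariant part $\tdp^{I=-1}$ (which exists, e.g.\ in $\Delta^-(E_8)$) gives $\alpha_I \cdot w = (\alpha - I(\alpha))\cdot \alpha$-type term that is generically odd, so $\di_{\tdp}(\alpha_I) = 1$ and $\alpha_I$ is not a root of $\tdp$. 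This is case (3).

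\textbf{Vice versa.} For the converse, let $\gamma$ be any root of $\ten$. If $\gamma^2 = -2$, then $\gamma \in \tdp$ is also a $(-2)$-vector, hence a root of $\tdp$ fixed by $I$, and $\gamma = \gamma_I$ puts us in case (1). If $\gamma^2 = -4$ with $\di_{\ten}(\gamma) = 2$: either $\di_{\tdp}(\gamma) = 2$ as well, so $\gamma$ is a root of $\tdp$ fixed by $I$ (case (2)); or $\di_{\tdp}(\gamma) = 1$, and then I claim $\gamma = \alpha + I(\alpha)$ for some $(-2)$-root $\alpha$ of $\tdp$ with $\alpha \cdot I(\alpha) = 0$. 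To produce such an $\alpha$, I would use the explicit lattice description: a $(-4)$-vector of $\ten = U \oplus U(2) \oplus E_8(2)$ with $\di_{\ten} = 2$ but which is primitive and $2$-divisible in $\ten^*$ but not $\tdp^*$ must have its ``interesting'' part in the $U(2)$ or $E_8(2)$ summand, which embeds into $U^2$ resp.\ $E_8^2$ as a rescaled diagonal $\Delta$; a generator of $\Delta$ of the form $(x,x)$ with $x^2 = -2$ in $U$ resp.\ $E_8$ is exactly $\alpha + I(\alpha)$ for $\alpha = (x,0)$, which has $\alpha^2 = -2$ and $\alpha \cdot I(\alpha) = (x,0)\cdot(0,x) = 0$.

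The main obstacle I anticipate is the careful bookkeeping of divisibilities $\di_{\tdp}$ versus $\di_{\ten}$ in cases (2) and (3) and in the converse, since the embedding $\ten \hookrightarrow \tdp$ rescales certain summands by $2$; this is where one must genuinely use the block-diagonal descriptions $\tdp = U \oplus \Delta^-(U) \oplus E_8^2$ and $\ten = U \oplus \Delta^-(U) \oplus \Delta^-(E_8) \cong U \oplus U(2) \oplus E_8(2)$ from Definition~\ref{def:involutions}, rather than formal lattice-theoretic generalities. Everything else is a short computation with squares of sums $\alpha_I^2 = 2\alpha^2 + 2\alpha\cdot I(\alpha)$ combined with the root criterion for $2$-elementary lattices.
\end{proof}
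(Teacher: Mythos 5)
Your overall strategy (split on $I(\alpha)=\alpha$ versus $I(\alpha)\ne\alpha$, compute $\alpha_I^2=2\alpha^2+2\alpha\cdot I(\alpha)$, then track divisibilities) is the same as the paper's, but the decisive step in the case $I(\alpha)\ne\alpha$ is missing. Writing $\alpha=(v,u,-u,e,e')$ in the block form, one has $I(\alpha)=\alpha-(0,0,0,e+e',e+e')$, hence $\alpha\cdot I(\alpha)=\alpha^2-(e+e')^2$ with $e+e'$ a \emph{nonzero} vector of the negative-definite even lattice $E_8$. This gives $\alpha\cdot I(\alpha)\ge \alpha^2+2$ and $\alpha\cdot I(\alpha)$ even, which is what actually forces $\alpha^2=-2$, $\alpha\cdot I(\alpha)=0$, $\alpha_I^2=-4$. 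Your parity observations alone (that $\alpha_I^2\in 4\bZ$) do not rule out, say, $\alpha^2=-2$ with $\alpha\cdot I(\alpha)=-2$ and $\alpha_I^2=-8<0$, in which case $\alpha_I$ would fail to be a root; nor do they eliminate $(-4)$-roots with $I(\alpha)\ne\alpha$, which the paper kills by noting that $\di_{\tdp}(\alpha)=2$ forces $e,e'\in 2E_8$, hence $(e+e')^2\le -8$ and $\alpha_I^2\ge 0$. Likewise your claim that $\di_{\tdp}(\alpha_I)=1$ rests on ``generically odd,'' which is not an argument for the specific $\alpha$ at hand; the paper shows $\di_{\tdp}(\alpha_I)=2$ would force $e-e'\in 2E_8$, whence $(e+e')^2\equiv 0\pmod 4$ and again $\alpha_I^2\ge 0$. (Minor point: in case (2) your divisibility relation is backwards --- since $\ten\subset\tdp$ one has $\di_{\tdp}(\alpha)\mid\di_{\ten}(\alpha)$, which is exactly what gives $\di_{\ten}(\alpha)=2$ for free, no block computation needed.)

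For the converse your route also diverges from the paper's and does not go through. A general $(-4)$-root of $\ten=U\oplus U(2)\oplus E_8(2)$ has components in all three summands (e.g.\ $(0,u,e)$ with $u\in U(2)$ isotropic primitive and $e^2=-4$ in $E_8(2)$), so it is not of the pure-diagonal shape $(x,x)$ that your construction produces, and exhibiting it as $\alpha+I(\alpha)$ for a $(-2)$-root $\alpha$ of $\tdp$ with $\alpha\cdot I(\alpha)=0$ is not possible by inspection of a single summand. The paper sidesteps this entirely by citing Namikawa's classification: up to $\Gamma_\enr=O(\ten)$ there is a single orbit of $(-2)$-vectors and two orbits of $(-4)$-roots, matching the three types already exhibited. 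If you want a direct argument you would need to produce, for each orbit representative, a root $\alpha$ of $\tdp$ folding to it, and then use the surjectivity statements of Section~\ref{sec:periods} to move within orbits; as written, your converse covers only special roots.
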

\begin{proof}
  If $\alpha\in\ten$, the claim is clear. Now suppose that
  $\alpha\not\in\ten$, so $\alpha_I = \alpha+I(\alpha)$.  
  Write $\alpha\in \tdp$ in the block form as in
  Definition~\ref{def:involutions}. Then
  \begin{eqnarray*}
    &\alpha = (v,u,-u,e,e') \quad\text{and}\quad
      I(\alpha) = (v,u,-u,-e',-e)\\
    &I(\alpha) = \alpha - (0,0,0,e+e',e+e'), \quad\\
    &\alpha\cdot I(\alpha) = \alpha^2 - (e+e')^2,\quad
      \alpha_I^2 = 2\alpha^2 + 2\alpha\cdot I(\alpha)
  \end{eqnarray*}
  Since, $\alpha\ne I(\alpha)$, $e+e'$ is a nonzero vector in $E_8$.
  Therefore, $\alpha\cdot I(\alpha) > \alpha^2$.
  
  For $\alpha^2=-2$ this leaves the only possibility $\alpha\cdot
  I(\alpha)=0$ and $\alpha_I^2=-4$. Clearly, $\di(\alpha_I)=2$ in $\ten$
  so $\alpha_I$ is a root of $\ten$. But $\di(\alpha_I)\ne 2$ in $\tdp$.
  Otherwise, $e-e'\in 2E_8$, which implies that
  $(e+e')^2\equiv 0\pmod 4$, $\alpha\cdot I(\alpha)\ge 2$ and
  $\alpha_I^2\ge0$. 

  Now let $\alpha$ be a $(-4)$-root in $\tdp$. Since the
  divisibility of $\alpha$ is $2$, one must have $e,e'\in 2E_8$, so
  also $e+e'\in 2E_8$. But then $-(e+e')^2\ge 8$, $\alpha\cdot
  I(\alpha)\ge 4$ and $\alpha_I^2\ge0$, a contradiction.
  This completes the forward direction.
  
  The converse follows from \cite[2.13 and
  2.15]{namikawa1985periods-of-enriques}: up to $\Gamma_\enr=O(\ten)$
  acting on $\ten$ there is only one type of $(-2)$-vector and two
  types of $(-4)$-vectors. 
\end{proof}

\begin{definition}
  \label{def:J}
Consider a primitive vector $e\in \ten$ with $e^2=0$. We get
two hyperbolic lattices
\begin{displaymath}
  \oT_\enr = e^\perp(\text{in }\ten)/e, \quad
  \oT_\dP = e^\perp(\text{in }\tdp)/e,  \quad \textrm{with }\oT_\enr\subset\oT_\dP.
\end{displaymath}
There are induced involutions
$\oI_\enr$ and $\oI_\dP$ on these hyperbolic lattices.  We denote
$J = \oI_\enr\circ\oI_\dP = -\oI_\enr$, which is an involution
on $\oT_\dP$, for which the $(+1)$-eigenspace of $J$ in $\oT_\dP$
is $\oT_\enr$.
\end{definition}

\begin{definition}
  Let $e\in T_\enr$ be primitive isotropic. The stabilizer
  $\Gamma_{\enr, 2, e}$ of $e$ in $\gen$ fits into an exact sequence
  $$0\to U_e\to \Stab_{\gen}(e)\to \Gamma_{\enr, 2, e}\to 0$$
  where $U_e$ is the unipotent subgroup, which acts trivially on
  $e^\perp/e = \oT_\enr$. We define $\Gamma_{\dP,e}$ and $\Gamma_{\enr, e}$
  similarly.

  Denote by $W(\Gamma_{\dP,e})$ the reflection subgroup of
  $\Gamma_{\dP,e}$. Its Coxeter diagram $G(\oT_{\dP})$ is one of the two
  Coxeter diagrams in Figure~\ref{fig-k3-cusps}. Denote by
  $W(\Gamma_{\enr, 2, e})$ the reflection subgroup of
  $\Gamma_{\enr, 2, e}$; it is generated by reflections in the roots
  $\alpha\in\ten$ with $\alpha\cdot e=0$.
\end{definition}

\begin{definition}
  Let $\Lambda$ be an elliptic, parabolic, or hyperbolic lattice
  with an involution $J$, and let $G$ be its Coxeter diagram. We
  define the folded diagram $G^J$ to be the diagram with the vectors
  $\alpha_J$ for the roots $\alpha$ in $G$ for which
  the folded vectors $\alpha_J$ happen to be roots of $\Lambda^{J=1}$.
\end{definition}

\begin{lemma}\label{lem:all-diagrams-are-folded}
  Let $\ch$
  be a chamber for the action of $W(\Gamma_{\dP,e})$ on the positive cone
  $\cC(\oT_\dP)$ whose intersection with $\oT_{\enr}$ has
  maximal dimension. Then the cone $\ch^J:=\ch\cap \oT_\enr\otimes\bR$ is a
  fundamental chamber for $W(\Gamma_{\enr, 2, e})$ and its Coxeter
  diagram is the folded diagram $G(\oT_\dP)^J$. 
\end{lemma}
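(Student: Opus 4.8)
The plan is to prove the two assertions of Lemma~\ref{lem:all-diagrams-are-folded} in turn: first that $\ch^J = \ch \cap \oT_\enr\otimes\bR$ is a fundamental chamber for $W(\Gamma_{\enr,2,e})$, and second that the Coxeter diagram recording the walls of $\ch^J$ is exactly the folded diagram $G(\oT_\dP)^J$. Throughout, write $\oC = \cC(\oT_\dP)$ for the positive cone and note that $\oC \cap \oT_\enr\otimes\bR = \cC(\oT_\enr)$ since $J$ acts on $\oC$ and $\oT_\enr$ is the $(+1)$-eigenspace of the isometry $J$. The key structural input is that $W(\Gamma_{\enr,2,e})$ is generated by reflections $w_\beta$ in roots $\beta \in \oT_\enr$, and by Lemma~\ref{lem:folded-roots} every such $\beta$ is of the form $\alpha_J$ for a root $\alpha$ of $\oT_\dP$ with $\alpha\cdot e = 0$ (cases (1)--(3)): a $(-2)$-root that lies in $\oT_\enr$, a $(-4)$-root that lies in $\oT_\enr$, or the fold $\alpha + J(\alpha)$ of a $(-2)$-root $\alpha$ with $\alpha\cdot J(\alpha) = 0$. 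In all three cases one checks $w_{\alpha_J}$ and $w_\alpha$ (together with $w_{J(\alpha)}$ in case (3)) have the same reflecting hyperplane inside $\oT_\enr\otimes\bR$: for cases (1),(2) this is immediate, and for case (3) the hyperplane $\alpha_J^\perp \cap \oT_\enr\otimes\bR$ equals $\alpha^\perp \cap \oT_\enr\otimes\bR = J(\alpha)^\perp\cap\oT_\enr\otimes\bR$ because $\alpha\perp J(\alpha)$.

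First I would show $\ch^J$ is a chamber. Because $\ch$ is a fundamental domain for $W(\Gamma_{\dP,e})$ acting on $\oC$, its facets lie on hyperplanes $\alpha_i^\perp$ for the simple roots $\alpha_i$ of $G(\oT_\dP)$, and $\ch = \{v \in \oC : \alpha_i\cdot v \ge 0\}$. Intersecting with $\oT_\enr\otimes\bR$: a point $v \in \cC(\oT_\enr)$ lies in $\ch^J$ iff $\alpha_i\cdot v \ge 0$ for all $i$, and since $v$ is $J$-fixed, $\alpha_i \cdot v = J(\alpha_i)\cdot v$, so the inequality for $\alpha_i$ is the same as for $J(\alpha_i)$, and (when $\alpha_i \notin \oT_\enr$) pairing against $v$ gives $\alpha_i \cdot v = \tfrac12 \alpha_{i,J}\cdot v$. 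Hence $\ch^J = \{v \in \cC(\oT_\enr) : \alpha_{i,J}\cdot v \ge 0 \text{ for all } i \text{ with } \alpha_{i,J}^2 < 0\}$ — the hyperplanes with $\alpha_{i,J} = 0$ or $\alpha_{i,J}^2 \ge 0$ drop out. The cutting hyperplanes are exactly the $\alpha_{i,J}^\perp$ with $\alpha_{i,J}$ a root of $\oT_\enr$, so $\ch^J$ is a convex cone cut out by reflection hyperplanes of $W(\Gamma_{\enr,2,e})$. The maximality hypothesis on $\dim(\ch\cap\oT_\enr)$ is used to guarantee $\ch^J$ has nonempty interior in $\oT_\enr\otimes\bR$ and hence is a genuine chamber rather than a lower-dimensional face.

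The substantive step — and the main obstacle — is to show $\ch^J$ is a \emph{fundamental} chamber for $W(\Gamma_{\enr,2,e})$, i.e.\ that every $W(\Gamma_{\enr,2,e})$-orbit in $\cC(\oT_\enr)$ meets $\overline{\ch^J}$ and no two interior points are equivalent. The clean way is: given $v$ in the interior of $\cC(\oT_\enr)$, run the standard ``greedy'' reflection algorithm of Vinberg inside $\oT_\enr$ — at each stage, if some root $\beta \in \oT_\enr$ has $\beta\cdot v < 0$, reflect; this terminates (by discreteness of roots and positivity of the form on $v$), landing $v$ in $\overline{\ch^J}$. The issue is that $\ch^J$ might \emph{a priori} not be the chamber of the algorithm — there could be several $W(\Gamma_{\enr,2,e})$-chambers among the slices $w(\ch)\cap\oT_\enr\otimes\bR$ for $w\in W(\Gamma_{\dP,e})$. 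Here I would argue that any reflection $w_\beta \in W(\Gamma_{\enr,2,e})$ with $\beta = \alpha_J$ extends to an element of $W(\Gamma_{\dP,e})$: in cases (1),(2) it is $w_\alpha$ itself (which stabilizes $e^\perp/e$ and hence lies in $\Gamma_{\dP,e}$, being a reflection in a root orthogonal to $e$), and in case (3) the product $w_\alpha w_{J(\alpha)} = w_{J(\alpha)}w_\alpha$ restricts to $w_{\alpha_J}$ on $\oT_\enr\otimes\bR$ since $\alpha \perp J(\alpha)$ and $\alpha + J(\alpha) = \alpha_J$. Thus $W(\Gamma_{\enr,2,e})$ is the restriction to $\oT_\enr\otimes\bR$ of a subgroup of $W(\Gamma_{\dP,e})$ preserving $\oT_\enr\otimes\bR$; since $\ch$ is fundamental for the big group, and these elements carry $\ch\cap\oT_\enr\otimes\bR$ to $w(\ch)\cap\oT_\enr\otimes\bR$, one deduces $\ch^J$ is fundamental for $W(\Gamma_{\enr,2,e})$. (The converse direction of Lemma~\ref{lem:folded-roots} — that \emph{all} roots of $\oT_\enr$ arise as folds — is what ensures we are not missing any reflections.) Finally, by the computation in the second paragraph, the walls of $\ch^J$ are precisely the $\alpha_{i,J}^\perp$ for those simple roots $\alpha_i$ of $G(\oT_\dP)$ whose fold is a root of $\oT_\enr$, with edge data read off from $g_{ij}$ as in Section~\ref{sec:vinberg}; this is by definition the folded diagram $G(\oT_\dP)^J$, completing the proof.
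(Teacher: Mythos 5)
Your proposal is correct and follows essentially the same route as the paper's proof: restrict the walls $\alpha_i^\perp$ of $\ch$ to $\oT_\enr\otimes\bR$, identify them with $\alpha_{i,J}^\perp$ via Lemma~\ref{lem:folded-roots}, and use the converse direction of that lemma to ensure no reflection hyperplane of $W(\Gamma_{\enr,2,e})$ is missed. The paper's proof is terser — it does not spell out the extension of $w_{\alpha_J}$ to $w_\alpha$ or $w_\alpha w_{J(\alpha)}$ in $W(\Gamma_{\dP,e})$ as you do — but the substance is the same.
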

\begin{proof}
  Let $\alpha$ be one of the simple roots in
  equation~\eqref{eq:chamber}, so that $\alpha^\perp$ is a wall of~$\ch$.
  The intersection of the positive cone $\cC(\oT_\enr)$ with
  $\alpha^\perp$ is the same as with $\alpha_J^\perp$. If it is
  nonempty then $\alpha_J^2<0$. But then $\alpha_J$ is a root in
  $\oT_\enr$ by Lemma~\ref{lem:folded-roots}. So the walls of $\ch^J$
  are $\alpha_J^\perp$ for the folded roots in $G(\oT_\dP)^J$ and $\ch^J$ is the
  fundamental chamber for the reflection group with Coxeter diagram
  $G(\oT_\dP)^J$. 
\end{proof}

\subsection{Coxeter diagrams for the $0$-cusps of $\fen$ by folding}
\label{sec:coxeter-Fent}

We now find five involutions of the lattices $U\oplus E_8^2$ and
$U(2)\oplus E_8^2$ and compute folded diagrams for them. We prove that
they are the Coxeter diagrams for the groups $\Gamma_{\enr, 2, e}$ for some
isotropic vectors $e\in\ten$. These turn out to be the same as the
Coxeter diagrams computed in
\cite{sterk1991compactifications-enriques1} by Vinberg's method
\cite{vinberg1973some-arithmetic}.  We 
keep Sterk's numbering for the $0$-cusps. In the order of appearance,
they are 2, 1, 3, 4, 5.

\begin{lemma}\label{lem:cusp2}
  On the Coxeter diagram for the lattice $\oT_\dP = (18,0,0)_1$,
  consider the reflection $ J\colon \alpha_k \to \alpha_{20-k}$
  about the vertical line. Then $\oT_\dP^{J=1}\simeq U\oplus E_8(2)$
  and the folded diagram is shown in Figure~\ref{fig-cusp2}.
\end{lemma}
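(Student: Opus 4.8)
The plan is to carry out the folding explicitly on the lattice $\oT_\dP = U\oplus E_8^2 = (18,0,0)_1$ and then verify that the folded configuration of roots is genuinely the Coxeter diagram of $\Gamma_{\enr,2,e}$ for the appropriate isotropic $e\in\ten$. First I would identify the involution concretely: the reflection $J\colon\alpha_k\mapsto\alpha_{20-k}$ on the $19$-node diagram swaps the two $E_8$-tails and fixes the central part. Using the presentation \eqref{close-up} of $v$ as a linear combination of the $\alpha_i$, together with the explicit coordinates of the simple roots of $U\oplus E_8^2$ (e.g.\ realizing $U\oplus E_8^2$ inside $\tdp=U\oplus U(2)\oplus E_8^2$ so that $J$ acts as $-I_\enr$ from Definition~\ref{def:involutions}), one computes $\oT_\dP^{J=1}$. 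I expect the $(+1)$-eigenlattice to come out as $U$ in the hyperbolic part together with the diagonal $\Delta(E_8)$, which as a lattice is $E_8(2)$; hence $\oT_\dP^{J=1}\simeq U\oplus E_8(2)$, matching the Type~II lattice $(10,8,0)_1$ of the $0$-cusp in $\fen$ that maps to the divisibility-$1$ cusp of $\fhyp$.

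Next I would apply Lemma~\ref{lem:all-diagrams-are-folded}: the folded diagram $G(\oT_\dP)^J$ is a Coxeter diagram for $W(\Gamma_{\enr,2,e})$, provided $\ch$ is chosen so that $\ch\cap\oT_\enr$ has maximal dimension — here $\ch$ is the standard fundamental chamber of the Coxeter diagram of $(18,0,0)_1$, which is $J$-invariant, so this hypothesis is automatic. It then remains to read off the nodes and edges of $G^J$. The fixed roots $\alpha_k$ with $J(\alpha_k)=\alpha_k$ (the central spine of the diagram) contribute white $(-2)$-nodes. For each swapped pair $\{\alpha_k,\alpha_{20-k}\}$ with $\alpha_k\cdot\alpha_{20-k}=0$, Lemma~\ref{lem:folded-roots}(3) says the folded vector $\alpha_k+\alpha_{20-k}$ is a $(-4)$-root of $\ten$, giving a black node; pairs that are joined by an edge in the original diagram would have $\alpha_k\cdot J(\alpha_k)\neq 0$ and by Lemma~\ref{lem:folded-roots} do not fold to a root, so they contribute nothing. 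The edges of $G^J$ are then computed from the Gram products $g_{ij}=(\beta_i\cdot\beta_j)/\sqrt{\beta_i^2\beta_j^2}$ among the folded roots $\beta_i$, using the convention of Section~\ref{sec:vinberg}; a routine but finite check (dominated by the nodes adjacent to the fixed spine and to the affine vertex coming from \eqref{close-up}) produces the diagram in Figure~\ref{fig-cusp2}.

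Finally, to identify this folded diagram with the genuine Coxeter diagram of the $0$-cusp, I would invoke that $W(\Gamma_{\enr,2,e})$ is generated by reflections in the roots $\alpha\in\ten$ with $\alpha\cdot e=0$ (Definition following Lemma~\ref{lem:folded-roots}), so by Lemma~\ref{lem:all-diagrams-are-folded} the folded chamber $\ch^J$ is a fundamental domain and the folded diagram is \emph{the} Coxeter diagram; comparison with Sterk's list \cite{sterk1991compactifications-enriques1} then confirms it is the diagram he computed for cusp~$2$ by Vinberg's algorithm. The main obstacle I anticipate is purely bookkeeping: pinning down the correct coordinate embedding $U\oplus E_8^2\hookrightarrow\tdp$ so that the abstract reflection $\alpha_k\mapsto\alpha_{20-k}$ really coincides with the geometrically meaningful involution $-I_\enr$ on $\oT_\dP$, and then correctly tracking which pairs of simple roots are orthogonal versus adjacent — a mistake there changes whether a folded node is present or absent, or whether it is white or black. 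Everything after that is a finite Gram-matrix computation with no conceptual content.
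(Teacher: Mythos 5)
Your proposal is correct and follows essentially the same route as the paper: one identifies the generators of the fixed sublattice explicitly (the folded vectors $\alpha_k+\alpha_{20-k}$ spanning $E_8(2)$, together with $\alpha_{10}$ and the isotropic vector $v$ of \eqref{close-up} spanning an orthogonal copy of $U$) and then reads off the folded diagram from the Gram matrix exactly as you describe. The extra steps you include --- lifting the diagram reflection to an involution of $\tdp$ and matching the folded diagram with $W(\Gamma_{\enr, 2, e})$ and Sterk's list --- are not part of this lemma but belong to Lemma~\ref{lem:lift} and Corollary~\ref{cor:folded-diagrams}, so they are harmless but not needed here.
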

\begin{figure}
  \centering
  \includegraphics{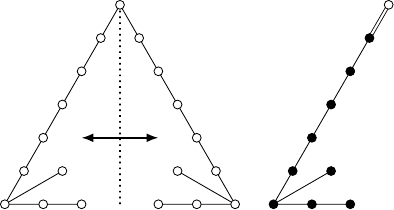}
  \caption{Folded diagram for cusp 2}
  \label{fig-cusp2}
\end{figure}

\begin{proof}
  The sublattice $\oT_\dP^{J=1}$ is generated by the vectors
  $\alpha_k + \alpha_{20-k}$, $1\le k\le 8$ spanning $E_8(2)$ and two
  vectors spanning an orthogonal $U$: $\alpha_{10}$ along with the vector $v$ in
  the relation~\eqref{close-up}. The computation of the folded Coxeter
  diagram is immediate.
\end{proof}

\begin{lemma}\label{lem:cusps1345}
  Consider the following involutions $J$ on the lattice $\oT_\dP =
  (18,2,0)_1$:
  \begin{enumerate}
  \item rotation of the diagram by $180$ degrees. 
    \setcounter{enumi}{2}
  \item reflection of the diagram about the diagonal, followed by a
    lattice reflection in the root $\alpha_{20}$.
  \item reflection of the diagram about a horizontal line. 
  \item the composition of $8$ commuting reflections in the roots
    $\alpha_1, \alpha_3, \dotsc, \alpha_{15}$. 
  \end{enumerate}
  The fixed sublattice $\oT_\dP^{J=1}$ is isomorphic to
  $U(2)\oplus E_8(2)$ in case (1) and to $U\oplus E_8(2)$ in cases
  (3,4,5). The folded diagrams are shown in Figure~\ref{fig-cusps1345}.
\end{lemma}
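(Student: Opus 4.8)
The plan is to verify each of the four cases of Lemma~\ref{lem:cusps1345} by the same three-step recipe, exactly as was done for Lemma~\ref{lem:cusp2}: (i) check that the stated map $J$ is a genuine involution of the lattice $\oT_\dP = (18,2,0)_1$, i.e.\ that it preserves the bilinear form; (ii) identify the invariant sublattice $\oT_\dP^{J=1}$ by exhibiting explicit generators and recognizing the resulting Gram matrix as $U(2)\oplus E_8(2)$ in case (1) and $U\oplus E_8(2)$ in cases (3), (4), (5) --- here one can also shortcut using the classification of indefinite even $2$-elementary lattices by $(r,a,\delta)$ recalled after Lemma~\ref{lem:unique-lattices}, since $r$, $a$, $\delta$ are easy to read off; and (iii) compute the folded diagram $G(\oT_\dP)^J$ by applying Lemma~\ref{lem:folded-roots} to decide, for each simple root $\alpha_i$ of the diagram in Figure~\ref{fig-k3-cusps}, whether $\alpha_{i,J}$ is a root of $\oT_\dP^{J=1}$, computing the pairings $\alpha_{i,J}\cdot \alpha_{j,J}$, and then drawing the edges according to the conventions in Section~\ref{sec:vinberg}. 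The validity of step~(iii) --- that the folded diagram really is the Coxeter diagram of the reflection group $W(\Gamma_{\enr,2,e})$ for a suitable isotropic $e\in\ten$ --- is guaranteed by Lemma~\ref{lem:all-diagrams-are-folded}, provided one exhibits the isotropic vector $e$ and checks that the chosen chamber $\ch$ for $W(\Gamma_{\dP,e})$ has maximal-dimensional intersection with $\oT_\enr\otimes\bR$.

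Case by case, the symmetries of the diagram for $(18,2,0)_1$ from Figure~\ref{fig-k3-cusps} make step~(i) routine: cases (1), (3) are diagram automorphisms, hence automatically isometries; case (5) is a product of $8$ reflections in the pairwise-orthogonal $(-2)$-roots $\alpha_1,\alpha_3,\dots,\alpha_{15}$ (one must first verify this orthogonality from the diagram --- these are the alternate nodes along one $E_8$-leg plus the $U(2)$-attachment, so they form an $A_1^8$ subsystem), and such a product is manifestly an involutive isometry; case (3) combined in case~(3)'s actual statement with the extra reflection $w_{\alpha_{20}}$ is again a composition of isometries, and one checks $w_{\alpha_{20}}$ commutes with the diagonal reflection so the composite is an involution. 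For step~(ii), in case~(1) the $180^\circ$ rotation has invariant lattice spanned by the ``symmetrized'' roots $\alpha_i + J(\alpha_i)$ together with the central fixed nodes; counting shows rank $10$, and the pairings exhibit the $U(2)\oplus E_8(2)$ form (the $(2)$-scaling on $U$ appears precisely because the rotation swaps the two ends of the $U(2)$ leg). In cases (3), (4), (5) the same bookkeeping gives rank $10$ but now with a hyperbolic $U$ summand (an honest $U$, not $U(2)$), reflecting that the relevant fixed leg contributes an unscaled hyperbolic plane; $\delta$ is determined by whether the discriminant form is $\bZ$-valued, which one reads off the explicit generators.

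The main obstacle I anticipate is step~(iii) in cases (3), (4), (5): computing the folded diagram requires applying Lemma~\ref{lem:folded-roots} to every simple root and then carefully tracking which folded roots have square $-2$ versus $-4$ (white versus black vertices) and which pairs of walls meet inside $\cH$, are parallel, or are ultraparallel --- i.e.\ computing all the pairwise Gram entries $g_{ij}$ and translating them into $m$-tuple, thick, or dotted edges. This is purely mechanical but error-prone, especially for case~(5) where the $8$ reflections significantly reshuffle the root system and several folded vectors with $\alpha\cdot I(\alpha)=0$ jump from square $-2$ to square $-4$. A useful consistency check at the end is to compare the resulting diagrams in Figure~\ref{fig-cusps1345} directly against those computed by Sterk \cite{sterk1991compactifications-enriques1} via Vinberg's algorithm; agreement confirms both that the chamber $\ch$ was chosen correctly (maximal intersection with $\oT_\enr$) and that no folded root was missed. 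I would also double-check that the number of vertices and the parabolic subdiagrams of each folded diagram match the expected $1$-cusp structure of $\fen$ at the corresponding $0$-cusp, as recorded in Figure~\ref{fig-sterk-allcusps}.
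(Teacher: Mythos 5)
Your three-step plan has the same overall shape as the paper's argument, and steps (i) and (iii) are unproblematic. The genuine gap is in step (ii), and it is exactly the subtle point the paper's proof is built around: the fixed sublattice $\oT_\dP^{J=1}$ is \emph{not} spanned by the folded roots $\alpha_i+J(\alpha_i)$ together with the fixed nodes. In every case the folded roots generate only an index-$2$ sublattice of $\oT_\dP^{J=1}$, so the Gram matrix of the generating set you wrote down describes the wrong lattice. Concretely, in case (1) the folded roots form an $\wE_8(2)$ configuration plus one further root, and had you computed the Gram determinant of this generating set you would have found $2^{12}$ in rank $10$; since an even $2$-elementary lattice of rank $10$ has discriminant group at most $(\bZ_2)^{10}$, this span cannot be $U(2)\oplus E_8(2)$, whose determinant is $2^{10}$. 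The missing generator is \emph{half} of the isotropic vector of an affine subdiagram of the folded diagram --- $\wE_8(2)$ for cusp 1, the $\wA_1(2)$ inside $(\wE_7\wA_1)(2)$ for cusp 3, and likewise $\wD_8(2)$ and $(\wA_7\wA_1)(2)$ for cusps 4 and 5. This half-vector is integral in $\oT_\dP$ because of relations such as \eqref{close-up} and \eqref{eq:UE82}, it is manifestly $J$-fixed, and together with the root disjoint from the affine configuration it supplies the $U(2)$ (resp.\ $U$) summand; only after adjoining it do you obtain all of $\oT_\dP^{J=1}$. For the same reason your proposed shortcut of ``reading off'' $(r,a,\delta)$ is circular: $a$ and $\delta$ are invariants of the correct overlattice, not of the root span. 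Once the half-isotropic generators are added, the rest of your plan (explicit Gram matrices, the $2$-elementary classification, and the cross-check against Sterk and sagemath) goes through and coincides with the paper's proof.
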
 

\begin{figure}[h]
  \centering
  \includegraphics[width=360pt]{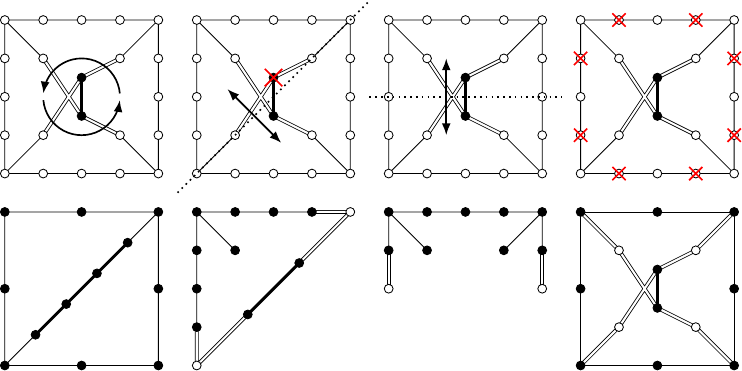}
  \caption{Folded diagrams for cusps 1, 3, 4, 5} 
  \label{fig-cusps1345}
\end{figure}

\begin{proof}
  The computation of the folded Coxeter diagrams is immediate. The
  fixed sublattices are computed as follows. In all cases the roots
  generate an index-$2$ sublattice of $\oT_\dP^{J=1}$. 

  The Coxeter diagram for cusp 1 contains a copy of $\wE_8(2)$,
  cf. diagram 12 in Figure~\ref{fig-pars}, and so contains a copy of $E_8(2)$.
  Half of the isotropic vector of $\wE_8(2)$ is integral, i.e.~lies in
  $\oT_\dP^{J=1}$. Together with the root disjoint from $E_8(2)$, these
  two elements form an orthogonal copy of $U(2)$,
  and together they span $\oT_\dP^{J=1}$.
  This gives $\oT_\dP^{J=1} = U(2)\oplus E_8(2)$.

  For cusp 3 we observe from diagram 31 in Figure~\ref{fig-pars} that the
  Coxeter diagram contains a copy of $(\wE_7\wA_1)(2)$,
  i.e. $\wE_7\wA_1$ with the doubled bilinear form. Inside it is a
  copy of $(E_7A_1)(2)$ which is an index-$2$ sublattice of
  $E_8(2)$. One checks that this $E_8(2)$ is indeed a sublattice of
  $\oT_\dP^{J=1}$. Half of the isotropic vector of $\wA_1(2)$
  together with the root disjoint from $(E_7A_1)(2)$ form an
  orthogonal copy of $U$.
  The computations for cusps 4 and 5 are
  similar, starting with the subdiagrams $\wD_8(2)$ and
  $(\wA_7\wA_1)(2)$, for cusps 41 and 51.
  We also made a check with sagemath.
\end{proof}

\begin{lemma}\label{lem:lift}
  The involution $J$ on lattice $\oT_\dP$
  of Lemmas~\ref{lem:cusp2},
  \ref{lem:cusps1345} can be lifted to an involution $ {I}$ on $\tdp$
  with the fixed sublattice $\ten$. Taking $I_\enr=- {I}$ gives $T_\dP^{I_\enr=-1}
  = T_\enr$. 
\end{lemma}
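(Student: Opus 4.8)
The plan is to reverse the passage to the quotient $e^\perp/e$ carried out in Definition~\ref{def:J}, lifting each of the five involutions $J$ on $\oT_\dP$ to an involution $I$ on $\tdp$ that restricts to $J$ on the cusp lattice and has fixed sublattice equal to $\ten$. First I would fix, for each case, the primitive isotropic vector $e\in\ten\subset\tdp$ defining the $0$-cusp; the key structural fact is that $e$ sits in a rank-$2$ even sublattice $\Pi_e\cong U$ or $U(2)$ of $\tdp$ (a ``hyperbolic handle''), with $\tdp = \Pi_e\oplus \Pi_e^\perp$ when $\di(e)=2$, or with a controlled gluing when $\di(e)=1$; and $\oT_\dP = e^\perp/e$ is identified with $\Pi_e^\perp$ (up to finite-index corrections in the $\di(e)=1$ case). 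On $\Pi_e$, the desired involution is forced: since the lift must commute with the cusp data and preserve $h$, it acts on $\Pi_e\cong U=\langle e,f\rangle$ by $e\mapsto e$, $f\mapsto f$ (or the analogous fixed action on $U(2)$). Then I would set $I = \mathrm{id}_{\Pi_e}\oplus (\text{a lift of }J)$ on $\Pi_e^\perp$ and check compatibility with the gluing/overlattice.

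The heart of the argument is thus the lift of $J$ from $\oT_\dP$ to an isometry of $\Pi_e^\perp$. For cusp~2 (Lemma~\ref{lem:cusp2}) and cusps 1, 3, 4, 5 (Lemma~\ref{lem:cusps1345}), $J$ is described explicitly on the Coxeter diagram: a diagram symmetry, possibly post-composed with lattice reflections. A diagram symmetry of the Coxeter diagram of a hyperbolic lattice is realized by an isometry of that lattice (it permutes simple roots, hence extends uniquely to $O(\oT_\dP)$ by linearity over the root lattice, which has finite index), and reflections $w_\alpha$ are honest isometries. So $J\in O(\oT_\dP) = O(\Pi_e^\perp)$ already; there is nothing to lift \emph{within} $\Pi_e^\perp$ — the content is rather that $I := \mathrm{id}_{\Pi_e}\oplus J$ genuinely extends to $\tdp$ (when $\di(e)=1$ one must verify that $J$ acts compatibly on the discriminant form glue $\Pi_e^\perp{}^*/\Pi_e^\perp \cong \Pi_e^*/\Pi_e$, which holds because $J$ fixes $h$ and the relevant discriminant group is small — exactly the mechanism used in the proof of Lemma~\ref{lem:fen-closed-in-fhyp}).

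Next I would compute the fixed sublattice $\tdp^{I=1}$ and identify it with $\ten$. By construction $\tdp^{I=1} = \Pi_e^{I=1} \oplus (\Pi_e^\perp)^{J=1}$ on the level of rational span, and $(\Pi_e^\perp)^{J=1} = \oT_\dP^{J=1}$ was already computed in Lemmas~\ref{lem:cusp2} and~\ref{lem:cusps1345} to be $U(2)\oplus E_8(2)$ for cusps 1, 2 with $\di(e)=1$ and $U\oplus E_8(2)$ for cusps 3, 4, 5 (with the appropriate $\Pi_e$). Adding the hyperbolic handle $\Pi_e$ gives $\tdp^{I=1} \cong U\oplus U(2)\oplus E_8(2) = \ten$ in every case (the parities of the two $U$-summands are distributed between the handle and the cusp lattice, but their direct sum is always $U\oplus U(2)\oplus E_8(2)$, which is the unique even $2$-elementary lattice of signature $(1,11)$ and invariants $(12,10,0)$). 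Since this fixed sublattice is primitively embedded and of the right isometry type, by the uniqueness in Lemma~\ref{lem:unique-lattices} it is carried by an element of $O(\tdp)$ to the standard $\ten\subset\tdp$; replacing $I$ by its conjugate, we may assume $\tdp^{I=1} = \ten$ on the nose. Finally, setting $I_\enr = -I$ gives $T_\dP^{I_\enr=-1} = T_\dP^{I=1} = T_\enr$, as claimed.

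The main obstacle I anticipate is bookkeeping in the $\di(e)=1$ cases, where $\tdp$ is not the orthogonal direct sum $\Pi_e\oplus\Pi_e^\perp$ but a nontrivial overlattice, so one must track the glue vector and verify that $\mathrm{id}\oplus J$ preserves it; this is a finite, case-by-case discriminant-form computation (of the same flavor as in Lemma~\ref{lem:fen-closed-in-fhyp}), and showing it uniformly for cusps 1, 3, 4, 5 — rather than five separate hand computations — is the part requiring care.
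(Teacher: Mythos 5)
There is a genuine gap, and it sits exactly where the real content of the lemma lies. Your key claim --- that $\tdp^{I=1}=\Pi_e\oplus\oT_\dP^{J=1}$ is isomorphic to $U\oplus U(2)\oplus E_8(2)$ ``in every case'' because the parities of the two $U$-summands merely get redistributed --- is false for cusps 3, 4, 5. There $\oT_\dP=U(2)\oplus E_8^2$ has $\di(e)=1$, so the handle is $\Pi_e\simeq U$, while $\oT_\dP^{J=1}\simeq U\oplus E_8(2)$; the naive extension $I=\mathrm{id}_{\Pi_e}\oplus J$ therefore has fixed lattice $U\oplus U\oplus E_8(2)$, whose discriminant group is $\bZ_2^{8}$, not $\bZ_2^{10}$, so it is \emph{not} isomorphic to $\ten=U\oplus U(2)\oplus E_8(2)$. (Your construction is fine for cusps 1 and 2, where the count does come out to $U\oplus U(2)\oplus E_8(2)$, and this is exactly how the paper handles those two cases.) A secondary inaccuracy: your anticipated obstacle is the wrong one --- when $\di(e)=1$ the $U$-handle splits off as an honest orthogonal summand, so there is no overlattice/glue to track; the difficulty is not gluing but that the trivial extension produces the wrong fixed lattice.

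The paper's fix for cusps 3, 4, 5 is to abandon the obvious splitting $\tdp=U\oplus\oT_\dP$ and instead embed $\oT_\dP$ by a twisted section $s\colon x\mapsto x+a(x)e$ for a suitable $a\in\oT_\dP^*$. The orthogonal complement of $s(\oT_\dP)$ is then $\langle e,\,2f-2a\rangle$ with $(2f-2a)^2=4a^2$, which is isomorphic to $U(2)$ precisely when $a\notin\oT_\dP$ but $q_{\oT_\dP}(a)\in\bZ$; choosing $a=\tfrac12 e'$ for $U(2)=\langle e',f'\rangle\subset\oT_\dP$ achieves this. Defining $I$ to be $J$ on $s(\oT_\dP)$ and the identity on this $U(2)$ gives $\tdp^{I=1}\simeq U(2)\oplus(U\oplus E_8(2))\simeq\ten$, after which Lemma~\ref{lem:unique-lattices} normalizes the embedding, as you correctly anticipated for the final step. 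Without some such modification of the handle --- i.e.\ without replacing the invariant $U$ by an invariant $U(2)$ --- your argument cannot produce the correct fixed sublattice at cusps 3, 4, 5.
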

\begin{proof}
  For the involution of Lemma~\ref{lem:cusp2} the statement is obvious:
  we simply define $ {I}$ to be the identity on the first summand of
  $\tdp = U(2)\oplus\oT_\dP$. Similarly for cusp~(1) in
  Lemma~\ref{lem:cusps1345} one has $\tdp = U\oplus\oT_\dP$ and
  we extend $ {I}$ to $U$ as the identity. 

  In the cases (3,4,5) we have an exact sequence of abelian groups
  \begin{displaymath}
    0 \to U \to \tdp = U\oplus \oT_\dP\to \oT_\dP \to 0
  \end{displaymath}
  with $\oT_\dP=U(2)\oplus E_8^2$, $\oT_\dP^{J=1}=U\oplus E_8(2)$,
  and the trivial extension does not work.

  Write $U=\la e,f\ra$ using the standard basis with $e^2=f^2=0$,
  $e\cdot f=1$. A section
  $s\colon \oT_\dP \to \bZ e\oplus \oT_\dP \subset U\oplus \oT_\dP$ is
  the same as an element $a\in \oT_\dP^*$, so that $x\mapsto x+a(x)$. The
  orthogonal complement of $\oT_\dP$ is $\la e, f-a\ra\simeq U$ if
  $a\in\oT_\dP$, and $\la e, 2f-2a\ra$ if $a\notin\oT_\dP$. One has
  $(2f-2a)^2 = 4a^2$. From this, we see that the last lattice is
  isomorphic to $U(2)$ if the discriminant form of $\oT_\dP$ satisfies
  $q_{\oT_\dP}(a)\in\bZ$, and it is isomorphic to
  $I_{1,1}(2)=\la 2\ra\oplus\la -2\ra$ otherwise.
  
  The discriminant
  form of $\oT_\dP$ is the same as for
  $U(2) = \la e', f'\ra\subset\oT_\dP$. We choose $a=\frac12 e'$ and
  define the involution $I$ on $\tdp$ to be ${J}$ on
  $s(\oT_\dP)$ and the identity on its orthogonal complement $U(2)$. Then
  $$\tdp^{I=1} \simeq U(2)\oplus \oT_\dP^{J=1} \simeq \ten.$$ We
  complete the proof by Lemma~\ref{lem:unique-lattices}.
\end{proof}

\begin{corollary}\label{cor:folded-diagrams}
  The above five folded diagrams $G(\oT_\dP)^J$ are precisely the
  Coxeter diagrams for the reflection groups
  $W(\Gamma_{{\rm En},2,e})$ for the isotropic vectors
  $e\in\ten \pmod{\gen}$.
\end{corollary}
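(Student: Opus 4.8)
The plan is to combine Lemma~\ref{lem:all-diagrams-are-folded} with Lemma~\ref{lem:lift} and the uniqueness result of Lemma~\ref{lem:unique-lattices}. By Lemma~\ref{lem:lift}, for each of the five involutions $J$ of Lemmas~\ref{lem:cusp2} and~\ref{lem:cusps1345} on the hyperbolic lattice $\oT_\dP$ there is a lift to an involution $I$ on $\tdp$ with $\tdp^{I=1}=\ten$, and setting $I_\enr=-I$ realizes the del Pezzo/Enriques pair of involutions from Definition~\ref{def:involutions} on the nose. Since the embedding $\ten\subset\tdp$ together with the polarization vector $h$ is unique up to $O(L)$ by Lemma~\ref{lem:unique-lattices} (and $I_\enr$ is determined by its fixed sublattice $\ten$, as it acts as $-1$ on the orthogonal complement $\Delta(U)\oplus\Delta(E_8)$, cf.~Definition~\ref{def:involutions}), the data $(\tdp, I_\enr, h)$ produced here is the standard one, so $\Gamma_{\enr,2,e}$ as defined via the stabilizer in $\gen$ agrees with the reflection-and-symmetry data read off from the folded diagram.

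First I would fix a $0$-cusp, i.e.~a $\gen$-orbit of primitive isotropic $e\in\ten$; by Lemma~\ref{lem:bb-extensions} the five Sterk cusps $1,\dots,5$ are distinguished inside $\tdp$ by the divisibility $\di(e)\in\{1,2\}$ together with the incidence pattern with the $1$-cusps, and the two $0$-cusps of $\fhyp$ correspond to $\di(e)=2$ (the lattice $(18,0,0)_1$, used in Lemma~\ref{lem:cusp2} for cusp~$2$) and $\di(e)=1$ (the lattice $(18,2,0)_1$, used in Lemma~\ref{lem:cusps1345} for cusps $1,3,4,5$). For such an $e$, the quotient $\oT_\enr=e^\perp/e$ is computed from the tables after Definition~\ref{def:involutions}: it is $U\oplus E_8(2)$ or $U(2)\oplus E_8(2)$, and one checks this matches $\oT_\dP^{J=1}$ computed in Lemmas~\ref{lem:cusp2} and~\ref{lem:cusps1345}, so that the involution $J$ on $\oT_\dP$ descending from $I$ (this is the involution $\oI_\enr$, $\oI_\dP$ machinery of Definition~\ref{def:J}) really is the tabulated reflection/product-of-reflections. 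Then Lemma~\ref{lem:all-diagrams-are-folded} applies directly: $\ch\cap\oT_\enr\otimes\bR$ is a fundamental chamber for $W(\Gamma_{\enr,2,e})$, and its Coxeter diagram is the folded diagram $G(\oT_\dP)^J$ depicted in Figures~\ref{fig-cusp2} and~\ref{fig-cusps1345}.

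It remains to see that all five $\gen$-orbits of isotropic $e\in\ten$ are accounted for and that no two of the folded diagrams coincide as abstract Coxeter diagrams with even/odd node decoration, so the assignment cusp $\leftrightarrow$ folded diagram is a bijection. For the first point I would invoke Sterk's classification in \cite{sterk1991compactifications-enriques1}: there are exactly five $0$-cusps, and the five involutions $J$ exhibited above produce five pairs $(\tdp,\ten)$ which, after choosing $e$ to be the isotropic vector visible in the folded diagram (half the isotropic vector of the relevant affine subdiagram $\wE_8(2)$, $(\wE_7\wA_1)(2)$, $\wD_8(2)$, $(\wA_7\wA_1)(2)$, or the $U$-summand for cusp~$2$), have the divisibilities and $1$-cusp incidences of Sterk's $e_1,\dots,e_5$ — this is already recorded in Figure~\ref{fig-sterk-allcusps} and its proof. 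The distinctness of the diagrams is visible from Figures~\ref{fig-cusp2} and~\ref{fig-cusps1345}. The main obstacle I anticipate is the bookkeeping in the previous sentence: one must verify that the particular $e$ realized combinatorially in each folded diagram is $\gen$-equivalent to the $e$ coming from Sterk's enumeration — equivalently, that the five lifts $I$ of Lemma~\ref{lem:lift}, although each individually valid, are pairwise $O(L)$-inequivalent and exhaust the possibilities — which is where one genuinely uses that $\gen=\Gamma_\enr\cap\Gamma_\dP$ and the surjectivity statements from \cite[1.5.2, 3.6.3]{nikulin1979integer-symmetric} quoted after the definition of $\gen$.
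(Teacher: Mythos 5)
Your proposal is correct and follows essentially the same route as the paper's (first) proof: combine Lemmas~\ref{lem:all-diagrams-are-folded}, \ref{lem:cusp2}, \ref{lem:cusps1345} and \ref{lem:lift} to see that each folded diagram is the Coxeter diagram of $W(\Gamma_{\enr,2,e})$ for some isotropic $e\in\ten$, then invoke Sterk's count of exactly five $0$-cusps to conclude the list is complete. Your added remarks on the pairwise distinctness of the diagrams and the matching with Sterk's $e_1,\dots,e_5$ make explicit a point the paper leaves implicit, but do not change the argument.
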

\begin{proof}
  By Lemmas~\ref{lem:all-diagrams-are-folded}, \ref{lem:cusp2}, \ref{lem:cusps1345},
  \ref{lem:lift} the five diagrams we have found are Coxeter diagrams
  for the reflection groups $W(\Gamma_{{\rm En},2,e})$ for some isotropic
  vectors $e\in\ten$.  By \cite{sterk1991compactifications-enriques1}
  the space $\fen$ has exactly five $0$-cusps, so we have found them
  all.
\end{proof}

Indeed, our Coxeter diagrams, obtained by folding, coincide with the
ones found by Sterk in \cite{sterk1991compactifications-enriques1} who
used the Vinberg algorithm \cite{vinberg1973some-arithmetic} to compute them.

\begin{proof}[Second proof, without using
  \cite{sterk1991compactifications-enriques1}]
  By Lemmas~\ref{lem:all-diagrams-are-folded} and \ref{lem:lift} it is
  sufficient to find all involutions ${J}$ on hyperbolic lattices
  $\oT_\dP = U\oplus E_8^2$ and $\oT_\dP = U(2)\oplus E_8^2$ for which
  the sublattice $\oT_\dP^{J=1}$ is isomorphic to
  $U(2)\oplus E_8(2)$ or $U\oplus E_8(2)$ and such that the folded
  root vectors define a chamber $\ch^J$ lying inside a chamber $\ch$
  for the Coxeter diagram $G(\oT_\dP)$.  Any such involution is a
  product of an involution of the diagram $G(\oT_\dP)$, which may be
  the identity, composed with a commuting involution in the Weyl
  group. It is well known that an involution in a Coxeter group is a
  composition of commuting reflections. 
  
  Under the condition
  $\rk\,\oT_\dP^{J=1}=10$, this reduces the check to the
  following possibilities, in addition to the ones in
  Lemma~\ref{lem:cusp2} and cases (1,4) of Lemma~\ref{lem:cusps1345}:

  \begin{enumerate}\renewcommand{\theenumi}{\alph{enumi}}
  \item a composition of reflections in $8$ orthogonal roots of $G(U\oplus E_8^2)$.
  \item the diagonal involution of $G(U(2)\oplus E_8^2)$ composed with
    a single reflection in $\alpha_0$, $\alpha_8$, $\alpha_{16}$,
    $\alpha_{18}$, $\alpha_{20}$ or $\alpha_{21}$. 
    \item a composition of reflections in $8$ orthogonal roots of
    $G(U(2)\oplus E_8^2)$.
  \end{enumerate}
  The first case does not occur. We confirmed with sagemath that
  $\oT_\dP^{J=1}$ is never isomorphic to $U(2)\oplus E_8(2)$,
  and that it is isomorphic to $U\oplus E_8(2)$ only in the second
  case for $\alpha_{20}$, and in the last case for
  $\{\alpha_1,\alpha_3,\dotsc,\alpha_{15}\}$.
\end{proof}

\subsection{$1$-cusps of $\fen$ by folding}
\label{sec:Fen-1cusps} 

\begin{lemma}
  The $1$-cusps of $\fen$ correspond to the maximal parabolic
  subdiagrams of the Coxeter diagrams of $U\oplus E_8^2$ and
  $U(2)\oplus E_8^2$ which are symmetric with respect to one of the
  five involutions of Lemma~\ref{lem:cusps1345}. For cusps 3 and 5
  this means that the subdiagram has to contain the roots
  $\alpha_{20}$, resp. $\alpha_1,\alpha_3,\dots,\alpha_{15}$ in which
  the reflections are made. 
\end{lemma}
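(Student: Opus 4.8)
The plan is to reduce the statement about $1$-cusps of $\fen$ to a statement about symmetric maximal parabolic subdiagrams using exactly the same folding machinery that Corollary~\ref{cor:folded-diagrams} used for the $0$-cusps. Recall that $1$-cusps of $\fen$ correspond to primitive isotropic planes $\Pi \subset \ten$ modulo $\gen$, and that each such plane is realized inside the lattice $\ten = \oT_\dP^{I_\enr = -1}$ sitting inside $\tdp$. First I would fix one of the five isotropic vectors $e \in \ten$ from Corollary~\ref{cor:folded-diagrams}, with associated hyperbolic lattice $\oT_\enr = e^\perp/e \subset \oT_\dP = e^\perp/e$ (in $\tdp$) and the involution $J = -\oI_\enr$ on $\oT_\dP$ with $\oT_\dP^{J=1} = \oT_\enr$. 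A $1$-cusp of $\fen$ whose closure contains the $0$-cusp $e$ corresponds to a primitive isotropic line in $\oT_\enr$ modulo $W(\Gamma_{\enr,2,e})$-equivalence (plus the action of the symmetry group of the chamber), and by Vinberg's theory these isotropic lines in a chamber $\ch^J$ are exactly the ones spanned by the generators of the unique isotropic lines in the maximal parabolic subdiagrams of the Coxeter diagram $G(\oT_\dP)^J$, i.e.\ of the folded diagram.

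Next I would match parabolic subdiagrams of the folded diagram $G(\oT_\dP)^J$ with $J$-symmetric parabolic subdiagrams of $G(\oT_\dP)$. The point is that folding is a surjection of node-sets $G(\oT_\dP) \to G(\oT_\dP)^J$ whose fibers are the $J$-orbits, and under it a subdiagram of $G(\oT_\dP)^J$ pulls back to a $J$-invariant subdiagram of $G(\oT_\dP)$; conversely a $J$-symmetric subdiagram pushes forward. I would check that this correspondence sends maximal parabolic subdiagrams to maximal parabolic subdiagrams. For the affine (parabolic) types this is essentially the classical statement that folding an affine Dynkin diagram by a diagram automorphism (or by commuting reflections, via Lemma~\ref{lem:folded-roots}) yields an affine Dynkin diagram, and that the isotropic vector of the folded affine diagram is the folded isotropic vector. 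One has to be slightly careful in cases (3) and (5), where $J$ is a diagram symmetry composed with reflections in the roots $\alpha_{20}$ (cusp 3) resp.\ $\alpha_1,\alpha_3,\dots,\alpha_{15}$ (cusp 5): here a $J$-invariant subdiagram in the honest sense must actually contain those roots, since $J$ does not act on $G(\oT_\dP)$ as a graph automorphism but as a graph automorphism twisted by those reflections, and a subdiagram closed under $J$ and giving a well-defined folded subdiagram must be closed under the reflections, hence contain the reflecting roots. This is exactly the extra condition stated in the lemma for cusps 3 and 5.

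Finally I would assemble the pieces: by Lemma~\ref{lem:all-diagrams-are-folded} the chamber $\ch^J$ for $W(\Gamma_{\enr,2,e})$ has Coxeter diagram $G(\oT_\dP)^J$; its maximal parabolic subdiagrams biject with its isotropic lines up to $W(\Gamma_{\enr,2,e})$-equivalence, hence with the $1$-cusps of $\fen$ adjacent to the $0$-cusp $e$; and these in turn biject with the $J$-symmetric maximal parabolic subdiagrams of $G(\oT_\dP)$ (containing the reflecting roots in cases 3, 5) by the folding correspondence of the previous paragraph. Running over the five vectors $e$ and noting that a $1$-cusp adjacent to several $0$-cusps is counted compatibly (a parabolic subdiagram symmetric for two of the involutions yields the same isotropic plane of $\ten$), we recover all nine $1$-cusps of $\fen$ with their incidences. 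I expect the main obstacle to be the bookkeeping in cases (3) and (5): verifying cleanly that ``$J$-symmetric and compatible with folding'' is equivalent to ``invariant under the underlying graph automorphism and containing the reflecting roots,'' and that this does not spuriously create or destroy maximal parabolic subdiagrams. The purely combinatorial enumeration that follows (reading off which of the maximal parabolic subdiagrams of Figure~\ref{fig-k3-allcusps} survive) is routine once this equivalence is in hand, and as the authors note it can be cross-checked against the cusp diagram of Lemma~\ref{lem:bb-extensions}.
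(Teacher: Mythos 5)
Your proposal is correct and follows essentially the same route as the paper: the paper's entire argument is the one-line observation that both $1$-cusps and symmetric maximal parabolic subdiagrams correspond to isotropic planes contained in the involution-fixed sublattice $\ten\subset\tdp$ (equivalently, isotropic lines of $\oT_\enr$ in the chamber $\ch^J$), followed by the enumeration in Figures~\ref{fig-par2}, \ref{fig-pars} and a cross-check against Sterk. Your treatment of cusps 3 and 5 — that $w_{\alpha}(\lambda)=\lambda$ forces $\lambda\cdot\alpha=0$, so the reflecting roots must lie in the subdiagram — is exactly the mechanism the paper invokes (see the Cusp 3 case in the proof of Proposition~\ref{kulikov-prop}), just spelled out in more detail than the paper bothers to.
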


Indeed, both correspond to the isotropic planes contained in the
sublattice of $T$ fixed by the involution.
We list these $1$-cusps in
Figures~\ref{fig-par2} and \ref{fig-pars}. They agree with Sterk's
computations in \cite{sterk1991compactifications-enriques1}, and the
entire cusp diagram agrees with Figure~\ref{fig-sterk-allcusps}. 

\begin{figure}[htpb]
  \centering
  \includegraphics[width=330pt]{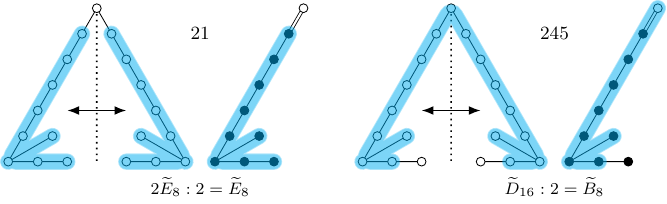}
  \caption{$1$-cusps of $\fen$ passing through $0$-cusp 2}
  \label{fig-par2}
\end{figure}

\begin{figure}[htpb]
  \centering
  \includegraphics[width=360pt]{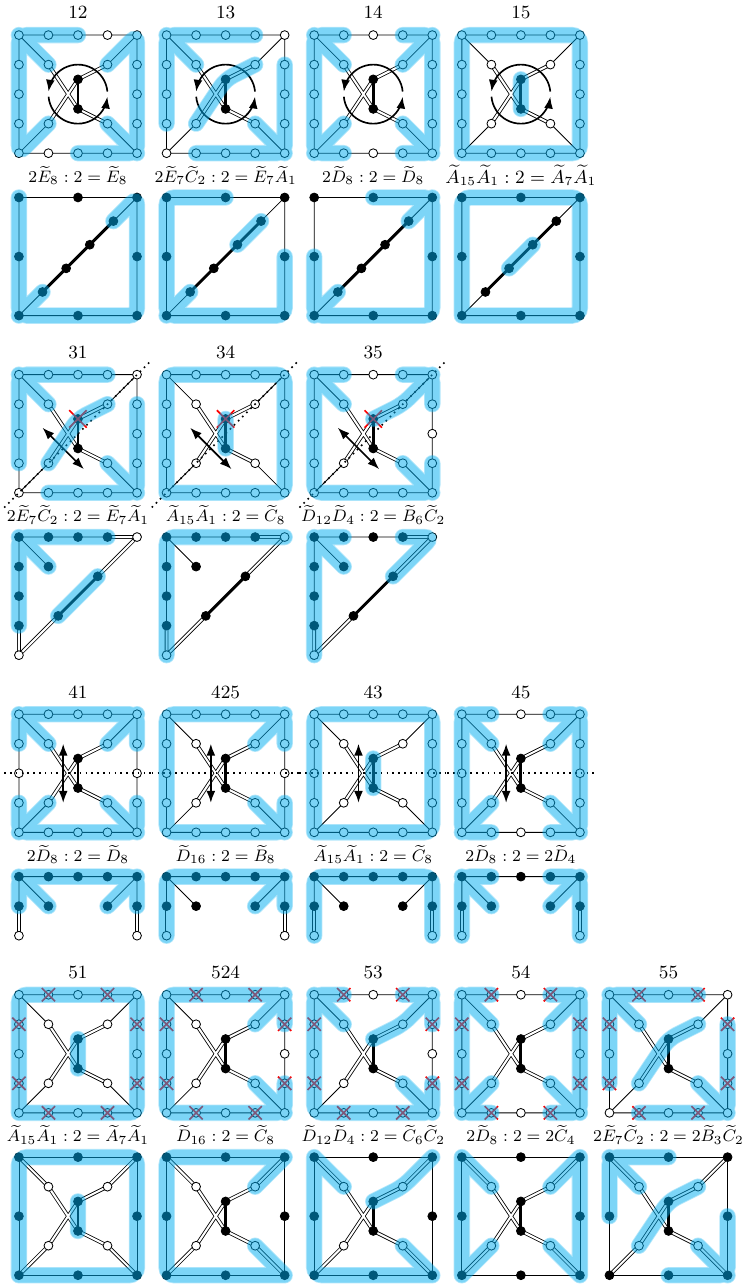} 
  \caption{$1$-cusps of $\fen$ passing through $0$-cusps 1, 3,
    4, 5}
  \label{fig-pars}
\end{figure}

Figures \ref{fig-par2}, \ref{fig-pars} contain the information of all
cusp incidences 
of $\ofen\ubb$. The figures are read as follows: the first numeral indicates
one of the five folding symmetries $1,2,3,4,5$ of the relevant hyperbolic lattice $\oT_\dP$,
and this symmetry is also depicted on the Coxeter diagram for $\oT_\dP$, with an $\times$
indicating that we reflect in the corresponding root. These correspond to the five $0$-cusps
added to $\fen$.

In blue is highlighted a 
maximal parabolic subdiagram invariant under the given folding symmetry. Necessarily,
all $\times$-ed vertices are contained in this diagram, since only these
diagrams can be invariant under the corresponding composition of root reflections.
Such blue diagrams are in bijection with the $1$-cusps incident upon the corresponding $0$-cusp.
The collection of all numerals, including the first label, indicate the corresponding $1$-cusp,
see Notation \ref{1-cusp-notation}.

Finally, adjacent to each maximal parabolic diagram for $\oT_\dP$ is the corresponding 
maximal parabolic subdiagram of the folded lattice $\oT_\enr$.

\begin{remark}
  The $1$-cusps $E_8D_8$ and $D_{16}$ of $\fhyp$ do not appear as
  images of $1$-cusps of $\fen$. The reason is now clear: these are
  exactly the two of the eight $1$-cusps of $\fhyp$ for which the
  parabolic subdiagrams, that can be found e.g. in
  \cite{alexeev2022mirror-symmetric}, are not symmetric with
  respect to any of the four involutions in
  Lemma~\ref{lem:cusps1345}.
\end{remark}

\begin{remark}
  The idea that folded diagrams may be relevant to compactifying
  $\fen$ implicitly appears in
  \cite{sterk1991compactifications-enriques1}, e.g. there is a folded
  $A_{15}$ diagram in Fig.~16.  We found that once the K3 case
  understood, the folding, when applied to the correct space---which
  is $\fhyp$ and not $F_4$---completely solves the Enriques case.
  Note that the moduli space $F_4$ of quartic K3 surfaces has a unique
  $0$-cusp with a non-reflective hyperbolic lattice, but the moduli
  space $\fhyp$ of hyperelliptic K3 surfaces of degree~$4$ has two
  $0$-cusps with reflective hyperbolic lattices; see
  Figure~\ref{fig-k3-cusps}.
\end{remark}

\section{Dlt models via integral-affine structures on the disk and $\bR\bP^2$}
\label{sec:ias}

\subsection{General theory}
\label{sec:IAS-general}

The general theory of integral affine spheres, $\ias$ for short, in
the form that we need it here is detailed in
\cite{engel2018looijenga, engel2021smoothings, alexeev2022compactifications-moduli,
alexeev2023stable-pair,
  alexeev2022mirror-symmetric}. We refer
the reader to the above papers for the necessary background, and give a broad summary now.

A {\it Kulikov model} is a $K$-trivial semistable model $\cX\to (C,0)$
of a degeneration of K3 surfaces over a pointed curve \cite{kulikov1977degenerations-of-k3-surfaces},
\cite{persson1981degeneration-of-surfaces}. For {\it Type III degenerations},
the dual complex $\Gamma(\cX_0)$ of the central fiber
is the $2$-sphere $S^2$, and for {\it Type II degenerations} $\Gamma(\cX_0)$ is a segment.
By \cite[Rem.~1.1v1]{gross2015mirror-symmetry-for-log}, \cite[Prop.~3.10]{engel2018looijenga}
there is a natural integral-affine structure on $\Gamma(\cX_0)$, with singularities.
The correct notion of singularities is detailed in \cite[Sec.~5]{alexeev2022compactifications-moduli}.

Fixing one Kulikov model $\cX\to (C,0)$, we get Kulikov models for all other degenerations
with the same Picard-Lefschetz transform, of the same
combinatorial type \cite[Lem.~5.6]{friedman1986type-III}, \cite[Def.~7.14]{alexeev2023compact}
by deforming the gluings and moduli of components.
We can extract the KSBA
stable limit of a degeneration $(\cX^*,\epsilon \cR^*)$ of K3 pairs, if
we can describe the {\it integral-affine polarization} $R_{\rm IA}\subset \Gamma(\cX_0)$,
a certain weighted balanced graph \cite[Def.~5.17]{alexeev2022compactifications-moduli}.
This weighted graph encodes the line bundle $\cO_{\cX_0}(\cR_0)$ on a {\it divisor model} $(\cX,\cR)$:
a Kulikov model  which admits a nef extension of $\cR_t$, $t\in C\setminus 0$,
containing no singular strata of $\cX_0$ \cite[Thm.~3.12]{alexeev2023stable-pair},
\cite[Thm.~2.11]{laza2016ksba-compactification}.

By \cite[Thm.~3.24]{alexeev2021nonsymplectic}, our chosen divisor $R$, as the fixed
locus of an automorphism $\iota_\dP$ on a general Enriques K3 surface, is {\it recognizable},
see \cite[Def.~6.2]{alexeev2023compact}. By the main theorem on recognizable divisors 
\cite[Thm.~1]{alexeev2023compact}, there is a unique semifan $\fF_R$ whose corresponding 
semitoroidal compactification  \cite{looijenga2003compactifications-defined2}, \cite[Sec.~5C]{alexeev2023compact} 
normalizes the KSBA compactification of $\fen$.
By \cite[Thm.~8.11(5)]{alexeev2023compact}, 
$(\Gamma(\cX_0),R_{\rm IA})$
can be chosen to be
 the same for all degenerations with fixed Picard-Lefschetz transform.

In turn, the combinatorial data of $(\Gamma(\cX_0),R_{\rm IA})$
determines the combinatorial type of the KSBA stable limit of the degeneration $(\overline{\cX}_0 ,\epsilon \overline{\cR}_0)$
by \cite[Cor.~8.13]{alexeev2023compact}. Then \cite[Thm.~9.3]{alexeev2023compact} gives an algorithm 
to determine $\fF_R$: Its cones are given by collections of Picard-Lefschetz transformations
for which $(\Gamma(\cX_0),R_{\rm IA})$ determines a KSBA-stable pair of a fixed combinatorial type.
This is the natural notion of combinatorial constancy of such pairs.

The possible Picard-Lefschetz transformations of Kulikov degenerations in $\ofen$
are encoded by a vector $\lambda\in \ch^J$ called the {\it monodromy invariant}. It is valued
in the fundamental chamber $\ch^J$ for one of the five folded diagrams $G^J=G(\oT_{\dP})^J$
of Figures \ref{fig-cusp2}, \ref{fig-cusps1345}
as in Lemma~\ref{lem:all-diagrams-are-folded}
because $\lambda$ must
be invariant under the involution $J$ on $\oT_\dP$.
An algorithm (albeit a complicated one), is provided
in \cite[Thm.~8.3]{alexeev2022mirror-symmetric} to build 
$(\Gamma(\cX_0), R_{\rm IA})$ for all monodromy invariants $\lambda\in \ch$
in the fundamental chamber for the Weyl
group action, for either hyperbolic lattice $\oT_\dP=(18,2,0)_1$ or $\oT_\dP = (18,0,0)_1$
corresponding to a $0$-cusp of $F_{(2,2,0)}$.

Restricting $\ias$ for $F_{(2,2,0)}$ to the involution-invariant 
sublattice $\oT=\oT_{\dP}^{J=1}$ exhibits a polarized $\ias$ for any
Type III degeneration in $\ofen$. Then, one can hope (and it is indeed the case,
as shown below), that on these subloci, the corresponding divisor models $(\cX,\cR)$
admit a second involution identified with the limit of the Enriques involution.
Thus, these polarized $\ias$ will provide a method to compute the 
Kulikov and KSBA-stable models of all degenerations of both the Enriques surfaces
and their corresponding double covers, the Enriques K3 surfaces.

\begin{definition} Let $\lambda\in \ch$, for the one of the two $0$-cusps of $F_{(2,2,0)}$. 
We define $$\ell := (\ell_i)_{i\in G} = (\lambda\cdot \alpha_i)_{i\in G}$$ where $\alpha_i$
are the roots of either diagram in Figure \ref{fig-k3-cusps}. Thus, $\ell\in (\bZ_{\geq 0})^{22}$
for the cusp $(18,2,0)_1$ and $\ell\in (\bZ_{\geq 0})^{19}$ for the cusp $(18,0,0)_1$.
 \end{definition}

\subsection{$\ias$ for $\fhyp$}
\label{sec:ias-hyp}

We now identify the polarized $\ias$ for degenerations in $F_{(2,2,0)}$ following the instructions
of \cite[Thms.~7.4,~8.3]{alexeev2022mirror-symmetric}. We treat each of the two $0$-cusps individually.

\medskip

{\bf Cusp $(18,2,0)_1$:} We are to first take a K3 surface $\hX$ in the mirror moduli space
for this $0$-cusp---these are $U(2)\oplus E_8^{\oplus 2}$-polarized K3 surfaces. Then we are to consider the
anticanonical pair quotient $$(\hY,\hD) := \hX/\,\widehat{\iota}_\dP$$ by the mirror involution and, 
for each $\hL$ in the nef cone of $\hY$, we must build a Symington polytope $P(\ell)$ for the line bundle $\hL\to (\hY,\hD)$
corresponding to $\ell$, see \cite{symington2003four-dimensions}, \cite[Construction 6.16]{alexeev2022mirror-symmetric}.
We build a sphere $B(\ell) = P(\ell)\cup P(\ell)^{\rm opp}$ 
by identifying two copies of this integral-affine disk
along their common boundary, to form the equator of the sphere. Then $B(\ell)=\Gamma(\cX_0)$ 
for a monodromy-invariant $\lambda\leftrightarrow \ell\leftrightarrow \hL$ and the integral-affine polarization 
$R_{\rm IA}$ corresponding to the flat limit $\cR_0\subset \cX_0$ is the equator of the sphere,
with weights alternating $2$ and $1$.

The anticanonical pair $(\hY,\hD)$ is a rational elliptic surface with an anticanonical cycle of $16$ curves,
of alternating self-intersections $-1$ and $-4$, which result from blowing up the corners of an $I_8$ Kodaira fiber.
This pair admits a toric model $$(\hY,\hD)\to (\overline{\hY},\overline{\hD})$$ whose fan is depicted
on the left-hand side of Figure \ref{fig-maxdegs}. The rays going to the four corners correspond to components of the 
toric model receiving an {\it internal blow-up}, i.e.~a blow-up at a smooth point
of the anticanonical boundary.

A moment polytope $\oP(\ell)$ for the line bundle
$\overline{\hL}\to  (\overline{\hY},\overline{\hD})$ is depicted on the left of Figure \ref{ias-for-220} and a 
Symington polytope $P(\ell)$ for the line bundle $\hL\to (\hY,\hD)$ corresponding to $\ell$ is depicted on
the right of Figure \ref{ias-for-220}. The right hand-side also serves as a visualization of
each hemisphere of the integral-affine sphere $B(\ell)=\Gamma(\cX_0)$, with the equator 
in blue and integral-affine singularities in red. The quantities $\ell_{20}$ and $\ell_{21}$
are, respectively, twice the lattice length between the singularities introduced by Symington
surgeries on opposite sides of the figure.

\begin{figure}
\centering
  \includegraphics[width=360pt]{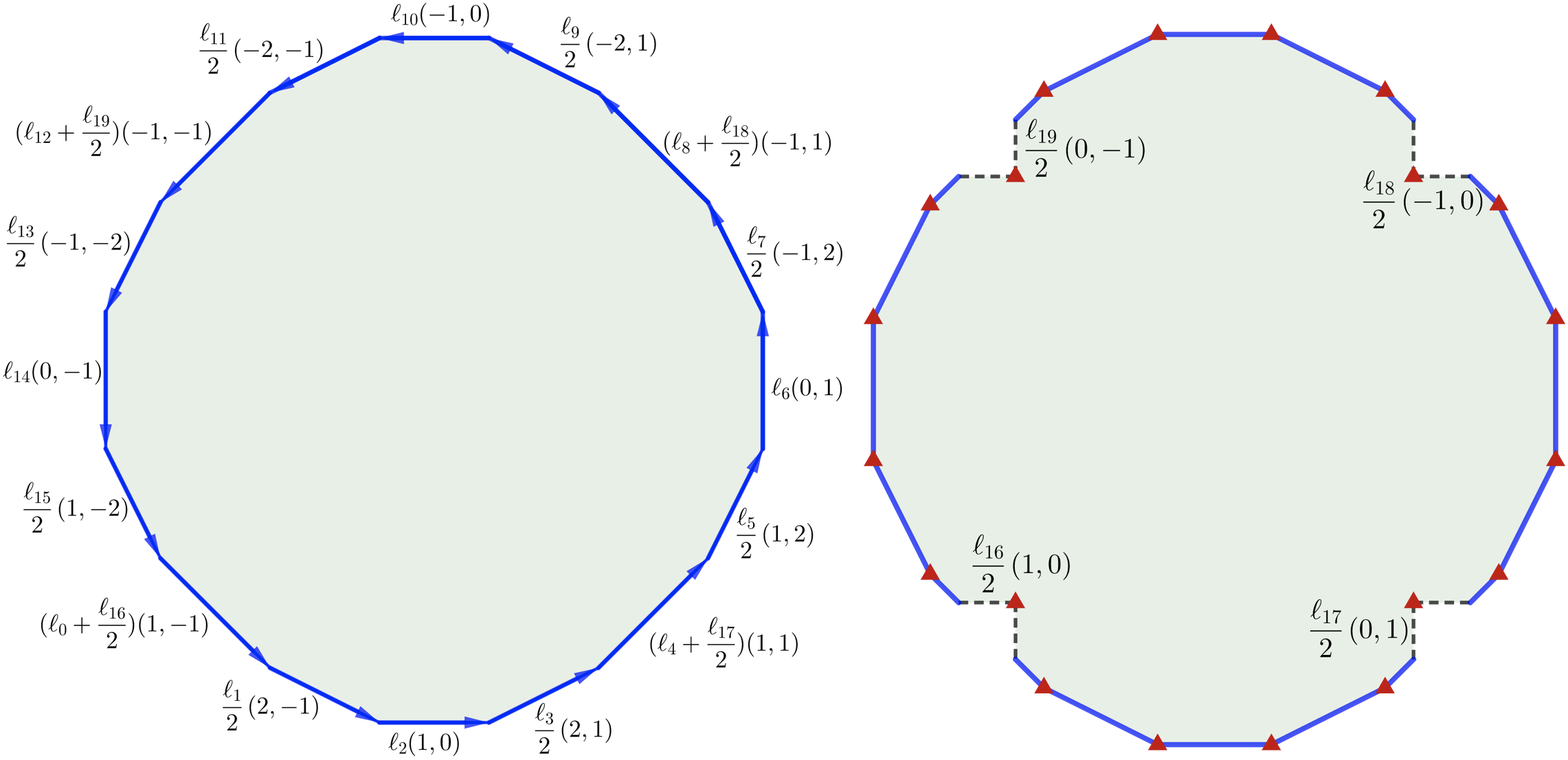} 
  \caption{Moment and Symington polytopes for cusp $(18,2,0)_1$}
  \label{ias-for-220}
  
 \bigskip 
  
  \includegraphics[width=260pt]{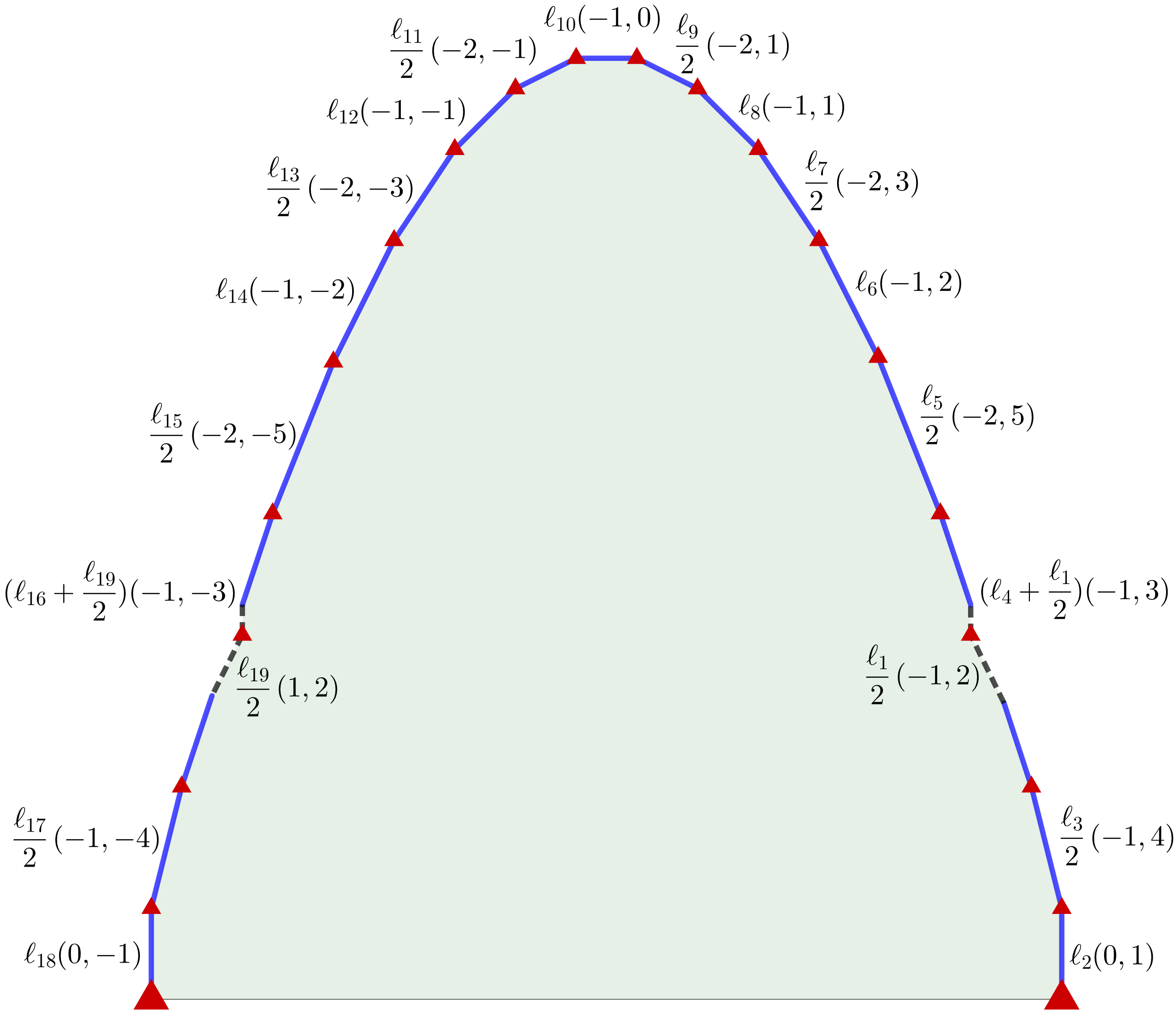} 
  \caption{Symington polytope for cusp $(18,0,0)_1$}
  \label{ias-for-200}
\end{figure}

\medskip

{\bf Cusp $(18,0,0)_1$:} The procedure for constructing polarized $\ias$ at this cusp
is essentially the same as the above, instead taking the mirror moduli space to be $U\oplus E_8^{\oplus 2}$-polarized.
The fan of a toric model of the mirror is provided by the right hand side of Figure \ref{fig-maxdegs}.
The integral-affine structures are similar to those depicted in \cite[Fig.~4]{alexeev2022compactifications-moduli}, with an important difference: 
The cusp $(18,0,0)_1$ corresponds to a non-simple mirror
of $F_{(2,2,0)}$. This means that the integral-affine polarization 
$R_{\rm IA}\subset B(\ell)$ has no support on 
the bottom edge of the Symington polytope $P$, and $\cR_0$
is empty on the corresponding components of $\cX_0$. 
See the discussion of a ``B-move" in \cite[Sec.~8D]{alexeev2022mirror-symmetric}
for further details.

The $\ias$ we need is the result of taking the $\ias$
of \cite[Fig.~4]{alexeev2022compactifications-moduli}, splitting
the $I_2$ singularity at the bottom into two $I_1$ singularities traveling
in opposite directions, and colliding
each one with a corner.
This produces singularities in the bottom left and right corners
of charge $2$, depicted with a larger red triangle, see Figure \ref{ias-for-200}.

\medskip

\begin{remark} Note that in both cases, certain coordinates of $\ell$ must be divisible by $2$ to build
the polarized $\ias$. This does not present
an issue, since we only need divisor models for all sufficiently divisible $\lambda$. \end{remark}

\begin{remark} For the cusp $(18,0,0)_1$, the polygon $\oP(\ell)$ in Figure \ref{ias-for-200} 
can be closed by a horizontal base exactly because of relation (\ref{close-up}). The same holds
for cusp $(18,2,0)_1$ with relation (\ref{eq:UE82}) and other similar relations. \end{remark}

To summarize, by \cite[Thm.~8.3]{alexeev2022mirror-symmetric}, we have:

 \begin{theorem}\label{dP-kulikovs} Let $(B(\ell),\,R_{\rm IA})$ be the polarized
 $\ias$ built above, from $\ell\in (\bZ_{\geq 0})^{22}$ or $(\bZ_{\geq 0})^{19}$. Then, upon triangulation
 into lattice simplices, $$(B(\ell),R_{\rm IA})=
 (\Gamma(\cX_0),\Gamma(\cR_0))$$ is the dual complex of the central fiber $(\cX_0,\cR_0)$
 of a divisor model $(\cX,\cR)\to (C,0)$, whose monodromy invariant $\lambda\in \ch$ satisfies
 $\ell = (\lambda\cdot \alpha_i)_{i\in G}$. \end{theorem}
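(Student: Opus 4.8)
The plan is to reduce the theorem directly to the cited machinery of \cite{alexeev2022mirror-symmetric}, treating the present section as an instantiation of that general algorithm rather than proving anything from scratch. The statement to be established is that, after a lattice triangulation, the pair $(B(\ell), R_{\rm IA})$ constructed by hand in this section coincides with the dual complex of the central fibre of a divisor model whose monodromy invariant $\lambda$ has intersection numbers $\ell=(\lambda\cdot\alpha_i)_{i\in G}$. The key point is that \cite[Thm.~8.3]{alexeev2022mirror-symmetric} already asserts exactly this for $F_{(2,2,0)}$, provided we feed it the correct input data: the mirror moduli space, its anticanonical-pair quotient, and the Symington polytope $P(\ell)$ attached to a nef line bundle $\hL$ on $(\hY,\hD)$ corresponding to $\ell$.

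\textbf{Key steps.} First I would verify, for each of the two $0$-cusps $(18,2,0)_1$ and $(18,0,0)_1$, that the mirror moduli space named in Section~\ref{sec:ias-hyp} is indeed the correct mirror in the sense of Dolgachev--Nikulin--Pinkham mirror symmetry: for $(18,2,0)_1$ this is the lattice-mirror computation giving $U(2)\oplus E_8^{\oplus 2}$-polarized K3s, and for $(18,0,0)_1$ the $U\oplus E_8^{\oplus 2}$-polarized family, together with the observation (made in the text) that the latter is a \emph{non-simple} mirror, which is why $R_{\rm IA}$ acquires no support on the bottom edge. Second, I would confirm that the anticanonical pair $(\hY,\hD)$ is the one described---a rational elliptic surface with an $I_{16}$-type cycle of self-intersections alternating $-1,-4$ obtained from an $I_8$ Kodaira fibre by corner blow-ups---and that its toric model has the fan drawn in Figure~\ref{fig-maxdegs}, with the four corner rays being exactly the components receiving internal blow-ups. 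Third, I would check that the moment polytope $\oP(\ell)$ and the Symington polytope $P(\ell)$ in Figures~\ref{ias-for-220} and~\ref{ias-for-200} are the correct outputs of \cite[Construction~6.16]{alexeev2022mirror-symmetric} applied to $\hL$ with $\ell_i = \lambda\cdot\alpha_i$; in particular I would check that the parameters $\ell_{20},\ell_{21}$ really record twice the lattice lengths between the Symington singularities on opposite sides, and that the ``$B$-move'' / $I_2$-to-$2I_1$-splitting performed for the $(18,0,0)_1$ cusp matches the prescription of \cite[Sec.~8D]{alexeev2022mirror-symmetric}. Finally, gluing $B(\ell) = P(\ell)\cup P(\ell)^{\rm opp}$ along the common boundary and invoking \cite[Thm.~8.3]{alexeev2022mirror-symmetric} verbatim yields that $(B(\ell),R_{\rm IA})$, once triangulated into lattice simplices, is $(\Gamma(\cX_0),\Gamma(\cR_0))$ for a divisor model with the stated monodromy invariant.

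\textbf{Remaining checks.} Two minor points need a remark rather than a full argument: that the integrality hypotheses on $\lambda$ (certain $\ell_i$ even) are harmless because divisor models are only needed for sufficiently divisible $\lambda$, and that the polygons $\oP(\ell)$ close up along a horizontal base precisely because of the lattice relations~\eqref{close-up} and~\eqref{eq:UE82}; both are already noted in the surrounding remarks and simply need to be cross-referenced inside the proof.

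\textbf{Main obstacle.} The genuine work---and the place where an error could hide---is the identification of the hand-drawn Symington polytopes with the canonical output of the general construction: one must match the combinatorics of the internal blow-ups on $(\hY,\hD)$, the Symington surgeries, and the resulting integral-affine singularities (including the charge-$2$ corner singularities in the $(18,0,0)_1$ case) against the roots $\alpha_i$ of the Coxeter diagram so that $\ell_i = \lambda\cdot\alpha_i$ holds on the nose. Once that dictionary is pinned down, the theorem is a direct citation; everything else is bookkeeping.
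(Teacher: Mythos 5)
Your proposal is correct and follows essentially the same route as the paper: the paper presents this theorem as a direct summary of the constructions in Section~\ref{sec:ias-hyp} combined with a verbatim application of \cite[Thm.~8.3]{alexeev2022mirror-symmetric}, which is precisely your reduction. The checks you flag (the mirror identification at each $0$-cusp, the matching of the Symington polytopes to \cite[Construction~6.16]{alexeev2022mirror-symmetric}, the divisibility caveat, and the closing-up of $\oP(\ell)$ via the relations~\eqref{close-up} and~\eqref{eq:UE82}) are exactly the points the paper handles in the surrounding text and remarks.
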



\subsection{${\rm IA}\bD^2$ and ${\rm IA}\bR\bP^2$ for $\fen$}
\label{sec:ias-ent}

Now suppose that $(\cX,\cR)\to (C,0)$ is a Type III divisor model as in Theorem \ref{dP-kulikovs}, whose period
map $C^*\to F_{(2,2,0)}$ factors through $\fen$. Then, the general fiber $\cX_t$ is an Enriques K3 surface
with degree $4$ polarization, and $(\cX,\cR)\to (C,0)$ is a divisor model
for the degeneration. 
The quotient $$\cX^*/\iota_{\enr}^* = \cZ^*\to C^*$$ of the general fiber
by the Enriques involution is a degenerating family of Enriques surfaces.
The monodromy invariant $\lambda\in \ch^J$ then necessarily lies in the fundamental chamber
for one of the five $0$-cusps of $\ofen$. Equivalently, $\ell$ must be 
invariant under one of the five folding symmetries.

\begin{proposition}\label{kulikov-prop} Let $\lambda\in \ch^J$, $\ell = (\lambda\cdot \alpha_i)_{i\in G}$.
The folding symmetry $J$ on $\oT_\dP$ induces an isomorphism 
 $\iota_{\enr, {\rm IA}}$ of the polarized $\ias$ $(B(\ell),\,R_{\rm IA})$ of Theorem \ref{dP-kulikovs}.
 The dual complexes $\Gamma(\cX_0,\cR_0)$ of divisor models for Enriques
 K3 surface degenerations in $\fen$ are exactly those admitting the additional symmetry $\iota_{\enr, {\rm IA}}$
 (appropriately interpreted for $\times$-ed nodes in cusps 3, 5).
 \end{proposition}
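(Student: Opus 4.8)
The plan is to transport the lattice-theoretic involution $J$ on $\oT_\dP$ through the explicit construction of the polarized $\ias$ and show it acts as a symmetry. First I would note that by Theorem~\ref{dP-kulikovs} the polarized $\ias$ $(B(\ell), R_{\rm IA})$ depends only on the vector $\ell = (\lambda\cdot\alpha_i)_{i\in G}$, and by hypothesis $\lambda\in\ch^J$, so $\ell$ is invariant under the permutation of the index set $G$ induced by $J$ (for cusps~3 and~5, after accounting for the extra root reflections, which act trivially on the $\times$-ed coordinates as explained in Lemma~\ref{lem:cusps1345} and the accompanying figures). The combinatorial recipe for $B(\ell)$ — the fan of the toric model in Figure~\ref{fig-maxdegs}, the moment polytope $\oP(\ell)$, the Symington surgeries producing the singularities whose positions are governed by $\ell_{20}, \ell_{21}$, and the doubling $B(\ell) = P(\ell)\cup P(\ell)^{\rm opp}$ along the equator — is entirely determined by $\ell$ together with combinatorial data (the toric fan, the cyclic structure of the anticanonical cycle $\hD$) that carries a manifest symmetry matching each of the five $J$'s. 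So I would check case by case that the symmetry of the fan/polytope picture realizing $J$ extends over the Symington surgeries and the equatorial gluing to a piecewise-integral-affine automorphism $\iota_{\enr,{\rm IA}}$ of $B(\ell)$ preserving $R_{\rm IA}$ with its alternating weights $2,1$; this is where the bulk of the (routine but case-specific) verification lives.

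Next, to identify $\iota_{\enr,{\rm IA}}$ with the limit of the Enriques involution, I would invoke the general theory summarized in Section~\ref{sec:IAS-general}: the dual complex $\Gamma(\cX_0)$ carries its integral-affine structure functorially from the Kulikov model, and an automorphism of the divisor model $(\cX,\cR)$ acts on $\Gamma(\cX_0)$ by an integral-affine automorphism preserving $R_{\rm IA}$. Conversely, by the constancy statements \cite[Thm.~8.11(5), Cor.~8.13]{alexeev2023compact} the combinatorial pair $(\Gamma(\cX_0), R_{\rm IA})$ is the same for all degenerations with fixed Picard–Lefschetz transform, i.e. fixed monodromy invariant $\lambda$. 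Since $\lambda\in\ch^J$ is $J$-invariant, the Picard–Lefschetz transform commutes with the lift $I_\enr$ of $J$ to $\tdp$ (Lemma~\ref{lem:lift}), so after a base change the Enriques involution $\iota_{\enr,t}$ extends to $\cX_0$ and induces an automorphism of the divisor model; its action on $\Gamma(\cX_0)$ is an integral-affine symmetry preserving $R_{\rm IA}$ and inducing $J$ on $\oT_\dP \cong H^2$ of a component's mixed Hodge structure (equivalently, on the monodromy-invariant lattice). Uniqueness of such a symmetry — which follows because an integral-affine automorphism of $B(\ell)$ is determined by its linear action near a generic point, hence by $J$ — forces it to coincide with the $\iota_{\enr,{\rm IA}}$ constructed combinatorially.

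For the final clause, that the divisor models of Enriques K3 degenerations in $\fen$ are \emph{exactly} those admitting $\iota_{\enr,{\rm IA}}$: one inclusion is the above. For the reverse, if $(B(\ell), R_{\rm IA})$ admits the symmetry $\iota_{\enr,{\rm IA}}$ realizing one of the five foldings, then by Lemma~\ref{lem:all-diagrams-are-folded} the monodromy invariant $\lambda$ lies in $\ch^J$, hence is $J$-invariant, hence by Lemma~\ref{lem:lift} the Picard–Lefschetz transform centralizes $I_\enr$; this is precisely the condition for the family to carry a fiberwise Enriques involution (the period point lies in $\bD(\ten)\subset\bD(\tdp)$ near the cusp), i.e. for the period map $C^*\to F_{(2,2,0)}$ to factor through $\fen$. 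The main obstacle I anticipate is the case analysis for cusps~3 and~5, where $J$ is not a pure diagram symmetry but a diagram symmetry composed with reflections in the $\times$-ed roots: one must check that these reflections, which modify the chamber $\ch$ to $\ch^J$, are compatible with the Symington-polytope construction — concretely, that reflecting in $\alpha_{20}$ (resp. $\alpha_1,\alpha_3,\dots,\alpha_{15}$) corresponds to a legitimate integral-affine symmetry of $B(\ell)$ rather than merely a symmetry of the abstract Coxeter data, and that the weighted graph $R_{\rm IA}$ is preserved. This is bookkeeping rather than a conceptual difficulty, but it is the step most likely to require care with orientations and the placement of integral-affine singularities.
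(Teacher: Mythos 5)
Your first step---checking case by case that $J$-invariance of $\ell$ forces a visible integral-affine symmetry of $(B(\ell),R_{\rm IA})$ compatible with the Symington surgeries and the equatorial gluing---is exactly how the paper begins, and your anticipation that cusps 3 and 5 need special treatment (the reflections in the $\times$-ed roots forcing $\ell_{20}=0$, resp.\ $\ell_{2i+1}=0$, and hence coincidences of singularities) is on target.

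The gap is in your second step, the identification of $\iota_{\enr,{\rm IA}}$ with the Enriques involution and the ``exactly'' claim. You argue: $\lambda\in\ch^J$ implies the Picard--Lefschetz transform centralizes $I_\enr$, hence ``the Enriques involution $\iota_{\enr,t}$ extends to $\cX_0$ and induces an automorphism of the divisor model.'' This is not available at this stage: $J$-invariance of the monodromy is necessary but not sufficient for the family to consist of Enriques K3s (the period point must also lie in $\bD(\ten)$, equivalently the $S_\enr$-polarization must be preserved), and even for a genuine Enriques degeneration the involution is a priori only birational on a given Kulikov model. In the paper the extension of $\iota_\enr$ to the total space is Theorem~\ref{enr-kul}, which is proved \emph{after} and \emph{using} Proposition~\ref{kulikov-prop} (one needs $\iota_{\enr,{\rm IA}}$ to formulate anti-invariance of the period point and of the gluing data). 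So your argument is circular at this point, and it also does not establish the realizability direction: that every symmetric $(B(\ell),R_{\rm IA})$ actually occurs as $\Gamma(\cX_0,\cR_0)$ for a degeneration with monodromy invariant in $\oT_\dP^{J=1}$. The paper's mechanism for this is mirror-symmetric and is the real content of the proof: $B(\ell)$ is the base of a Lagrangian torus fibration $\mu\colon(\hX,\omega)\to B(\ell)$ on the mirror, $\Pic(\hX)\simeq\oT_\dP$ is generated by classes of visible curves joining $I_1$-singularities with parallel $\SL_2(\bZ)$-monodromies, the symmetry $\iota_{\enr,{\rm IA}}$ permutes these classes by $J$, and the Mirror/Monodromy Theorem then identifies $B(\ell)$ as the dual complex of a degeneration with the required monodromy invariant. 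For cusps 3 and 5 the condition $[\omega]\cdot\alpha_i=0$ for $\times$-ed nodes, realized by nodal slides colliding two $I_1$'s into an $I_2$, is precisely what makes this work there; this is not mere bookkeeping but the place where the reflection part of $J$ enters the geometry. Finally, your uniqueness claim (``an integral-affine automorphism of $B(\ell)$ is determined by its linear action near a generic point, hence by $J$'') needs justification: the action of $J$ on the monodromy lattice does not obviously determine the germ of an automorphism at a point of $B(\ell)$.
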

 
 \begin{proof} For each $0$-cusp, we directly inspect the $\ias$ for the parameters
 $\ell$ corresponding to $\lambda\in \ch^J$ and see that there is an additional symmetry
of $B(\ell)$.
 
 \smallskip
{\bf Cusp 2:} We have $\lambda\in \ch^J$ if and only if $\ell_i=\ell_{20-i}$ for all $i=1,\dots,9$.
The $\ias$ in Figure \ref{ias-for-200} then has a visible symmetry, which is to act on the
both the hemisphere $P$, and its opposite hemisphere $P^{\rm opp}$, by a flip across the vertical line
bisecting the bottom and top edges.

 \smallskip
{\bf Cusp 1:} We have $\lambda\in \ch^J$ if and only if $\ell_i=\ell_{8+i}$ for all $i=0,\dots, 7$,
$\ell_{16}=\ell_{18}$ and $\ell_{17}=\ell_{19}$. Then the corresponding involution  $\iota_{\enr, {\rm IA}}$
of the $\ias$ is to rotate each hemisphere, shown in Figure \ref{ias-for-220},
by $180$ degrees, and then flip the two hemispheres $P$
and $P^{\rm opp}$.

 \smallskip
{\bf Cusp 3:}  We have $\lambda\in \ch^J$ if and only if $\ell_i=\ell_{16-i}$ for all $i=1,\dots, 7$,
$\ell_{17}=\ell_{19}$, and
$\ell_{20}=0$. This is because the folding symmetry also reflects in the root $\alpha_{20}$. 
So if $w_{\alpha_{20}}(\lambda)=\lambda$, then $\ell_{20}=\lambda\cdot \alpha_{20}=0$.

Recall that $\ell_{20}$ is the lattice distance between the singularities introduced by the Symington surgeries
resting on the edges parallel to $(1,-1)$ and $(-1,1)$, 
on the right-hand side of Figure \ref{ias-for-220}. We construct
$B(\ell)$ in such a way that the two singularities introduced by these Symington surgeries coincide. 
The involution $\iota_{\enr,{\rm IA}}$
of the $\ias$ acts by flipping each hemisphere $P$, $P^{\rm opp}$ diagonally.

 \smallskip
{\bf Cusp 4:} Similar to Cusp 2, we have $\lambda\in \ch^J$ if and only if each hemisphere
of $B(\ell)$ is symmetric with respect to flipping along a horizontal line 
bisecting the edges $\ell_6(0,1)$ and $\ell_{14}(0,-1)$.

 \smallskip
{\bf Cusp 5:} We have $\lambda\in \ch^J$ if and only if $\ell_{2i+1}=0$ for $i=0,\dots,7$. We declare
that $\iota_{\enr,{\rm IA}}$ act in the same manner as the extension of $\iota_{\dP}$ to $\cX_0$: It
flips the two hemispheres $P$ and $P^{\rm opp}$. The eight $\times$-ed nodes 
correspond to eight collisions of pairs of $I_1$ singularities along the equator.

\smallskip

By \cite[Prop.~6.17]{alexeev2022mirror-symmetric}, the mirror K3 surface $\hX$ admits a symplectic
form $\omega$ and Lagrangian torus fibration $$\mu\colon (\hX,\omega) \to B(\ell),$$ 
for generic $\ell\in \ch^J$. Note that while some of the $24$ $I_1$-singularities collide on $B(\ell)$
for Cusps 3 and 5, we only ever get, for generic $\ell$,
a collision of two $I_1$-singularities with parallel $\SL_2(\bZ)$-monodromies. So the 
fibration $\mu$ still exists, but has $I_2$ fibers over these collisions.

The involution $\iota_{\enr, {\rm IA}}$ acting on $B(\ell)$ induces an involution of the Lagrangian
torus fibration $(\hX,\mu)$ 
and in turn on ${\rm Pic}(\hX)\simeq \oT_\dP=(18,2,0)_1$ or $(18,0,0)_1$ which is generated by classes
of visible curves, cf.~\cite[Sec.~6G]{alexeev2022mirror-symmetric}. In the current setting, the visible
curves (which correspond to the roots $\alpha_i$)
are all of the following simple form: A path connecting two $I_1$-singularities with parallel 
$\SL_2(\bZ)$-monodromies. For Cusps 1, 2, 4, the involution $\iota_{\enr, {\rm IA}}$ 
acts on the classes of visible curves by the Enriques involution on $\oT_\dP$ and thus, 
by the Mirror/Monodromy Theorem \cite[Prop.~3.14]{engel2021smoothings},
\cite[Thms.~6.19, 7.6]{alexeev2022mirror-symmetric}, $B(\ell)$ is the dual complex
of a degeneration with a monodromy invariant in $\oT_\dP^{J=1}$. 

Some additional care must be taken for Cusps 3 and 5, where $B(\ell)$ is a limit
of $\ias$ with $24$ distinct $I_1$-singularities. For each $\times$-ed node, the
involution $J$ acts on $\oT_\dP$ by reflecting along $\alpha_i$ and so the
class $[\omega]$ of the symplectic form should satisfy $[\omega]\cdot \alpha_i=0$.
Equivalently, there should be a nodal slide, see \cite[Sec.~6E]{alexeev2022mirror-symmetric},
which collides the two $I_1$ singularities of the visible curve corresponding to $\alpha_i$
into an $I_2$ singularity. This is indeed the case for the $\ias$ described above. To summarize,
invariance under reflection of an $\times$-ed node $\alpha_i$ corresponds, on the $\ias$, 
to colliding the two $I_1$ singularities bounding the corresponding visible curve.

We conclude that an $\iota_{\enr, {\rm IA}}$-invariant polarized $\ias$, 
which has a coalescence to an $I_2$-singularity for each $\times$-ed node, is the dual 
complex of a divisor model $(\cX,\cR)\to (C,0)$ for degree $4$ Enriques K3 surfaces
whose monodromy invariant $\lambda\in \ch^J$ is generic. The passage from
the result for generic $\lambda\in \ch^J$ to all $\lambda\in \ch^J$ is a standard trick involving a limit
procedure on the corresponding $\ias$, examining $B(\ell)$ as some $\ell_i\to 0$,
see \cite[Thm.~6.29]{alexeev2023stable-pair}, \cite[Sec.~6G]{alexeev2022mirror-symmetric}.
\end{proof}
 
\begin{theorem}\label{enr-kul} For all $\lambda\in \ch^J$, the general divisor model $(\cX,\cR)\to (C,0)$
with monodromy invariant $\lambda$ admits a second involution $\iota_\enr\colon \cX\to \cX$
extending the Enriques involution on the general fiber, and satisfying $\iota_\enr(\cR)=\cR$. \end{theorem}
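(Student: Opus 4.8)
The plan is to upgrade the purely combinatorial statement of Proposition~\ref{kulikov-prop}---that each folding symmetry $J$ induces an involution $\iota_{\enr,\mathrm{IA}}$ of the polarized $\ias$ $(B(\ell),R_{\rm IA})$---to a geometric statement about the divisor model $(\cX,\cR)\to(C,0)$ itself. The key input is the general theory of recognizable divisors and combinatorial constancy recalled in Section~\ref{sec:IAS-general}: by \cite[Thm.~8.11(5)]{alexeev2023compact} together with \cite[Thm.~8.3]{alexeev2022mirror-symmetric}, the divisor model $(\cX,\cR)$ with a fixed monodromy invariant $\lambda$ is determined, up to the flops that do not affect $\cR$, by the combinatorial data $(\Gamma(\cX_0),R_{\rm IA})$ together with the moduli of the surface components, and one is free to choose these moduli. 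So the strategy is: build $(\cX,\cR)$ starting from an $\iota_{\enr,\mathrm{IA}}$-symmetric choice of everything, so that the symmetry of the base combinatorics propagates to a genuine automorphism.

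First I would fix $\lambda\in\ch^J$ and invoke Proposition~\ref{kulikov-prop} to get the symmetry $\iota_{\enr,\mathrm{IA}}$ of $(B(\ell),R_{\rm IA})$, choosing the triangulation of $B(\ell)$ into lattice simplices to be $\iota_{\enr,\mathrm{IA}}$-invariant (possible since $\iota_{\enr,\mathrm{IA}}$ is a PL integral-affine automorphism of finite order, so one can average/refine an arbitrary triangulation, or build it symmetrically on a fundamental domain and its image). This gives a bijection of the vertices, edges, and triangles of $\Gamma(\cX_0)$, hence an abstract identification of the dual complex with itself. Next I would recall from \cite{alexeev2023stable-pair,alexeev2022mirror-symmetric} that each surface component of $\cX_0$ indexed by a vertex of $\Gamma(\cX_0)$ is a specific ADE (or A/B/C) surface whose moduli are captured by the $\ias$ data near that vertex, and the gluings along double curves are encoded by the edges. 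Since $\iota_{\enr,\mathrm{IA}}$ matches the $\ias$ data near a vertex $v$ with the data near $\iota_{\enr,\mathrm{IA}}(v)$, one can choose the moduli of the components in $\iota_{\enr,\mathrm{IA}}$-paired orbits to agree (and on a $J$-fixed vertex, to be $J$-symmetric), and likewise choose the gluing parameters compatibly. Assembling these component isomorphisms along the matched double curves produces an involution $\iota_\enr$ of $\cX_0$; because $R_{\rm IA}$ is $\iota_{\enr,\mathrm{IA}}$-invariant with its weights, $\iota_\enr$ carries $\cR_0$ to $\cR_0$. Finally, extend $\iota_\enr$ over the total space $\cX$: the divisor model is constructed by deforming the gluings and moduli over $(C,0)$ (\cite[Lem.~5.6]{friedman1986type-III}, \cite[Def.~7.14]{alexeev2023compact}), and the deformation can again be chosen $\iota_{\enr,\mathrm{IA}}$-equivariantly since it is governed by the same combinatorial data; this yields $\iota_\enr\colon\cX\to\cX$ over $C$ restricting to the Enriques involution on the general fiber $\cX_t$ (which we know exists there by hypothesis, $\cX_t$ being an Enriques K3 surface) and satisfying $\iota_\enr(\cR)=\cR$.

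There are two points requiring care. One is verifying that $\iota_\enr$ restricted to the generic fiber really is \emph{the} Enriques involution $\iota_{\enr,t}$ and not some other automorphism: here I would use that the action of $\iota_{\enr,\mathrm{IA}}$ on $\mathrm{Pic}(\hat X)\simeq\oT_\dP$ of the mirror K3, computed via visible curves in Proposition~\ref{kulikov-prop} (and via the Mirror/Monodromy Theorem \cite[Prop.~3.14]{engel2021smoothings}, \cite[Thms.~6.19,7.6]{alexeev2022mirror-symmetric}), realizes precisely the involution $-I_\enr = J$ on $\oT_\dP$ whose $(+1)$-eigenspace is $\oT_\enr$, which is the Hodge-theoretic fingerprint of the Enriques involution; combined with the Torelli theorem for the general Enriques K3 surface this pins down $\iota_\enr|_{\cX_t}$ up to the finitely many choices that all give the same quotient. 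The second, and I expect the main obstacle, is the \textbf{equivariance of the choices over the whole disk}: one must ensure that the moduli of components and gluings---which vary in a family over $(C,0)$---can be chosen $\iota_{\enr,\mathrm{IA}}$-equivariantly \emph{simultaneously} with being a flat limit of a genuine one-parameter degeneration whose general fiber is the given Enriques K3. For generic $\lambda$ the fixed-point loci of $\iota_{\enr,\mathrm{IA}}$ on the relevant moduli/deformation spaces are smooth and of the expected dimension, so this is unobstructed; for special $\lambda$ one passes to the limit as some $\ell_i\to 0$, exactly the degeneration argument already used at the end of Proposition~\ref{kulikov-prop} (cf.~\cite[Thm.~6.29]{alexeev2023stable-pair}, \cite[Sec.~6G]{alexeev2022mirror-symmetric}), and checks that the equivariant structure survives the limit. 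This reduces Theorem~\ref{enr-kul} to Proposition~\ref{kulikov-prop} plus the standard equivariant gluing/deformation package for Kulikov divisor models.
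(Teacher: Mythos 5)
Your proposal follows essentially the same route as the paper: both arguments take the combinatorial symmetry $\iota_{\enr,{\rm IA}}$ of $(B(\ell),R_{\rm IA})$ from Proposition~\ref{kulikov-prop}, realize it geometrically by making the moduli of components and gluings of $\cX_0$ equivariant, and then identify the deformations preserving the resulting involution $\iota_{\enr,0}$ with those preserving the $S_\enr$-polarization, so that the general fiber is an Enriques K3 and the involution extends the Enriques involution.

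Two points where the paper is sharper and your sketch should be adjusted. First, the correct condition on the gluing data is \emph{anti}-invariance of the period point $\varphi_{\cX_0}\in\Hom(\lambda^\perp(\oT_\dP),\bC^*)$ under $\iota_{\enr,{\rm IA}}$, not invariance: because $\iota_{\enr,{\rm IA}}$ reverses orientation on $\Gamma(\cX_0)$, the induced action on the gluing complex carries a sign, and it is the anti-invariant periods that satisfy $E_8(2)=(S_\dP)^\perp_{S_\enr}\subset\ker(\varphi_{\cX_0})$ and hence correspond to $S_\enr$-polarizable (Enriques) smoothings. Your phrase ``choose the moduli of the components in paired orbits to agree'' is ambiguous on exactly this point, and taking it literally as invariance would produce the wrong locus. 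Second, for $\iota_\enr(\cR)=\cR$ the paper does not argue via invariance of $R_{\rm IA}$; instead it notes that $\cR$ is the divisorial fixed locus of the del Pezzo involution $\iota_\dP$ on $\cX$ (by the genericity of the divisor model and \cite[Thm.~8.3]{alexeev2022mirror-symmetric}), and that $\iota_\dP$ and $\iota_\enr$ commute on the general fiber and hence on all of $\cX$, so $\iota_\enr$ preserves $\cR$ automatically. This bypasses the need to match $\cR_0$ component-by-component on the central fiber. With these two adjustments your argument coincides with the paper's.
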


\begin{proof} The proof is essentially the same as \cite[Thm.~8.3]{alexeev2022mirror-symmetric}.
The key point is that the Kulikov models $\cX_0$ which arise as limits of Enriques K3s are those
whose period point $\varphi_{\cX_0}\in {\rm Hom}(\lambda^\perp(\oT_\dP),\bC^*)$ is anti-invariant under $\iota_{\enr, {\rm IA}}$---we require anti-invariance because $\iota_{\enr, {\rm IA}}$
 acts in an orientation reversing manner on $\Gamma(\cX_0)$.

The anti-invariant periods are those $\varphi_{X_0}$ for which
$(S_\dP)^\perp_{S_\enr}= E_8(2)\subset \ker(\varphi_{\cX_0})$,
and the smoothings keeping these
classes Cartier are exactly those admitting an $S_\enr$-polarization
(and hence admitting an Enriques involution). 
Finally, the Kulikov surfaces $\cX_0$ with an anti-invariant period are also identified with those
admitting an additional involution $\iota_{\enr,0}$ because the $\bC^*$-moduli of components and their gluings
are made invariantly with respect to the action of $\iota_{\enr, {\rm IA}}$ on the gluing complex 
\cite[Def.~5.10]{alexeev2023compact}. Furthermore,
the deformations keeping the involution $\iota_{\enr,0}$ 
are then identified with those keeping the $S_\enr$-polarization.

Since the divisor model is generic, \cite[Thm.~8.3]{alexeev2022mirror-symmetric}
implies that $\cR\subset \cX$ is the divisorial component of the fixed locus
of an involution $\iota_{\dP}$ on the threefold $\cX$ extending the del Pezzo
involution on the general fiber. Then $\iota_\dP$ and $\iota_{\enr}$ commute
on the general fiber and hence commute on all of $\cX$. So $\iota_\enr$
preserves $\cR$.
\end{proof}

More generally, every degeneration of Enriques surfaces admits a divisor model $(\cX,\cR)\to (C,0)$
for which $\iota_{\enr}$ defines a birational involution, and for which the union of the fixed locus
and the locus of indeterminacy contains $\cR$.

\begin{definition}\label{half-divisor} Let $\cX\to (C,0)$, $(\cX,\cR)\to (C,0)$ be a Kulikov, resp.~divisor, 
model of Enriques K3 surfaces for which $\iota_{\enr}$
defines a regular involution on $\cX$, resp.~preserving $\cR$. 
We define the {\it dlt model}, resp.~the {\it half-divisor model}, to be 
the quotient by the Enriques involution:
\begin{displaymath}
  \cZ:= \cX/\iota_{\enr},
  \quad\textrm{resp.~}\quad
  (\cZ,\cR_\cZ) := (\cX,\cR)/\iota_{\enr}.
\end{displaymath}

\end{definition}

\begin{proposition}\label{half-kulikov-prop} Let $(\cZ,\cR_\cZ)\to (C,0)$ be a half-divisor model for $\fen$ for the divisor
models constructed in Proposition \ref{kulikov-prop}. Then, the fibers of $\cZ$ have slc singularities,
$K_\cZ+\epsilon \cR_\cZ$ is relatively big and nef over $C$, and $\cR_\cZ$ contains no log canonical centers.
 In Type III, for cusp number
\begin{enumerate}
\item[(1)] we have $\Gamma(\cZ_0) = \bR\bP^2$. Each component $V_i\subset \cZ_0$ is isomorphic, 
up to normalization, with either of the two connected components of its inverse image in $\cX_0$.
\smallskip
\item[(2--5)] we have $\Gamma(\cZ_0) = \bD^2$. If the component $V_i\subset \cZ_0$ is covered by two irreducible
components of $\cX_0$ then up to normalization, $V_i$ is isomorphic to either of these components. If 
$V_i\subset \cZ_0$ is covered by one irreducible component of $\wV_i\subset \cX_0$ then $\iota_{\enr,0}$
acts on $\wV_i$ with exactly four fixed points, two pairs of points on appropriately chosen double curves $\wD_{ij}$
and $\wD_{ik}\subset \wV_i$. 
\end{enumerate} 
In Type II, $\Gamma(\cZ_0)$ is a segment.
For cases in Fig.~\ref{fig-sterk-allcusps} with a double rectangle, $\iota_{\enr,0}$ acts
by flipping $\Gamma(\cX_0)$ and fixing no points of $\cX_0$. Assuming that $\cX_0$ contains a double curve $E$
preserved by $\iota_{\enr,0}$, the action of the involution on $E$ is a nontrivial $2$-torsion
translation. For cases in Fig.~\ref{fig-sterk-allcusps} with a single rectangle, $\iota_{\enr,0}$ preserves
every component of $\cX_0$. On any double curve, the action is by an elliptic involution fixing exactly
four points. There are no other fixed points on $\cX_0$.
\end{proposition}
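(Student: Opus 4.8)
The plan is to split the statement into a soft part---the slc property, relative bigness and nefness, and the log-canonical-centre assertion, all of which should follow formally from the theory of divisor models together with the fact that the quotient map $q\colon\cX\to\cZ$ is quasi-\'etale---and a hard part---the descriptions of $\Gamma(\cZ_0)$ and of the components $V_i$, which I would read off from the explicit integral-affine pictures of Proposition~\ref{kulikov-prop} and the analysis of the cusps of $\fken$ recalled in Section~\ref{sec:cusps-Fken}. For the soft part: since $(\cX,\cR)\to(C,0)$ is a divisor model, $K_\cX\equiv0$, $\cX_0$ is a reduced normal crossing surface, and $\cR$ is relatively nef and disjoint from the singular strata of $\cX$. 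By Theorem~\ref{enr-kul} and Proposition~\ref{kulikov-prop} the involution $\iota_{\enr}$ is regular, preserves $\cR$, and fixes at most finitely many points of $\cX_0$; hence $q$ is quasi-\'etale and $q^*(K_\cZ+\epsilon\cR_\cZ)=K_\cX+\epsilon\cR$. The right-hand side is relatively nef, and relatively big since on a general fibre it equals $\epsilon$ times the ample ramification divisor; finiteness of $q$ then gives that $K_\cZ+\epsilon\cR_\cZ$ is relatively big and nef. As $\iota_{\enr,0}$ has no divisorial fixed locus, $(\cZ_0,0)$ has slc singularities and each component $V_i$ of $\cZ_0$ has at worst $A_1$ points, at the images of isolated fixed points of $\iota_{\enr,0}$. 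Finally, for $0<\epsilon\ll1$ the log canonical centres of $(\cZ,\epsilon\cR_\cZ)$ are the strata of $\cZ_0$, and $\cR_\cZ=q(\cR)$ meets none of them because $\cR$ is disjoint from the singular strata of $\cX$ and, by the analysis below, every fixed point of $\iota_{\enr,0}$ lies on a double curve.

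For the dual complexes in Type III I would compute $\Gamma(\cZ_0)=B(\ell)/\iota_{\enr,{\rm IA}}$ directly from the case-by-case involution of $S^2$ listed in Proposition~\ref{kulikov-prop}. For cusp~$1$ the involution is ``rotate each hemisphere by $180^\circ$, then swap the hemispheres'', which is fixed-point-free, so $\Gamma(\cZ_0)\simeq\bR\bP^2$, in agreement with the antipodal $(10,10,0)_1$-cusp of $\fken$. For cusps~$2$--$5$ the involution is a reflection whose fixed locus is a circle---for cusps $2,3,4$ the two invariant arcs in $P$ and $P^{\rm opp}$, glued along the equator, and for cusp~$5$ the equator itself---so $\Gamma(\cZ_0)$ is a disk $\bD^2$ with boundary that circle, matching the hemispherical $(10,8,0)_1$-cusp. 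In Type II, $\Gamma(\cX_0)$ is a segment and $\iota_{\enr,0}$ acts on it as the identity (single rectangle) or as the flip (double rectangle); in either case the quotient is again a segment.

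For the components, the vertices of $\Gamma(\cZ_0)$ are the $\iota_{\enr,{\rm IA}}$-orbits of vertices of $\Gamma(\cX_0)$. If an orbit has two elements, the corresponding components $\wV_i,\wV_i'\subset\cX_0$ are interchanged by $\iota_{\enr,0}$, so each maps isomorphically onto $V_i$ after normalization; for cusp~$1$ the $S^2$-action is free, so this is the only case. If the orbit is a fixed vertex $v$ (cusps $2$--$5$), then $\iota_{\enr,0}$ restricts to an involution of $\wV_i$, and---$\iota_{\enr,{\rm IA}}$ being orientation-reversing---it reflects the link of $v$ in $B(\ell)$; the two fixed points of this reflection lie on two double curves $\wD_{ij},\wD_{ik}\subset\wV_i$, which are therefore preserved by $\iota_{\enr,0}$. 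Each of these is a rational curve that cannot be fixed pointwise (that would enlarge the fixed locus in $B(\ell)$ beyond the circle above), so $\iota_{\enr,0}$ restricts to a nontrivial involution of it with exactly two fixed points, four in all. I would then verify that there are no further fixed points---equivalently that $d\iota_{\enr,0}=\mathrm{diag}(-1,-1)$ at each of the four, which also accounts for the $A_1$ singularities used above---by inspecting the toric/non-toric charts of $\wV_i$ near $v$, in which $\iota_{\enr,{\rm IA}}$ is an orientation-reversing lattice reflection; this is consistent with the holomorphic Lefschetz formula, whose left side is $4\cdot\frac14$ and whose right side is $\chi(\cO_{\wV_i})=1$ for rational $\wV_i$. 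The Type II statements follow the same way: for a single rectangle every component is preserved, $\iota_{\enr,0}$ restricts to the four-fixed-point elliptic involution on each elliptic double curve, and (by the $\fken$-analysis) has no further fixed points; for a double rectangle $\iota_{\enr,0}$ is fixed-point-free, so on any preserved elliptic double curve it must be translation by a nonzero $2$-torsion point.

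The main obstacle is this last step: determining the fixed locus of $\iota_{\enr,0}$ on a fixed component $\wV_i$---in particular excluding fixed points off the two preserved double curves and pinning down the local type as $A_1$---which does not seem to follow formally and genuinely requires the explicit $\ias$-model of $\wV_i$ rather than merely its homotopy type. I would expect to handle it in the same spirit as the analogous fixed-locus computations for K3 degenerations in \cite{alexeev2022mirror-symmetric}.
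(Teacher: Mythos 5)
Your skeleton matches the paper's proof: compute $\Gamma(\cZ_0)$ as the quotient of $B(\ell)$ by $\iota_{\enr,{\rm IA}}$, split components into swapped pairs versus preserved ones, and reduce everything to locating the fixed points of $\iota_{\enr,0}$ on a preserved component. The soft statements are also handled essentially as in the paper, except for one point you gloss over: the paper writes down the local model $\{xz=0\}/(x,y,z)\mapsto(-x,-y,-z)$ of the quotient at a fixed point on a double curve and checks that it is slc with its only lc center being the image of the double locus, so that no new lc center is created at the origin; this explicit check is what lets $\cR_\cZ$ inherit ``contains no lc centers'' from $\cR$, and it is the reason the fixed points must be shown to lie on double curves before the soft part can be closed.

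The genuine gap is the one you flag yourself: pinning down the fixed locus of $\iota_{\enr,0}$ on a preserved $\wV_i$. Your proposed chart-by-chart inspection is not carried out, the holomorphic Lefschetz formula is only a consistency check, and one auxiliary claim is wrong: a double curve fixed pointwise by $\iota_{\enr,0}$ would \emph{not} ``enlarge the fixed locus in $B(\ell)$'' --- the induced action on the dual complex cannot distinguish a pointwise-fixed double curve from one carrying an involution with two fixed points, so this must instead come from the finiteness of ${\rm Fix}(\iota_{\enr,0})$ supplied by the cusp analysis of $\fken$. The paper closes the argument with a short, chart-free device: the logarithmic $2$-form on $\cX_0$ is $dx\wedge dy$, $\frac{dx}{x}\wedge dy$, or $\frac{dx}{x}\wedge\frac{dy}{y}$ at smooth, double, and triple points respectively, and $\iota_{\enr,0}$ is non-symplectic with no divisorial fixed locus; an isolated fixed point at a smooth point would have differential $-\mathrm{id}$, which preserves $dx\wedge dy$, and any diagonal sign action at a triple point preserves $\frac{dx}{x}\wedge\frac{dy}{y}$, so both are excluded, leaving only interior points of preserved double curves, where $(x,y)\mapsto(-x,-y)$ is genuinely anti-invariant; an involution of $\wD_{ij}\simeq\bP^1$ then has exactly two fixed points, giving the count of four. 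The same $2$-form yields the Type II dichotomy: on a preserved component the action on a preserved double curve $E$ is locally negation, hence an elliptic involution, while if the two components through $E$ are swapped the opposite residues force $\iota_{\enr,0}$ to preserve a holomorphic $1$-form on $E$, hence to be a translation, necessarily by nontrivial $2$-torsion since the fixed locus is finite. Your elementary observation that a fixed-point-free automorphism of an elliptic curve is a translation would serve equally well for that last step, but the $2$-form argument is the missing idea for the Type III fixed-point analysis.
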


\begin{proof} In Type III, the homeomorphism type of $\Gamma(\cZ_0)$ follows directly from the description
of the action of $\iota_{\enr, {\rm IA}}$ in Proposition \ref{kulikov-prop}. In Type II, we can construct divisor models
$(\cX,\cR)\to (C,0)$ by taking limits of $B(\ell)$ as it collapses to a segment, or equivalently as $\lambda$ approaches
a rational isotropic ray at the boundary of $\ocC$.

The resulting central fiber $\cX_0$
is a chain of surfaces and by arguments similar to Theorem \ref{enr-kul}, a general degeneration
$(\cX,\cR)$ to the given Type II boundary divisor admits an additional involution $\iota_\enr$ which acts
on $\Gamma(\cX_0)$ by the limit of the action of $\iota_{\enr,{\rm IA}}$ on the segment $B(\ell)$.
This gives the claimed action on $\Gamma(\cX_0)$, by direct examination of the limiting dual segment
$B(\ell)$ for all entries of Figures \ref{fig-par2}, \ref{fig-pars}.

If $\iota_{\enr,0}$ permutes two irreducible components, it is clear that the normalization of the quotient
agrees with the normalization of either component. 


So suppose $\iota_{\enr,0}$ preserves the pair $(\wV_i,\wD_i)\subset \cX_0$ (necessarily we
are at a Cusp 2--5).
Possibly assuming further divisibility of $\lambda$, and choosing our triangulation of 
$B(\ell)\simeq S^2$ appropriately, we may assume that the fixed locus of $\iota_{\enr,{\rm IA}}$
is a circle $S^1\subset B(\ell)$ formed from a collection of vertices $\widetilde{v}_i$
and edges $\widetilde{e}_{ij}$ of $\Gamma(\cX_0)$. 

Denote the corresponding collection of components 
$\widetilde{V}_i\subset \cX_0$ the {\it Enriques equator}. For Cusps 2, 3, 4 the Enriques equator
is distinct from the {\it del Pezzo equator}, which corresponds to the common glued boundary 
of $P$ or $P^{\rm opp}$ and supports $\Gamma(\cR_0)\subset \Gamma(\cX_0)$. 
For Cusp 5, the Enriques and
del Pezzo equators coincide since $\iota_{\dP,{\rm IA}}= \iota_{\enr,{\rm IA}}$.

The logarithmic $2$-form on $\cX_0$ is of the form
 $dx\wedge dy$, $\frac{dx}{x}\wedge dy$, or $\frac{dx}{x}\wedge \frac{dy}{y}$
depending, respectively, on whether $(x,y)$ are local coordinates (in a component) at a
smooth point, a point in a double curve, or a triple point of $\cX_0$.
Since $\iota_{\enr,0}$ has no divisorial fixed locus and is non-symplectic, it fixes at most
a finite subset of $\cX_0$ contained in the double locus, where  
$(x,y)\mapsto (-x,-y)$ is non-symplectic. 

So in Type III, the only fixed
points of $\iota_{\enr,0}$ are points in some $\wD_{ij}\subset \cX_0$ along the Enriques
equator. Being an involution of $\wD_{ij}\simeq \bP^1$, there must be exactly
 $2$ such fixed points. We recover then a similar phenomenon as for the Kulikov models 
 of Enriques degenerations which were described in \cite[Sec.~8F]{alexeev2022mirror-symmetric}.

 In Type II, the analysis is similar: If $\iota_{\enr,0}$ preserves a component $\wV_i$,
 then the involution on a preserved double curve $\wD_{ij}\simeq E$ is locally given by 
 negation on $E$. So the induced action on this (and in turn any) 
 double curve is an elliptic involution. On the other hand, suppose $\iota_{\enr,0}$ permutes
 the two components containing $E$. Then, since the residues ${\rm Res}_E\omega_{\wV_i}=
 - {\rm Res}_E\omega_{\wV_j}$ from the two components
 of the logarithmic two-form are negatives of each other, we must have by non-symplecticness
 that $\iota_{\enr, 0}$ preserves a holomorphic one-form on $E$. So it acts by a $2$-torsion translation, 
 nontrivial because the fixed locus is finite.

Having analyzed the action of $\iota_{\enr, 0}$ on $\cX_0$, we see the quotient $\cZ_0$ 
has SNC singularities at all points, except the images of the fixed points along 
the double curves of the Enriques equator. Here the local equation of the quotient is 
\begin{equation} \label{double-sing}\{(x,y,z)\in \bC^3\,\big{|}\, xz=0\}/(x,y,z)\mapsto (-x,-y,-z) \end{equation}
 which is slc,
with the only log canonical center being the image of the double locus $x=z=0$. So $\cZ_0$
has slc singularities.

Since the fixed locus is finite, $K_\cZ$ is numerically trivial.
Furthermore, $\cR_\cZ$ inherits the property of containing
no log canonical centers, and being relatively big and nef, from $\cR$---it is important
here that no log canonical center was introduced at $(0,0,0)$ in the above quotient (\ref{double-sing}).
The proposition follows.
\end{proof}

\begin{corollary}\label{hk-cor} The KSBA-stable limit of a degeneration of $(\cZ^*,\epsilon \cR_\cZ^*)\to C^*$ 
can be computed from the half-divisor model $(\cZ,\cR_\cZ)\to (C,0)$ of Proposition \ref{half-divisor} as 
$${\rm Proj}_C\, \bigoplus_{n\geq 0} H^0(\cZ, n\cR_\cZ).$$
\end{corollary}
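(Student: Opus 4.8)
The plan is to deduce Corollary~\ref{hk-cor} from Proposition~\ref{half-kulikov-prop} together with the general KSBA theory recalled in Section~\ref{sec:ksba-recall}. The statement is, in essence, that the relative log canonical model of the half-divisor model computes the stable limit. First I would recall that, by Definition~\ref{def:ksba-comp}, the stable limit of $(\cZ^*,\epsilon\cR_\cZ^*)\to C^*$ is the unique stable pair $(\cZ_0^{\mathrm{s}},\epsilon\cR_{\cZ,0}^{\mathrm{s}})$ over $0\in C$ extending the family; this exists and is unique by properness and separatedness of the KSBA moduli space (see \cite{kollar2023families-of-varieties}, and \cite{kollar2019moduli-of-polarized,birkar2023geometry-polarized} for the $0<\epsilon\ll1$ setup noted in Section~\ref{sec:ksba-recall}).

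The key input is that, by Proposition~\ref{half-kulikov-prop}, the family $(\cZ,\cR_\cZ)\to(C,0)$ already has slc fibers, $K_\cZ\equiv_C 0$, $\cR_\cZ$ relatively big and nef with no log canonical centers. Hence for $0<\epsilon\ll1$ the pair $(\cZ,\epsilon\cR_\cZ)$ is an slc family over $C$ with $K_\cZ+\epsilon\cR_\cZ$ relatively big and nef (bigness because $\cR_\cZ$ is big and $\epsilon>0$), but not necessarily relatively ample: some curves in $\cZ_0$ may be $(K_\cZ+\epsilon\cR_\cZ)$-trivial. The standard procedure is then to pass to the relative log canonical model
\begin{displaymath}
  \cZ^{\mathrm{s}} := \operatorname{Proj}_C \bigoplus_{n\ge0} H^0\!\big(\cZ,\, n(K_\cZ+\epsilon\cR_\cZ)\big)
  = \operatorname{Proj}_C \bigoplus_{n\ge0} H^0(\cZ,\, n\cR_\cZ),
\end{displaymath}
where the second equality uses $K_\cZ\sim_{\bQ,C}0$ (so $n(K_\cZ+\epsilon\cR_\cZ)$ and a suitable multiple of $\cR_\cZ$ have the same sections, after clearing $\epsilon$-denominators and replacing $n$ by a divisible multiple). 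Here I would invoke finite generation of the relative log canonical ring for the slc pair $(\cZ,\epsilon\cR_\cZ)$ with nef and big log canonical class — this is exactly the semi-ample/contraction statement in the theory of stable pairs, e.g.~\cite{kollar2023families-of-varieties} and \cite{birkar2023geometry-polarized}; over the normal generic point it is classical, and over the central fiber one uses that slc is preserved and the contraction exists. The morphism $\cZ\to\cZ^{\mathrm{s}}$ is the relative ample model; it contracts precisely the $(K_\cZ+\epsilon\cR_\cZ)$-trivial curves, is an isomorphism over $C\setminus0$, and the pushforward of $(\cZ,\epsilon\cR_\cZ)$ is a family of stable pairs. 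By uniqueness of the stable limit, the central fiber $(\cZ^{\mathrm{s}}_0,\epsilon\cR^{\mathrm{s}}_{\cZ,0})$ is the KSBA-stable limit of $(\cZ^*,\epsilon\cR_\cZ^*)$, which is the assertion.

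The main obstacle I anticipate is the finite generation / semi-ampleness step: one must know that $\bigoplus_{n\ge0}H^0(\cZ,n\cR_\cZ)$ is a finitely generated $\cO_C$-algebra and that its Proj carries an slc pair, i.e.~that contracting the $\cR_\cZ$-trivial curves does not introduce worse-than-slc singularities and that the limit divisor $\cR^{\mathrm{s}}_{\cZ,0}$ still avoids the relevant centers. For the central fibers over $C\setminus 0$ this is automatic (they are already the stable Enriques pairs parametrized by $\fen$), so everything is concentrated at $0\in C$. There, the cleanest route is to lift the contraction from the divisor model: on $\cX$ the analogous relative ample model of $(\cX,\epsilon\cR)$ exists by the K3 theory of \cite{alexeev2023stable-pair,alexeev2022compactifications-moduli}, the involution $\iota_\enr$ descends to it by Theorem~\ref{enr-kul}, and the quotient is $\cZ^{\mathrm{s}}$ — one checks slc of the quotient exactly as in the local computation~\eqref{double-sing} in the proof of Proposition~\ref{half-kulikov-prop}, and that quotient commutes with $\operatorname{Proj}$ of the (invariant) section ring. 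This reduces the obstacle to the already-cited K3 statements plus the finite quotient argument, rather than requiring new vanishing or abundance input.
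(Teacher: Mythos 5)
Your argument is correct and follows the same route the paper intends: the paper states this corollary without proof, treating it as immediate from Proposition~\ref{half-kulikov-prop} (slc fibers, $K_\cZ$ numerically trivial, $\cR_\cZ$ relatively big and nef with no log canonical centers) combined with the standard fact that the relative ample model of such a family computes the KSBA-stable limit, exactly as you argue. Your extra care about finite generation and slc-ness of the contraction, handled by descending the relative ample model of $(\cX,\epsilon\cR)$ through the $\iota_\enr$-quotient, is consistent with the paper's subsequent discussion (which likewise obtains the stable components by contracting either before or after taking the quotient).
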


This also furnishes a somewhat inexplicit description of the components of 
the KSBA-stable limit $(\overline{\cZ}_0,\epsilon \overline{\cR}_{\overline{\cZ},0})$: First, we take a component 
$(\wV_i,\wD_i,\wR_i)\subset (\cX_0,\cR_0)$ of the divisor model for the degeneration in $F_{(2,2,0)}$. This
is, up to corner blow-ups, the minimal resolution of an ADE surface of \cite{alexeev17ade-surfaces}.
Then, we impose the condition that the periods and dual complex of 
$(\cX_0,\cR_0)$ are involution invariant. Now, the Torelli theorem for anticanonical pairs
\cite[Thm.~1.8]{gross2015moduli-of-surfaces}, \cite[Sec.~8]{friedman2015on-the-geometry} 
implies that $(\wV_i,\wD_i,\wR_i)$ admits an involution $\iota_{\enr, i}$ which acts
in an orientation-reversing manner on the cycle $\wD_i$. Then the quotient 
$$(V_i,D_i+\epsilon R_i):=(\wV_i,\wD_i+\epsilon\wR_i)/\iota_{\enr, i}$$ is a log Calabi-Yau pair of index $\leq 2$
with $R_i$ big and nef. The stable component 
$$(\oV_i,\oD_i+\epsilon \oR_i)\subset (\overline{\cZ}_0,\epsilon\overline{\cR}_{\overline{\cZ}_0})$$
is (up to normalization) the result of contracting all curves which intersect $R_i$ to be zero.
Alternatively, we can reverse the order, taking first the stable model of $(\wV_i,\wD_i+\epsilon\wR_i)$
to get an ADE surface of \cite{alexeev17ade-surfaces} forming a component of 
$(\overline{\cX}_0,\epsilon\overline{\cR}_0)$ and then taking the quotient by the induced 
involution $\overline{\iota}_{\enr, i}$. These stable surfaces and their
quotients are described further in Section \ref{sec:abcde}. 

\begin{remark} The quotient $\Gamma(\cZ_0)=\Gamma(\cX_0)/\iota_{\enr, {\rm IA}}$
of the dual complex inherits naturally an integral-affine structure (with boundary in the case
of a $\bD^2$ quotient) from $\Gamma(\cX_0)$. For the $\bD^2$ case,
the components forming the boundary of $\Gamma(\cZ_0)$ 
are exactly the image of the Enriques equator, and they are the only singular components
of $\cZ_0$, each component having $4$ total $A_1$ singularities.
\end{remark}

\begin{remark}\label{dlt} We have only proven that a
half-divisor model $(\cZ,\cR_\cZ)\to (C,0)$ exists for generic degenerations
with a given Picard-Lefschetz transform $\lambda$, since $\iota_{\enr}$
will in general be a birational involution. This issue arises even for Type I degenerations, when
$\cX_0$ acquires a $(-2)$-curve. 
If one contracts the ADE configurations in components 
of $\cX_0\subset \cX$ forming the loci of indeterminacy of $\iota_{\enr}$,
this issue does not arise and $\iota_\enr$ defines a morphism.
In general, the pair $(\cZ,\cZ_0)$ will only be dlt.
This lends further weight to the notion that dlt models are the correct
analogue of Kulikov models in the more general setting of
$K$-trivial degenerations, see \cite{defernex2017dual-complex, kollar2018remarks}.
\end{remark}

\begin{remark} In \cite{morrison1981semistable}, Morrison gave a description of semistable degenerations
of Enriques degenerations, in the style of the Kulikov-Persson-Pinkham
theorem
\cite{kulikov1977degenerations-of-k3-surfaces, persson1981degeneration-of-surfaces}.
The description of irreducible components
and how they are glued is quite intricate (and floral), 
involving {\it flowers}, {\it pots}, {\it stalk assemblies}, and {\it corbels}.

On the other hand, by Proposition \ref{half-kulikov-prop} and Remark \ref{dlt}, we have, 
in all cases, a relatively simple dlt model, whose singularities are SNC, except for some copies
of the singularity with equation \eqref{double-sing} on double loci, and some klt singularities which are $S_2$-quotients
of ADE singularities in the interiors of components.

The corbels of {\it loc.cit.}~correspond to the singularity (\ref{double-sing})
while the flowers and stalk assemblies
are the semistable resolutions of the $S_2$-quotients of the ADE singularities, in the total space of the smoothing.
Finally, the pots are the components of our dlt models along which the flower and stalk assembly are attached.

The cases (i a), (i b), (ii a), (ii b), (iii a), (iii b) of
\cite[Cor.~6.2]{morrison1981semistable} correspond, respectively, to Type I degenerations, Type I degenerations
with a klt singularity, Type II degenerations with Enriques involution flipping the segment, Type II degenerations with
Enriques involution fixing the segment,
Type III degenerations with $\Gamma(\cZ_0) = \bR\bP^2$ (Cusp 1), and finally Type III degenerations with
 $\Gamma(\cZ_0)=\bD^2$ (Cusps 2--5).
\end{remark}

\subsection{Examples}
\label{sec:examples} We give some examples of divisor and half-divisor models. To distinguish notationally
between different $0$-cusps, we write $B_k(\ell)$, $k\in \{1,\dotsc, 5\}$
for the folding-symmetric polarized $\ias$ at Cusp $k$, from Proposition \ref{kulikov-prop}.

\begin{example}\label{ex1} 
  $B_3(2,0^{15},2,4,6,4,0,4)$:
  Consider Cusp 3, with the diagonal folding
symmetry, and set $\ell = (2,0^{15},2,4,6,4,0,4)\in (\bZ_{\geq 0})^{22}$. Then from Section~\ref{sec:ias-hyp}, the moment polygon $\oP(\ell)$ for the toric model of the mirror 
is the sequence of vectors $(3,-3)$, $(2,2)$, $(-3,3)$, $(-2,-2)$ put successively end-to-end.

We perform Symington surgeries of size $1, 2, 3, 2$ along the four
 edges $(3,-3)$, $(2,2)$, $(-3,3)$, $(-2,-2)$ respectively, because
$(\ell_{16},\ell_{17},\ell_{18},\ell_{19})=(2,4,6,4)$. The result is the Symington
polytope $P(\ell)$. Glue $P(\ell)$ and $P(\ell)^{\rm opp}$ to produce
$B_3(\ell)$, which is depicted on the left of Figure \ref{halfKex1} (of course, only a fundamental
domain of the sphere $S^2$ can be depicted on flat paper).
Five red triangles depict the integral-affine singularities, with their {\it charge}
\cite[Def.~5.3]{alexeev2022compactifications-moduli} shown in red.

\begin{figure}
\centering
  \includegraphics[width=360pt]{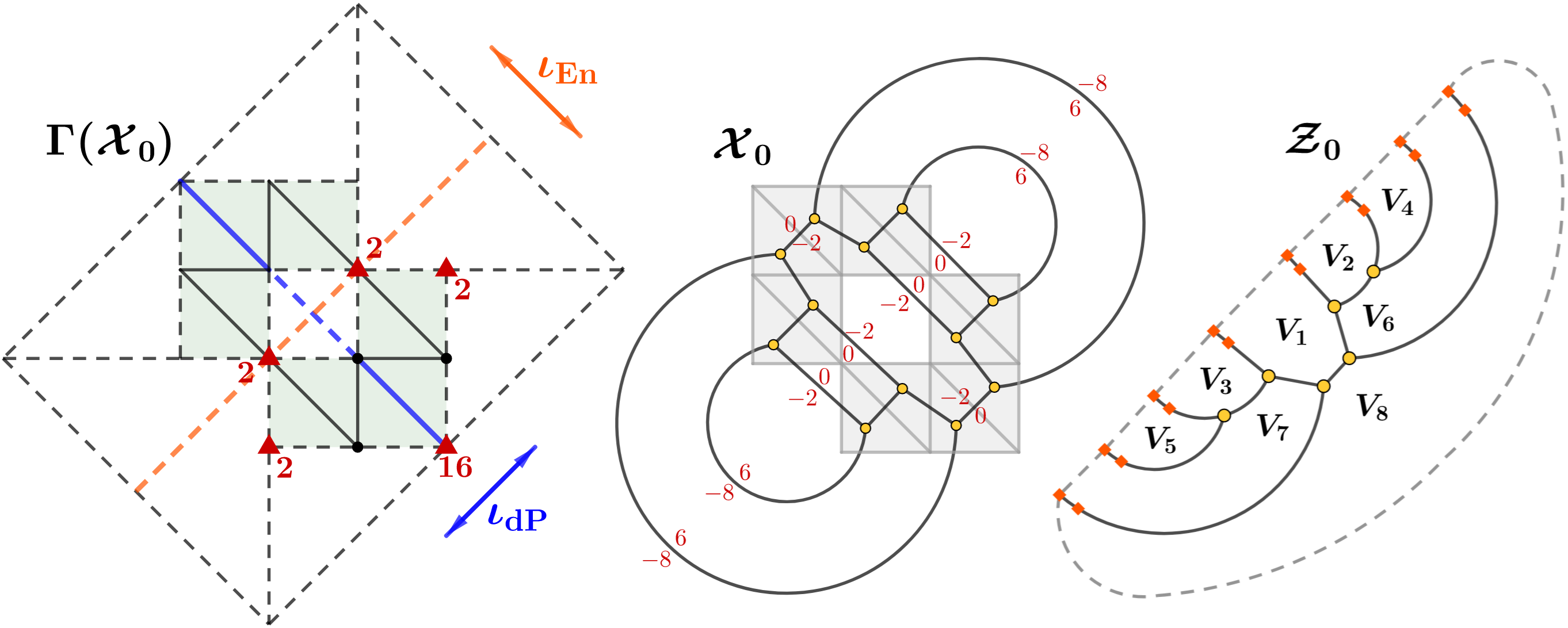} 
  \caption{$B_3(\ell)$ and central fibers for $\ell =(2,0^{15},2,4,6,4,0,4)$.}
  \label{halfKex1}
\end{figure}

The $\ias$ is then admits two involutions, Enriques and del Pezzo,
whose actions are shown in orange and blue, respectively. The corresponding
Enriques and del Pezzo equators are shown in the respective colors. A triangulation
into (green) lattice triangles is chosen, subordinate to both equators.
The blue del Pezzo equator, with integer weight $2$,
 forms the integral-affine polarization $R_{\rm IA}$.

The middle image of Figure \ref{halfKex1} depicts the corresponding Kulikov model $\cX_0$
of Enriques K3 degeneration. Triple points $\wT_{ijk}= \wV_i\cap \wV_j\cap \wV_k$ 
are depicted in yellow, double curves $\wD_{ij} = \wV_i\cap \wV_j$ are depicted in black.
The self-intersection numbers $$\wD_{ij}\big{|}_{\wV_i}^2+\wD_{ij}\big{|}_{\wV_j}^2=-2$$
are written in red (suppressed when both equal $-1$). The faces, including an outer face, represent the components $\wV_i$ with their anticanonical cycles $\wD_i = \sum_j \wD_{ij}$.

The righthand of Figure \ref{halfKex1} depicts the dlt model. It consists
of eight components $V_i$, $i=1,\dotsc,8$. Double loci and triple points
are still depicted in black and yellow. Successive components along the image
of the Enriques equator are $$V_1\cup V_2\cup  V_4 \cup V_6\cup V_8\cup V_7\cup V_5\cup V_3\cup V_1$$ and the double curves between these two components
contain two $A_1$-singularities of either containing surface, depicted by orange diamonds. 

The double covers $(\wV_6,\wD_6) \simeq (\wV_7,\wD_7)$ are toric, isomorphic to 
a two-fold corner blowup of $\bF_6$ as is $(\wV_1,\wD_1)$, which is the blow-up of the four
corners of an anticanonical square in $\bP^1\times \bP^1$.

The double covers
$(\wV_2,\wD_2)\simeq (\wV_3,\wD_3)$ are the internal blow-ups of $\bP^1\times \bP^1$
at two points $p,q$ on opposite components of an anticanonical square. The Enriques
involution acts in the corresponding toric coordinates by $(x,y)\mapsto (x^{-1},-y)$,
and thus for this involution to lift to the internal blow-up, the two blow-up points
must be interchanged: $y(p)=-y(q)$. This corresponds to choosing
the involution anti-invariant periods on $\cX_0$ for the unique $\times$-ed node
at Cusp 3.

The double covers $(\wV_4,\wD_4)\simeq (\wV_5,\wD_5)$ are both isomorphic to $\bF_2$.
Finally, $(\wV_8,\wD_8)$ is a minimal resolution of the $A_{15}$ surface of \cite{alexeev17ade-surfaces}. It is the $16$-fold internal blowup of $\bF_8$ at $16$ points on a section $s$, $s^2=8$. These $16$ points are placed symmetrically with respect to an involution
of $s$ and $\bF_8$, giving rise to an Enriques involution on $(\wV_8,\wD_8)$. 

The divisor $\cR_{\cZ_0}\subset \cZ_0$ is entirely supported on $V_1\cup_{D_{18}} V_8$
and has intersection number $\cR_{\cZ_0}\cdot D_{18} = 1$. We have that $R_1^2=0$
and $R_1\subset V_1$ is the image of two fibers of a toric ruling on $\wV_1$ while 
$R_8\subset V_8$ satisfies $R_8^2=8$ as it is the reduced image of the fixed locus $\wR_8\subset \wV_8$ satisfying $\wR_8^2=16$.

The map to the stable model $(\overline{\cZ}_0,\epsilon \overline{\cR}_{\overline{\cZ}_0})$ contracts
all components except $V_1$ and $V_8$ to points and contracts $V_1$ along a ruling,
leaving the image of $V_8$ as the only component. 
The normalization $$(\oV_8,\oD_8+\epsilon \oR_8)\simeq  
(\overline{\cZ}_0, \epsilon \overline{\cR}_{\overline{\cZ}_0})^\nu$$
has, as anticanonical boundary $\oD_8\simeq \bP^1$,
which is self-glued in $\overline{\cZ}_0$ along an involution fixing $0,\infty\in \oD_8$.
The singularities at $0,\infty\in \oV_8$ are rather complicated.

\end{example}

\begin{figure}[htbp]
\centering
  \includegraphics[width=360pt]{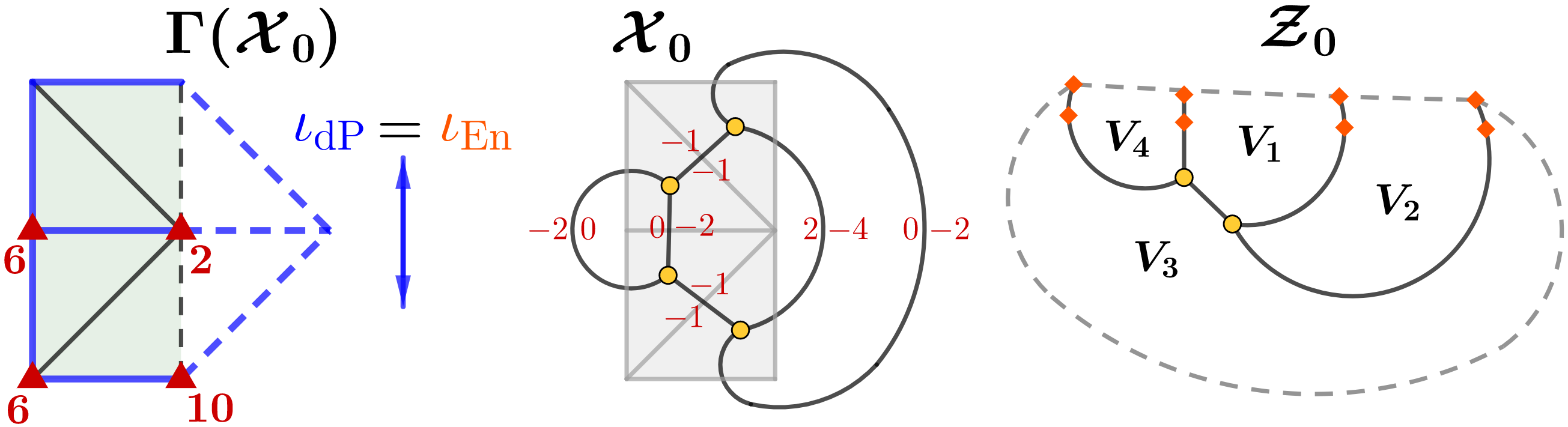} 
  \caption{$B_5(\ell)$ and central fibers for $\ell =(0,0,2,0^7,1,0^3,1,0,0,0,2,0,2,6)$.}
  \label{halfKex2}
\end{figure}

\begin{example}\label{ex2} $B_5(0,0,2,0^7,1,0^3,1,0,0,0,2,0,2,6)$: Consider Cusp 5,
whose folding symmetry is the same as $\iota_\dP$. This value of $\ell$ dictates
that we should put $(2,0)$, $(-1,1)$, $(-1,0)$, $(0,-1)$ end-to-end, then perform
a surgery of size $1$ along the edge $(-1,1)$, to construct $P(\ell)$. The corresponding
sphere $B(\ell)$ is shown in Figure \ref{halfKex2}, together with a Kulikov and 
dlt model, following the conventions of Example \ref{ex1}.

There are four components $(\wV_i,\wD_i)\subset \cX_0$ of the Kulikov model,
all of them preserved by the Enriques involution. Both $(\wV_3,\wD_3)$ and 
$(\wV_4,\wD_4)$ are corner blow-ups of $D_4$ involution pairs. The surface
$(\wV_1,\wD_1)$ is the internal blow-up at points on two opposite
fibers of an anticanonical square in $\bF_2$ and the Enriques involution
interchanges the blow-up points. Finally, $(\wV_2,\wD_2)$ is a corner
blow-up of a $D_8$ involution pair. We have $\wR_1^2=0$, $\wR_2^2=8$,
$\wR_3^2=\wR_4^2=4$.

The components of the stable limit of Enriques surfaces
$(\oV_3,\oD_3+\epsilon \oR_3)\simeq 
(\oV_4,\oD_4+\epsilon \oR_4)$ are denoted $D_{4} : 2 =  \pB_2^- $ 
and $(\oV_2,\oD_2+\epsilon \oR_2)$ is denoted by
$D_{8} : 2 =  \pB_4^- $ in Section \ref{sec:abcde}, where these
surfaces are described further. Only $V_1$ 
is contracted (along a ruling) in the stable limit $\overline{\cZ}_0$.
\end{example}

\begin{figure}[htbp]
  \centering
  \includegraphics[width=335pt]{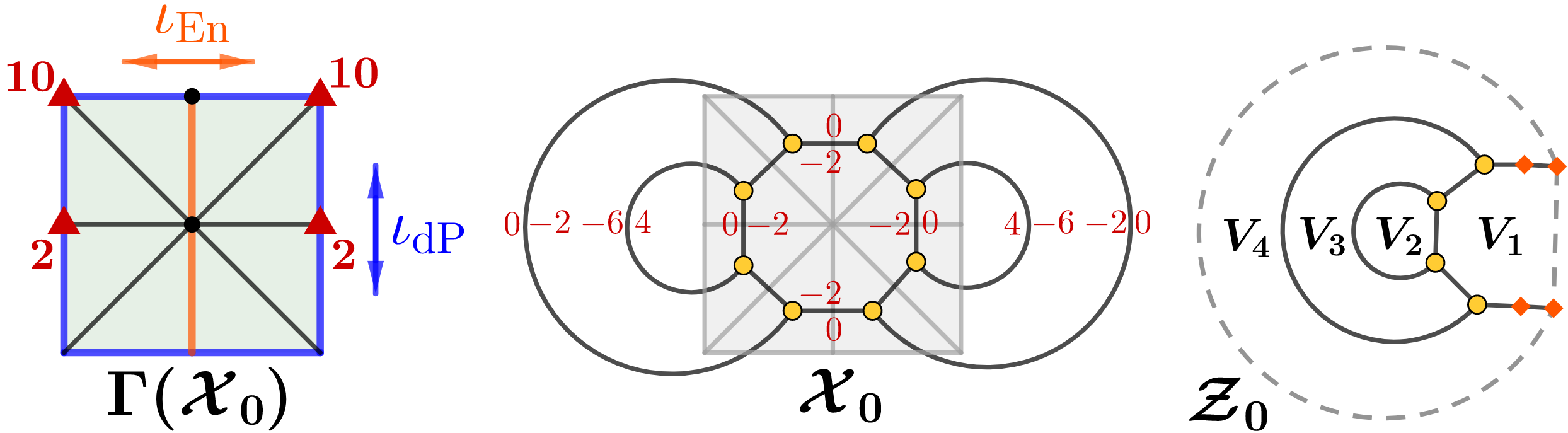} 
  \caption{$B_2(\ell)$ and central fibers for $\ell=(0,1,0^7, 2, 0^7, 1,0)$.}
  \label{halfKex3}
\end{figure}

\begin{example} $B_2(0,1,0^7, 2, 0^7, 1,0)$: Note here that since we are at Cusp 2,
$\ell\in (\bZ_{\geq 0})^{19}$. To form $\oP(\ell)$, we put $(0,1)$,
$(-2,0)$, $(0,-1)$ end-to-end, and then close the base of the polygon by a horizontal
line. No Symington surgeries of positive size are made, so $P(\ell)=\oP(\ell)$.
We glue to get $B(\ell)$ as in Figure \ref{halfKex3}. Even though the central horizontal
segment is fixed by $\iota_{\dP,{\rm IA}}$ it does not form part of the support
of $R_{\rm IA}$, see Section \ref{sec:ias-hyp}.

Then $\wV_2$ and $\wV_3$ are each two disjoint copies of $V_2$ and $V_3$.
$(V_2,D_2) \simeq (\bF_1, \hL+C)$ where $\hL$ is the strict transform of a line
in $\bP^2$ and $C$ is a conic. The surface $(V_3,D_3)$ is, up to two corner blow-ups,
the $D_8$ involution pair as in Example \ref{ex2}, but since $(\wV_3,\wD_3)$ is two disjoint
copies of such, there is no period-theoretic restriction. Finally, $(V_1,D_1)$ and $(V_4,D_4)$
form the image of the 
Enriques equator. They are both quotients of smooth toric surfaces by an involution
$(x,y)\mapsto (x^{-1},-y)$.

Only $V_3$ survives as a component $(\oV_3, \oD_3+\epsilon \oR_3)$. The double locus
$\oD_3\simeq \bP^1\cup \bP^1$ is a banana curve. In the stable model $\overline{\cZ}_0$, 
each $\bP^1\subset \oD_2$ is self-glued by an involution fixing the two nodes in 
$\oD_3$. We have $(\oR_3)^2=8$.
\end{example}

\begin{example} $B_4(0^6, 1, 0^7, 1, 0^5, 2,2) = B_1(0,0,1,0^7,1,0^9,2,2)$. This
is the Type II ray corresponding to the $1$-cusp with label $41$, so it occurs
as a limit of $\ias$ at either Cusp 1 or 4. The dual complex $\Gamma(\cX_0)$
is a segment of length one and the Enriques involution flips the segment 
(this means that, strictly speaking, the Enriques equator is not a 
sub-simplicial complex of $\Gamma(\cX_0)$, as we usually require).
The surface $\cX_0=\wV_1\cup_E \wV_2$ is the union of two copies of the same
$\wD_8$ involution pair, glued with a twist by $2$-torsion
along the elliptic curves $E\in |-K_{\wV_i}|$. 

The quotient $\cZ_0$ is then a non-normal surface with $\cZ_0^\nu\simeq (\wV_1,E)$,
and the normalization map glues $E$ to itself by the $2$-torsion translation.
We have $\overline{\cZ}_0=\cZ_0$. \end{example}

\section{Toroidal, semitoroidal, and KSBA compactifications}
\label{sec:ksba}

\subsection{Toroidal compactification for the Coxeter fans}
\label{sec:toroidal}

In Section~\ref{sec:vinberg} we reviewed the basic results of
\cite{vinberg1972-groups-of-units, vinberg1973some-arithmetic} on
reflection groups acting on hyperbolic lattices. Now we recall
applications of this theory to toroidal compactifications.

Let $\Lambda$ be a hyperbolic lattice of rank $r$ and signature
$(1,r-1)$, and let $\cC$ be the positive cone, one of the two halves
of the set $\{v\in\Lambda_\bR \mid v^2>0\}$.  In the applications to
(semi)toroidal compactifications, instead of the closure $\ocC$ one
operates with the rational closure $\ocC_\bQ$, obtained by adding only
rational vectors at infinity.

Let $W$ be a group acting on $\Lambda$, generated by reflections in a
set of vectors of $\Lambda$. Its fundamental domain is
\begin{displaymath}
  \ch = \{v \in \ocC \mid \alpha_i \cdot v \ge 0 \}
\end{displaymath}
for a set of simple roots $\alpha_i$ with $\alpha_i^2<0$ which is encoded in
a Coxeter diagram $G$. The chamber $\ch$ can be identified with a
polyhedron $P$ in the hyperbolic space $\bP\cC$.  The vectors with
$v^2=0$ are treated as points at infinity of $\bP\cC$.

The subgroup $O^+(\Lambda)$ of the isometry group $O(\Lambda)$ is the
subgroup of index~$2$ that preserves $\cC$. One has
$O^+(\Lambda) = S.W$, where $S$ is a subgroup of symmetries of $P$. 

\begin{definition}
  The \emph{Coxeter semifan} $\fF\cox$ is the semifan with support
  $\ocC_\bQ$ whose maximal cones are chambers of $W$,
  i.e. $\ch$ and its $W$-images.
\end{definition}

It is a fan iff $P$ has finite volume, which is equivalent to $W$
having finite index in $O(\Lambda)$. If this condition is satisfied
then the faces of $\ch$ are of two types:
\begin{enumerate}
\item Type II rays $\bR_{\ge0}v$ generated by vectors with $v^2=0$
  on the boundary of $\ocC_\bQ$. These are in bijection with the
  maximal parabolic subdiagrams of $G$.
\item Type III cones. These are in bijection with elliptic
  subdiagrams of $G$.
\end{enumerate}

\medskip

By \cite[Sec. 3B]{alexeev2022mirror-symmetric} the moduli space
$\fhyp$ admits a toroidal compactification $\ofhyp\ucox$ defined by
the collection of fans
$\fF\cox^\dP = \{\fF_r(18,0,0), \fF_r(18,2,0) \}$, one for each
$0$-cusp. These fans are Coxeter fans for the hyperbolic lattices
$(18,0,0)_1$, $(18,2,0)_1$ for the full reflection groups $W_r$,
generated by reflections in the $(-2)$-roots and in the $(-4)$-roots of
divisibility~$2$. The Coxeter diagrams $G_r(18,0,0)$ and $G_r(18,2,0)$
are given in Figure~\ref{fig-k3-cusps}.

By Lemma~\ref{lem:fen-closed-in-fhyp} there is an immersion
$j\colon \fen\to\fhyp$ whose image is a Noether-Lefschetz locus in
$\fhyp$. The normalization of the closure of $j(\fen)$
in $\ofhyp\ucox$ is then a toroidal
compactification $\ofen\ucox$ for the 
fans $\{\fF^k\cox\}_{k=1,2,3,4,5}$, one for each of $0$-cusp of $\fen$.
The fans
$\fF^k\cox$ are the intersections of the above fans $\fF_r$ in the
lattices $\oT_\dP=(18,0,0)_1$ and $(18,2,0)_1$ with the sublattices
$\oT_\enr = (10,10,0)_1$ and $(10,8,0)_1$, as in
Section~\ref{sec:cusps}. By Lemma~\ref{lem:all-diagrams-are-folded}
and Corollary~\ref{cor:folded-diagrams} these five fans are the
Coxeter fans for the folded Coxeter diagrams $G_r^k$ of
Figures~\ref{fig-cusp2}, \ref{fig-cusps1345}.
By \cite{sterk1991compactifications-enriques1} the
induced groups acting on $e^\perp/e$ are of 
the form $\Gamma_k=\Aut(G_r^k)\ltimes W(G_r^k)$



\begin{lemma}\label{lem:rays-toroidal-comp}
  For $k=1,2,3,4, 5$, the numbers of Type II + Type III rays in
  $\fF^k\cox/\Gamma_k$ are $4+4$, $2+8$, $3+15$, $4+12$,
  $5+17$.
  The toroidal compactification $\ofen\ucox$ has $9+56=65$ Type II +
  Type III divisors. 
\end{lemma}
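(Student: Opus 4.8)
The plan is to reduce the count to a finite combinatorial enumeration on the five folded Coxeter diagrams $G_r^k$ of Figures~\ref{fig-cusp2} and~\ref{fig-cusps1345}, and then to assemble the five $0$-cusp charts of $\ofen\ucox$.

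First I would record the setup from Section~\ref{sec:toroidal}: combined with Lemma~\ref{lem:all-diagrams-are-folded} and Corollary~\ref{cor:folded-diagrams}, it gives that $\fF^k\cox$ is the Coxeter fan of the reflection group $W(G_r^k)$ on the rational cone $\ocC_\bQ(\oT_\enr)$, and that the acting group is $\Gamma_k=\Aut(G_r^k)\ltimes W(G_r^k)$. Since $W(G_r^k)$ acts simply transitively on chambers, the cones of $\fF^k\cox$ modulo $W(G_r^k)$ are exactly the faces of the fundamental chamber $\ch^k$, so the rays of $\fF^k\cox/\Gamma_k$ are the $\Aut(G_r^k)$-orbits of extreme rays of $\ch^k$. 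By Sterk \cite{sterk1991compactifications-enriques1} each $W(G_r^k)$ has finite index in $O(\oT_\enr)$, i.e.\ the Coxeter polytope $P^k$ (the image of $\ch^k$ in hyperbolic $9$-space) has finite volume; hence every extreme ray of $\ch^k$ is either Type~II --- isotropic, corresponding to a maximal parabolic subdiagram of $G_r^k$ (an ideal vertex of $P^k$) --- or Type~III --- of positive norm, corresponding to a rank-$9$ elliptic subdiagram of $G_r^k$ (a finite vertex of $P^k$). This is the dichotomy of the statement.

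Then I would carry out the two counts. For Type~II, the $\Aut(G_r^k)$-orbits of maximal parabolic subdiagrams are precisely the $1$-cusps incident to the $k$-th $0$-cusp, which were already enumerated in Figures~\ref{fig-par2}--\ref{fig-pars}: their numbers for $k=1,\dots,5$ are $4,2,3,4,5$. For Type~III, I would read the rank-$9$ elliptic subdiagrams of each $G_r^k$ directly off Figures~\ref{fig-cusp2},~\ref{fig-cusps1345} and group them into $\Aut(G_r^k)$-orbits, obtaining $4,8,15,12,17$; this agrees with Sterk's computation and is easily re-verified with sagemath. So $\fF^k\cox/\Gamma_k$ has $4+4,\; 2+8,\; 3+15,\; 4+12,\; 5+17$ rays for $k=1,\dots,5$.

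Finally I would pass to the global count on $\ofen\ucox$. A Type~III ray has positive norm, so it has finite stabilizer in $O^+(\oT_\enr)$ and lies over a single $0$-cusp; distinct such rays give distinct boundary divisors, for a total of $4+8+15+12+17=56$. A Type~II ray, on the other hand, corresponds to a $1$-cusp of $\fen$, and the same $1$-cusp appears over every $0$-cusp in its closure while contributing a single boundary divisor; since $\fen$ has exactly $9$ distinct $1$-cusps (Figure~\ref{fig-sterk-allcusps}), these yield $9$ divisors. Hence $\ofen\ucox$ has $9+56=65$ boundary divisors. The main obstacle is the Type~III enumeration: one has to identify $\Aut(G_r^k)$ correctly for each folded diagram and carefully exclude the ideal vertices, checking that every retained $9$-node subdiagram really spans a negative-definite sublattice; it is the finite-volume property of $P^k$ (from Sterk) that makes the vertex list complete, so that nothing is missed or over-counted.
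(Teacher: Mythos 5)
Your proposal is correct and follows essentially the same route as the paper: the paper's proof is a direct enumeration of maximal parabolic subdiagrams (Type II rays) and rank-$9$ elliptic subdiagrams (Type III rays) of the folded Coxeter diagrams $G_r^k$, together with the observation that a Type II divisor corresponds to a $1$-cusp passing through several $0$-cusps and hence to several rays, so the $18$ Type II rays collapse to $9$ divisors while the $56$ Type III rays each give a distinct divisor. Your additional framing via Vinberg's dichotomy and the finite-volume property is exactly the background the paper sets up in Sections~\ref{sec:vinberg} and~\ref{sec:toroidal}.
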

\begin{proof}
  Direct enumeration of maximal parabolic and elliptic subdiagram of
  rank~$9$ in the Coxeter diagrams $G^k_r$.  Type II divisors
  correspond to curves in $\ofen\ubb$ passing through several
  $0$-cusps, so each of them corresponds to several rays in
  $\fF^k/\Gamma_k$.
\end{proof}

\subsection{Semitoroidal compactification for the generalized Coxeter fans}
\label{sec:semitoroidal}

Looijenga's semitoric, or semitoroidal compactifications of Type IV
domains \cite{looijenga2003compactifications-defined2} generalize
toroidal compactifications in several ways. By
\cite[Thm. 7.18]{alexeev2023compact} these are the normal
compactifications dominating the Baily-Borel compactification and
dominated by some toroidal compactification. They are defined by
collections of compatible semifans, one for each Baily-Borel
$0$-cusp. The data for the $1$-cusps is then uniquely determined. The
cones in semifans have rational generators but, unlike in fans, there
could be infinitely many generators, and the stabilizer groups of the
Type III cones may be infinite.

The generalized Coxeter semifans were defined in
\cite[Section~4D]{alexeev2023stable-pair} using the Wythoff
construction \cite{coxeter1935wythoffs-construction}, as follows. As 
above, let $W$ be a reflection group with a fundamental chamber $\ch$
and $G=\{\alpha_i\}$ be the corresponding Coxeter diagram. Divide the
vertices of $G$ into two complementary sets $I\sqcup J$ of
\emph{relevant} and \emph{irrelevant} roots. Let $W\irr$ be the
subgroup of $W$ generated by the irrelevant roots and let
\begin{math}
  \ch\gend = \cup_{h\in W\irr} h.\ch
\end{math}
The maximal dimensional cones in the semifan $\fF\gend$ are the
chamber $\ch\gend$ and its images under $W$. Another way to describe
$\fF\gend$ is that it is the coarsening of the Coxeter fan $\fF\cox$
obtained by removing the faces of the form
$\cap\alpha_j^\perp \cap\ch$ in which $\{\alpha_j,\ j\in J'\subset J\}$
is a collection of irrelevant roots.


In \cite[Sec. 9A]{alexeev2022mirror-symmetric} the authors defined a
specific semitoroidal compactification of the moduli space $\fhyp$ by
the collection $\fF\ram = \{ \fF\ram(18,0,0), \fF\ram(18,2,0) \}$ of
two semifans.  (Here, \emph{ram} stands for the ramification divisor.)
These are the generalized Coxeter semifans for the Coxeter diagrams of
Figure~~\ref{fig-k3-cusps} in which the irrelevant roots are those
that do not lie on the boundary of the square, resp. the triangle,
numbered respectively $0$--$15$ and $2$--$18$.
The main theorem of \cite{alexeev2022mirror-symmetric} for the moduli
space $\fhyp$ says that the normalization of the KSBA moduli
compactification $\ofhyp$ for the pairs $(X,\epsilon R)$ is this
semitoroidal compactification.

\begin{definition}
  The collection of semifans $\fF = \{\fF^k\}_{k=1,2,3,4,5}$,
  one for each $0$-cusp of $\fen$ is defined by intersecting the
  semifans $\fF\ram(\oT_\dP)$ for $\oT_\dP=(18,0,0)_1$,
  $(18,2,0)_1$ with the subspace $\oT_\enr=(10,10,0)_1$ and
  $(10,8,0)_1$ as in Section~\ref{sec:cusps}. 
\end{definition}

\begin{definition}\label{def:sterk-irrelevant-roots}
  In each of the folded Coxeter diagrams of Figures~\ref{fig-cusp2},
  \ref{fig-cusps1345}, call a root \emph{irrelevant} if it is obtained
  by folding of an irrelevant root in Figure~\ref{fig-k3-cusps},
  i.e. a root which does not lie on the boundary of the square,
  resp. the triangle.
\end{definition}

\begin{lemma}
  The semifans $\{\fF^k\}_{k=1,2,3,4,5}$ are the generalized Coxeter
  fans for the folded Coxeter diagrams of Figures~\ref{fig-cusp2},
  \ref{fig-cusps1345} with the irrelevant roots of
  Definition~\ref{def:sterk-irrelevant-roots}.
\end{lemma}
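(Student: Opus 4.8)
The plan is to reduce the statement to the Coxeter-fan case already treated. Recall from Section~\ref{sec:semitoroidal} that a generalized Coxeter semifan is the \emph{coarsening} of the Coxeter fan obtained by dissolving, in each Coxeter chamber and its translates, the walls $\alpha_j^\perp$ through the irrelevant simple roots; equivalently its maximal cones are the generalized chamber $\ch\gend=\bigcup_{h\in W\irr}h\cdot\ch$ and its images. So it is enough to check that the two operations ``slice by $\oT_\enr\otimes\bR$'' and ``dissolve the irrelevant walls'' commute, with the irrelevant walls of the folded diagram being exactly the slices of the irrelevant walls of $G(\oT_\dP)$. The starting point is Corollary~\ref{cor:folded-diagrams} together with Lemma~\ref{lem:all-diagrams-are-folded}: the slice $\fF\cox(\oT_\dP)\cap\oT_\enr\otimes\bR$ is the Coxeter fan of the folded diagram $G(\oT_\dP)^J$, whose simple roots are the folded vectors $(\alpha_i)_J$ and whose reflection group is $W(\Gamma_{\enr,2,e})$.

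Two points then remain. First, that folding respects the relevant/irrelevant partition: by inspection of Figures~\ref{fig-k3-cusps}, \ref{fig-cusp2}, \ref{fig-cusps1345}, a simple root of $\oT_\dP$ lies on the boundary of the square, resp.\ triangle, if and only if its fold does, and no node of $G(\oT_\dP)^J$ is simultaneously the fold of a boundary root and of an interior one; hence Definition~\ref{def:sterk-irrelevant-roots} genuinely partitions the nodes of the folded diagram, the irrelevant ones being the folds of the interior roots of $G(\oT_\dP)$. Second, the group- and wall-level matching: each reflection $w_{(\alpha_j)_J}$ in a folded irrelevant root is the restriction to $\oT_\enr\otimes\bR$ of a $J$-commuting element of the irrelevant reflection subgroup $W\irr$ of $W(\Gamma_{\dP,e})$ --- namely $w_{\alpha_j}$ when $J(\alpha_j)=\alpha_j$, and the commuting product $w_{\alpha_j}w_{J(\alpha_j)}$ when $\alpha_j\cdot J(\alpha_j)=0$, which restricts to $w_{(\alpha_j)_J}$ because $(\alpha_j)_J^2=2\alpha_j^2$ and $(\alpha_j)_J\cdot v=2\,\alpha_j\cdot v$ for $J$-invariant $v$ --- while conversely a $J$-commuting element of $W(\Gamma_{\dP,e})$ preserves $\oT_\enr\otimes\bR$. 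Since moreover $\alpha_j^\perp\cap\oT_\enr\otimes\bR=(\alpha_j)_J^\perp\cap\oT_\enr\otimes\bR$ for every simple root $\alpha_j$, dissolving $\alpha_j^\perp$ upstairs and then slicing gives precisely the dissolution of $(\alpha_j)_J^\perp$ downstairs (and nothing when $(\alpha_j)_J\perp\oT_\enr$, where it contributes no wall). Combining these, the slice of the generalized chamber for $\oT_\dP$ is the generalized chamber for $G(\oT_\dP)^J$, and translating by the reflection group one obtains $\fF^k=\fF\ram(\oT_\dP)\cap\oT_\enr\otimes\bR$ equal to the generalized Coxeter fan of $G(\oT_\dP)^J$ with the irrelevant roots of Definition~\ref{def:sterk-irrelevant-roots}.

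I expect the main obstacle to be the second point at the cusps with $\times$-ed nodes, namely cusps 3 and 5, where $J$ is not a diagram automorphism but is twisted by reflections in the interior roots $\alpha_{20}$, resp.\ $\alpha_1,\alpha_3,\dots,\alpha_{15}$. There one must first observe that $\oT_\enr\otimes\bR$ lies in the corresponding hyperplanes $\alpha_{20}^\perp$, resp.\ $\alpha_1^\perp\cap\dots\cap\alpha_{15}^\perp$ --- equivalently $\ell_{20}=0$, resp.\ $\ell_{2i+1}=0$ on $\ch^J$, as recorded in Proposition~\ref{kulikov-prop} --- so that the two Coxeter chambers $\ch$ and $w_{\alpha_j}\ch$ glued across the dissolved interior wall $\alpha_j^\perp$ have the \emph{same} intersection with $\oT_\enr\otimes\bR$; this is exactly what keeps the slice of the generalized chamber a single cone rather than overcounting it. Granting this, the convexity of the folded generalized chamber is inherited from that of its $\oT_\dP$-counterpart, a general feature of the Wythoff construction, and the remaining verification that faces match up is routine.
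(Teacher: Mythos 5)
Your proposal is correct and follows essentially the same route as the paper's (much terser) proof: the key facts are that $\alpha^\perp\cap\cC(\oT_\enr)=\alpha_J^\perp\cap\cC(\oT_\enr)$ for each simple root, and that irrelevant roots fold to irrelevant roots, so slicing by $\oT_\enr\otimes\bR$ commutes with dissolving the irrelevant walls; your extra verifications (the group-level identity $w_{(\alpha_j)_J}=w_{\alpha_j}w_{J(\alpha_j)}|_{\oT_\enr}$, the well-definedness of the relevant/irrelevant partition, and the $\times$-ed cusps) are correct elaborations of what the paper leaves implicit. One small slip: at cusp 5 the reflected roots $\alpha_1,\alpha_3,\dots,\alpha_{15}$ lie on the boundary of the square (so they are relevant, not ``interior''), but as you note they fold to zero and contribute no wall, so the argument is unaffected.
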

\begin{proof}
  By Lemma~\ref{lem:all-diagrams-are-folded}, for a root $\alpha$ of
  $\oT_\dP$, if $\alpha^\perp$ intersects the interior of the positive
  cone $\cC$ in $\oT_\enr$ then
  $\alpha^\perp\cap\cC = \alpha_J^\perp \cap\cC$ for the folded
  root $\alpha_J$. By definition, irrelevant roots fold to irrelevant
  roots. Thus, the fans $\fF^k$ are obtained from the Coxeter fans
  $\fF^k\cox$ by removing the faces of the form
  $\cap\alpha_j^\perp \cap\ch$ in which
  $\{\alpha_j,\ j\in J'\subset J\}$ is a collection of irrelevant
  folded roots. So these are the generalized Coxeter semifans as stated.
\end{proof}

\begin{lemma}\label{lem:fans24}
  The semifans $\fF^k$ are fans for $k=2,4$ and are not fans for $k=1,3,5$.
\end{lemma}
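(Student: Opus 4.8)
The plan is to reduce the statement to the finiteness of a reflection subgroup, and then to verify that finiteness case by case on the five folded Coxeter diagrams. I would use the Wythoff description of the generalized Coxeter semifan from \cite[Sec.~4D]{alexeev2023stable-pair}: its maximal cone is $\ch\gend = \bigcup_{h\in W\irr} h\cdot\ch$, where $W\irr$ is the standard parabolic subgroup generated by the reflections in the irrelevant roots, and the remaining maximal cones are the $W$-translates of $\ch\gend$. The first step is to record the criterion: $\fF^k$ is a genuine fan if and only if the subgroup $W\irr_k\subseteq W(G^k_r)$ generated by the reflections in the irrelevant folded roots is finite, equivalently if and only if the full subdiagram of $G^k_r$ spanned by those roots is a disjoint union of finite-type Coxeter diagrams. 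Indeed $\ch\gend = \ocC_\bQ \cap \{v : \alpha\cdot v\ge 0 \text{ for every relevant root } \alpha\}$, and since $W$ acts simply transitively on chambers, an element of $W$ stabilizes $\ch\gend$ exactly when it lies in $W\irr$, so $\mathrm{Stab}_W(\ch\gend)=W\irr$. When $W\irr$ is finite, $\ch\gend$ is a finite union of simplicial chambers forming a fundamental domain of the parabolic $W\irr$, hence a convex rational polyhedral cone with finite stabilizer; when $W\irr$ is infinite, the chambers $h\cdot\ch$ accumulate at an irrational point of $\partial\cC$, so $\ch\gend$ is neither rational polyhedral nor of finite stabilizer. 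Since $W(G^k_r)$ has finite index in $\Gamma_k=\Aut(G^k_r)\ltimes W(G^k_r)$ and $\fF^k$ has a single $W(G^k_r)$-orbit of maximal cones, local finiteness and finiteness of the number of $\Gamma_k$-orbits of cones hold automatically, so the only obstruction is the finiteness of $W\irr_k$.

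Next I would carry out the check. By Definition~\ref{def:sterk-irrelevant-roots} the irrelevant roots of $G^k_r$ are the folds of the roots interior to the square, resp.\ the triangle, of Figure~\ref{fig-k3-cusps}; unwinding the folding maps of Lemmas~\ref{lem:cusp2} and~\ref{lem:cusps1345} one reads off from Figures~\ref{fig-cusp2} and~\ref{fig-cusps1345} exactly which nodes of each folded diagram are irrelevant. For $k=2$ and $k=4$ I expect the subdiagram on the irrelevant folded roots to be a disjoint union of finite-type Coxeter diagrams (of types among $A_n$, $B_n$, $C_n$, $D_n$, $E_n$, $F_4$), hence $W\irr_k$ is finite and $\fF^k$ is a fan. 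For $k=1,3,5$ I would instead exhibit an affine subdiagram among the irrelevant folded roots---a thick edge $\widetilde A_1$, or one of $\widetilde B_n$, $\widetilde C_n$, $\widetilde D_n$, $\widetilde F_4$---which forces $W\irr_k$ to be infinite and so $\fF^k$ not to be a fan. As a consistency check one notes that the unfolded semifans $\fF\ram(18,0,0)$ and $\fF\ram(18,2,0)$ of \cite[Sec.~9A]{alexeev2022mirror-symmetric} are already not fans, their irrelevant roots $0$--$15$, resp.\ $2$--$18$, spanning an affine subdiagram; the foldings attached to cusps $1,3,5$ carry such a subdiagram to an affine subdiagram of irrelevant folded roots, whereas those attached to cusps $2,4$ collapse it to finite type.

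The main obstacle is purely the bookkeeping: correctly determining, in each of the five folded diagrams of Figures~\ref{fig-cusp2} and~\ref{fig-cusps1345}, which nodes are the images of the interior (irrelevant) roots under the relevant folding, and then reading off the Coxeter type of the subdiagram they span. Once Definition~\ref{def:sterk-irrelevant-roots} is unwound through the folding maps this is a routine inspection; I would cross-check the identifications with Sterk's pictures in \cite{sterk1991compactifications-enriques1} and with sagemath.
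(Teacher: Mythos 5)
Your proposal is correct and takes essentially the same route as the paper: the paper's proof is precisely the reduction ``$\fF^k$ is a fan iff the irrelevant reflection subgroup $W\irr$ is finite,'' followed by the observation that $W\irr\simeq S_2$ for $k=2$ and $S_2^2$ for $k=4$ (one, resp.\ two orthogonal, irrelevant folded roots), while for $k=1,3,5$ the irrelevant roots generate an infinite group, so the cones $\ch\gend$ have infinitely many generators. Your additional justification of the criterion via the Wythoff description is sound (modulo the minor point that when the irrelevant subdiagram is affine the chambers accumulate at the \emph{rational} isotropic vector, not an irrational point --- but the cone still fails to be finitely generated), and the remaining case-by-case identification you defer is exactly the routine inspection the paper records.
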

\begin{proof}
  Indeed, for $k=2$, resp. $k=4$, the irrelevant subgroup $W\irr=S_2$,
  resp. $S_2^2$, is finite.
  For the other $0$-cusps the
  groups $W\irr$ are infinite, the cones $\ch_k$ have infinitely many
  generators, and the corresponding polyhedra have infinite volumes.
\end{proof}

\begin{lemma}\label{lem:semitoroidal}
  The semitoroidal compactification of $\fen$ defined by the collection of
  semifans $\{\fF^k\}_{k=1,2,3,4,5}$ is toroidal over the $0$-cusps
  $2$ and $4$ and the $1$-cusps which are adjacent to them, and
  over $1$-cusp $35$. It is strictly semitoroidal over the remaining cusps.
\end{lemma}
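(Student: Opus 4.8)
The statement is local at each Baily--Borel stratum, so the plan is to recall, in the semitoroidal formalism of \cite{looijenga2003compactifications-defined2} and \cite[Sec.~5C, 7]{alexeev2023compact}, the criterion for $\ofen^\fF$ to be toroidal over a cusp $c$: namely, the portion of the defining semifan data ``visible from $c$'' must be an honest rational polyhedral fan with finite stabilizers beyond the intrinsic lattice translations of the cusp. For a $0$-cusp $k$ the visible data is the whole semifan $\fF^k$; for a $1$-cusp $\Pi$ it is the star of the Type~II ray $r_\Pi$ in the semifan of any adjacent $0$-cusp, these all agreeing by the compatibility built into a collection of semifans. Granting this, the $0$-cusp half of the lemma is immediate: $\ofen^\fF$ is toroidal over $0$-cusp $k$ iff $\fF^k$ is a fan, which by Lemma~\ref{lem:fans24} holds exactly for $k=2,4$ (there $W\irr=S_2$, resp.\ $S_2^2$, is finite) and fails for $k=1,3,5$ (there $W\irr$ is infinite, $\ch\gend_k$ has infinite volume and infinitely many facets), so $\ofen^\fF$ is strictly semitoroidal over $0$-cusps $1,3,5$.

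\textbf{$1$-cusps adjacent to $0$-cusps $2$ or $4$.} If $\Pi$ has $0$-cusp $2$ or $4$ in its closure (Notation~\ref{1-cusp-notation}), then the data visible from $\Pi$ may be read off inside $\fF^2$ or $\fF^4$; since the latter is a genuine finite fan (Lemma~\ref{lem:fans24}), the star of $r_\Pi$ in it is a finite fan, and by compatibility of the five semifans this \emph{is} the $1$-cusp data of $\ofen^\fF$. Hence $\ofen^\fF$ is toroidal over every $1$-cusp adjacent to $0$-cusp $2$ or $4$ (independently of whatever $0$-cusps from $\{1,3,5\}$ also lie in its closure, which only affects the separate, already-handled $0$-cusp strata). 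Concretely one can check this at the level of the blue parabolic subdiagrams: those appearing in Figure~\ref{fig-par2} and in the cases with first label $4$ in Figure~\ref{fig-pars} have no connected component built entirely out of irrelevant roots in the sense of Definition~\ref{def:sterk-irrelevant-roots}.

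\textbf{The remaining $1$-cusps.} For a $1$-cusp $\Pi$ lying only over $0$-cusps in $\{1,3,5\}$ I would run through the blue maximal parabolic subdiagrams $\Pi'\subset G^k$ of Figure~\ref{fig-pars} one by one. The star of $r_\Pi$ in the Coxeter fan is the alcove complex of the affine Weyl group $W(\Pi')$, and passing to $\fF^k$ coarsens it by the irrelevant reflections; the upshot (via Lemma~\ref{lem:all-diagrams-are-folded} and Corollary~\ref{cor:folded-diagrams}, which identify the folded irrelevant roots with the hyperplanes erased by the coarsening) is that $\ofen^\fF$ is toroidal over $\Pi$ iff no connected component of $\Pi'$ is composed entirely of irrelevant roots, equivalently the irrelevant part of each affine component of $\Pi'$ is of finite type. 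For $1$-cusp $35$ one verifies directly, from its parabolic subdiagram read off the folding symmetries $3$ and $5$ in Figures~\ref{fig-cusps1345}--\ref{fig-pars} (keeping in mind the extra reflections in $\alpha_{20}$, resp.\ $\alpha_1,\alpha_3,\dots,\alpha_{15}$), that every affine component meets the boundary of the square/triangle, i.e.\ retains a relevant root --- so $\ofen^\fF$ is toroidal over $1$-cusp $35$. For each of the other $1$-cusps over $\{1,3,5\}$ one exhibits an affine component of $\Pi'$ that is fully irrelevant; the corresponding infinite family of affine root hyperplanes is collapsed by the coarsening with no translation lattice left to quotient by, so $\ofen^\fF$ is strictly semitoroidal there. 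Cross-checking this list against the incidences of Figure~\ref{fig-sterk-allcusps} then yields exactly the cusps named in the statement.

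\textbf{Main obstacle.} The delicate point is the $1$-cusp analysis, and in particular isolating $1$-cusp $35$ as the unique exception among the $1$-cusps over $\{1,3,5\}$: combinatorially its parabolic subdiagram resembles those of several neighbouring $1$-cusps, and the relevant invariant --- whether, after folding, an affine component survives as entirely irrelevant --- must be extracted carefully from Figures~\ref{fig-cusps1345} and \ref{fig-pars} (and cross-checked, e.g.\ with the kind of \texttt{sagemath} computation already used in the proofs of Lemmas~\ref{lem:cusps1345} and the second proof of Corollary~\ref{cor:folded-diagrams}). The $0$-cusp part, and the ``adjacent to $2$ or $4$'' part of the $1$-cusp statement, are by contrast formal consequences of Lemma~\ref{lem:fans24} together with compatibility of the semifan data; the agreement of the whole cusp picture with Sterk's is then Figure~\ref{fig-sterk-allcusps} and \cite{sterk1991compactifications-enriques1}.
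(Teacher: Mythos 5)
Your proposal is correct and follows essentially the same route as the paper: Lemma~\ref{lem:fans24} handles the $0$-cusps $2$ and $4$ and the $1$-cusps adjacent to them, and the toroidality criterion at a $1$-cusp (no connected component of the maximal parabolic subdiagram consisting entirely of irrelevant vertices) is exactly the one the paper invokes before checking Figures~\ref{fig-par2} and \ref{fig-pars} to single out $1$-cusp $35$. Your write-up merely spells out the semitoroidal formalism in more detail than the paper does.
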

\begin{proof}
  By Lemma~\ref{lem:fans24}, this semitoroidal compactification
  is toroidal over the cusps 2 and
  4 and so also over the $1$-cusps adjacent to it. In general, the
  definition of the generalized Coxeter semifan above implies that the
  semitoroidal compactification is toroidal over a $1$-cusp exactly
  when the corresponding maximal parabolic diagram does not have a
  connected component consisting entirely of irrelevant
  vertices. Examining Figure~\ref{fig-pars} shows that in addition to
  the $1$-cusps adjacent to the $0$-cusps 2 and 4 there is just one
  more, for the $1$-cusp $35$. This completes the proof.   
\end{proof}

\begin{lemma}\label{lem:rays-semitoroidal-comp}
  For $k=1,2,3,4, 5$, the numbers of Type II + Type III
  divisors at the cusps of the semitoroidal compactification
  $\ofen^\fF$ are $2+0$, $2+7$, $2+7$, $4+7$, $3+0$, for a total of
  $6+21=27$ divisors.
\end{lemma}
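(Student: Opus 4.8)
The approach is to reduce the statement to a finite combinatorial count on the five folded Coxeter diagrams, in exactly the manner of Lemma~\ref{lem:rays-toroidal-comp}, but now taking into account the coarsening that passes from the Coxeter fans $\fF^k\cox$ to the generalized Coxeter semifans $\fF^k$. First I would recall the dictionary: for each $0$-cusp $k$ the boundary divisors of $\ofen^\fF$ lying over it are in bijection with the rays of $\fF^k$ modulo $\Gamma_k=\Aut(G^k_r)\ltimes W(G^k_r)$, the isotropic rays giving the Type~II divisors and the rays meeting the interior of the positive cone $\cC(\oT_\enr)$ giving the Type~III divisors (see \cite[Section~4D]{alexeev2023stable-pair}, \cite[Sec.~5C]{alexeev2023compact}). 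Since every ray of $\fF^k$ is a fortiori a ray of the finer fan $\fF^k\cox$, it is enough to revisit the $4,8,15,12,17$ Type~III rays and the $4,2,3,4,5$ Type~II rays of Lemma~\ref{lem:rays-toroidal-comp} and decide which survive the coarsening $\fF^k\cox\to\fF^k$.

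The key step is to spell out the survival criterion. By construction $\fF^k$ is obtained from $\fF^k\cox$ by deleting every face of the form $\bigcap_{j\in J'}\alpha_j^\perp\cap\ch$ in which $J'$ is a set of irrelevant roots (Definition~\ref{def:sterk-irrelevant-roots}); hence a ray $r$ of $\fF^k\cox$ persists in $\fF^k$ exactly when the irrelevant simple roots vanishing on $r$ do not already span the $9$-dimensional space $r^\perp\subset\oT_\enr\otimes\bR$. Concretely: a Type~III ray, cut out by a rank-$9$ elliptic subdiagram $E\subset G^k_r$, survives precisely when $E$ contains at least one relevant root; and a Type~II ray, whose link is a maximal parabolic subdiagram $\widetilde G=\bigsqcup_i\widetilde G_i$, survives precisely when some component $\widetilde G_i$ contains at least two relevant vertices --- this is because deleting one vertex from an affine diagram preserves its rank while deleting two strictly lowers it, so the irrelevant vertices of $\widetilde G$ span all of $r^\perp$ iff every $\widetilde G_i$ has at most one relevant vertex. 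The relevant/irrelevant labelling of folded roots is read directly off Definition~\ref{def:sterk-irrelevant-roots}: a folded root is irrelevant iff it is the fold of a root of Figure~\ref{fig-k3-cusps} not on the boundary of the square (resp.\ triangle); in particular the $\times$-ed roots used to fold at cusps~$3$ and~$5$ are irrelevant, whereas the folds of the boundary roots and the distinguished root $v$ of \eqref{close-up} arising at cusp~$2$ are relevant.

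With this in hand the rest is a bookkeeping exercise on the diagrams of Figures~\ref{fig-cusp2} and~\ref{fig-cusps1345}. Going through the rank-$9$ elliptic subdiagrams and discarding those all of whose vertices are irrelevant, and through the maximal parabolic subdiagrams (the blue subdiagrams of Figures~\ref{fig-par2} and~\ref{fig-pars}) and discarding those whose every component has at most one relevant vertex, I expect to be left with $2+0$, $2+7$, $2+7$, $4+7$, $3+0$ Type~II + Type~III divisors at cusps $1,\dots,5$, hence $6+21=27$ in total. I would cross-check this against Lemma~\ref{lem:fans24} --- at cusps~$2$ and~$4$, where $W\irr$ is finite, the Type~II count should be unchanged from Lemma~\ref{lem:rays-toroidal-comp}, which it is --- and against Sterk's data in \cite{sterk1991compactifications-enriques1}, and also confirm it with sagemath as done elsewhere in the paper. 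The main obstacle is precisely this last enumeration at the three ``large'' cusps $1$, $3$, $5$: there are many rank-$9$ subdiagrams to inspect, and the only genuinely delicate points are the relevant/irrelevant status of the exceptional folded roots ($v$ at cusp~$2$, and the $\times$-ed roots together with the requirement that the parabolic subdiagrams be invariant under the folding at cusps~$3$ and~$5$); once these are pinned down the check is entirely mechanical.
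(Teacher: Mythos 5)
Your overall strategy is the same as the paper's: the proof is exactly the enumeration of Lemma~\ref{lem:rays-toroidal-comp} revisited, discarding the rays that die under the coarsening $\fF^k\cox\to\fF^k$. The gap is in your survival criterion, and it is not a harmless slip --- it changes the answer. You assert that a ray $r$ survives iff the irrelevant simple roots vanishing on $r$ fail to span $r^\perp$; for a Type~III ray, cut out by a rank-$9$ elliptic subdiagram $E$ whose roots are linearly independent, this says $r$ is removed only when \emph{every} vertex of $E$ is irrelevant. The correct criterion, which is what the paper uses (and which is the one consistent with Lemma~\ref{lem:semitoroidal}), is that $E$ is discarded as soon as it has a \emph{connected component} consisting entirely of irrelevant vertices. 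The reason is that the face of $\ch\gend=\bigcup_{h\in W\irr}h.\ch$ containing $r$ in its relative interior is cut out by the $W_{J\cap E}$-orbit of the relevant roots of $E$; reflections in the irrelevant roots propagate a relevant root across its own connected component (so any component meeting $I$ contributes its full span), but they never reach an all-irrelevant component, whose span therefore gets absorbed into the interior of the coarsened cone. Your spanning condition tests whether the irrelevant roots alone span $r^\perp$, which is a strictly stronger requirement for removal.

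The discrepancy is visible already in the stated numbers. At cusps $2$ and $4$ the irrelevant subgroups are $W\irr=S_2$ and $S_2^2$ (Lemma~\ref{lem:fans24}), i.e.\ there are only one, respectively two mutually orthogonal, irrelevant folded roots. Under your criterion no rank-$9$ elliptic subdiagram can then be discarded (none is all-irrelevant), so you would obtain $2+8$ and $4+12$, whereas the lemma claims $2+7$ and $4+7$: the discarded diagrams are precisely those containing an irrelevant root as an \emph{isolated} vertex. The same issue infects your Type~II criterion (``some component has at least two relevant vertices''): the correct condition is again that no affine component be entirely irrelevant, which is weaker. Two smaller points: the $\times$-ed roots at cusps $3$ and $5$ do not fold to irrelevant roots --- they lie in the $(-1)$-eigenspace of $J$ and simply do not appear in the folded diagram; and $v$ in \eqref{close-up} is an isotropic vector, not a root. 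With the corrected criterion the rest of your plan (a mechanical pass through the rank-$9$ elliptic and maximal parabolic subdiagrams of Figures~\ref{fig-cusp2}, \ref{fig-cusps1345}, \ref{fig-par2}, \ref{fig-pars}) is exactly the paper's proof.
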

\begin{proof}
  This is obtained by removing from the list of subgraphs in
  Lemma~\ref{lem:rays-toroidal-comp} the graphs containing a connected
  component consisting of irrelevant vertices.
\end{proof}

\subsection{The main theorem} 
\label{sec:main-thm}

By Section~\ref{sec:ksba-recall} there exists a compact moduli space
$\ofen$ whose points correspond to the pairs $(Z,\epsilon R_Z)$ of
Enriques surfaces with numerical polarization of degree~$2$ and their
KSBA stable limits, for any $0<\epsilon\ll 1$. This is the
closure of $\fen$ in the KSBA moduli space of stable pairs. 

\begin{theorem}\label{thm:main}
  The normalization of $\ofen$ is semitoroidal for the collection of
  semifans $\{\fF^k\}_{k=1,2,3,4,5}$ of
  Section~\ref{sec:semitoroidal}. It is toroidal over the $0$-cusps
  $2$ and $4$, the $1$-cusps which are adjacent to them, and
  over $1$-cusp $35$. It is strictly semitoroidal over the remaining cusps.
\end{theorem}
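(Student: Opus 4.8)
The plan is to deduce Theorem~\ref{thm:main} from the K3 case proved in \cite{alexeev2022mirror-symmetric} by restricting to the involution-invariant locus, exactly as the combinatorial side was carried out in Sections~\ref{sec:cusps}--\ref{sec:semitoroidal}. The key input is the immersion $j\colon \fen\to\fhyp$ of Lemma~\ref{lem:fen-closed-in-fhyp}, together with the main theorem of \cite{alexeev2022mirror-symmetric} identifying the normalization of $\ofhyp$ with the semitoroidal compactification $\ofhyp^{\fF\ram}$ for the collection $\fF\ram=\{\fF\ram(18,0,0),\fF\ram(18,2,0)\}$. First I would show that the KSBA-stable limit of a degeneration $(\cZ^*,\epsilon\cR_\cZ^*)\to C^*$ in $\ofen$ is computed by the half-divisor models of Proposition~\ref{half-kulikov-prop}, via Corollary~\ref{hk-cor}: that is, ${\rm Proj}_C\bigoplus_{n\ge0}H^0(\cZ,n\cR_\cZ)$ is the stable limit. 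This follows because $(\cZ,\cR_\cZ)$ has slc fibers with $K_\cZ+\epsilon\cR_\cZ$ relatively big and nef and $\cR_\cZ$ containing no log canonical centers, so the relative log canonical model over $C$ exists and is the KSBA limit by the general theory of \cite{kollar2023families-of-varieties, alexeev2023stable-pair}.

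Next I would transfer the combinatorial-constancy statement. By Theorem~\ref{dP-kulikovs} and Proposition~\ref{kulikov-prop}, the divisor models $(\cX,\cR)\to(C,0)$ for Enriques K3 degenerations are exactly those whose polarized $\ias$ $(B(\ell),R_{\rm IA})$ is invariant under the folding involution $\iota_{\enr,{\rm IA}}$, and by Theorem~\ref{enr-kul} the Enriques involution $\iota_\enr$ extends to $\cX$ preserving $\cR$; passing to the quotient gives the half-divisor model whose dual complex is $\Gamma(\cX_0)/\iota_{\enr,{\rm IA}}$ as described. Now by \cite[Thm.~9.3]{alexeev2023compact} the semifan $\fF^k$ recording the compactification is determined by which collections of Picard--Lefschetz transformations $\lambda\in\ch^J$ keep the combinatorial type of the stable pair $(\overline{\cZ}_0,\epsilon\overline{\cR}_{\overline{\cZ}_0})$ constant; since this quotient pair is determined combinatorially by $(\Gamma(\cX_0),R_{\rm IA})$ and its $\iota_{\enr,{\rm IA}}$-action via Proposition~\ref{half-kulikov-prop} and Corollary~\ref{hk-cor}, combinatorial constancy for the Enriques pair is equivalent to combinatorial constancy for the K3 pair along the sublocus $\oT_\enr\subset\oT_\dP$. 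By the main theorem of \cite{alexeev2022mirror-symmetric} this is governed by the generalized Coxeter semifan $\fF\ram(\oT_\dP)$; restricting to $\oT_\enr$ and invoking Lemma~\ref{lem:all-diagrams-are-folded} (so that a wall $\alpha^\perp$ meeting the interior of $\cC(\oT_\enr)$ equals $\alpha_J^\perp$ for the folded root) together with Definition~\ref{def:sterk-irrelevant-roots}, one gets precisely the folded generalized Coxeter semifan $\fF^k$. Thus the normalization of $\ofen$ is the semitoroidal compactification $\ofen^\fF$.

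The final clauses about where the compactification is toroidal versus strictly semitoroidal then follow directly from Lemma~\ref{lem:semitoroidal}: the semifan $\fF^k$ is a genuine fan precisely for $k=2,4$ (Lemma~\ref{lem:fans24}), hence toroidal there and over the adjacent $1$-cusps, and additionally toroidal over the $1$-cusp $35$ because the corresponding maximal parabolic subdiagram has no connected component consisting entirely of irrelevant vertices (inspection of Figure~\ref{fig-pars}), and strictly semitoroidal over the remaining cusps.

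I expect the main obstacle to be the reduction in the preceding paragraph: namely, verifying cleanly that combinatorial constancy of the Enriques stable pair is \emph{equivalent} to—not merely implied by—combinatorial constancy of the K3 pair restricted to the invariant sublattice. One direction is immediate since the quotient construction is functorial; the subtle point is that no extra collapsing can occur on the Enriques side that does not already occur upstairs, which requires knowing that the quotient map $(\overline{\cX}_0,\epsilon\overline{\cR}_0)\to(\overline{\cZ}_0,\epsilon\overline{\cR}_{\overline{\cZ}_0})$ is compatible with taking log canonical models and does not identify components whose K3 preimages remain distinct as $\lambda$ varies. This is handled by the explicit description of the fixed locus of $\iota_{\enr,0}$ in Proposition~\ref{half-kulikov-prop} (it is finite, lying on double curves of the Enriques equator, with local model \eqref{double-sing}), which shows the quotient is a local analytic operation commuting with the formation of $\bigoplus_n H^0(n\cR)$; combined with \cite[Thm.~8.11(5), Cor.~8.13]{alexeev2023compact} this pins down the combinatorial type of the Enriques stable pair as an honest function of $\lambda\in\ch^J$, completing the identification of the semifan.
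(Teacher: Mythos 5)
Your overall strategy coincides with the paper's: restrict the semitoroidal description of $\ofhyp^\nu$ from \cite{alexeev2022mirror-symmetric} to the Noether--Lefschetz locus of Lemma~\ref{lem:fen-closed-in-fhyp}, quotient the universal family by the extended Enriques involution, identify the resulting semifans with the folded ones via Lemma~\ref{lem:all-diagrams-are-folded} and Definition~\ref{def:sterk-irrelevant-roots}, and read off the toroidal/strictly semitoroidal dichotomy from Lemmas~\ref{lem:fans24} and~\ref{lem:semitoroidal}. The one place where you diverge is the crucial ``no further coarsening'' step, and this is also where your argument is soft. You invoke \cite[Thm.~9.3]{alexeev2023compact} and assert that combinatorial constancy of $(\overline{\cZ}_0,\epsilon\overline{\cR}_{\overline{\cZ}_0})$ is \emph{equivalent} to combinatorial constancy of $(\overline{\cX}_0,\epsilon\overline{\cR}_0)$ along $\oT_\enr\subset\oT_\dP$; but the justification you give (the quotient commutes with forming $\bigoplus_n H^0(n\cR)$, the fixed locus is finite) only establishes that the Enriques type is a \emph{function} of the K3 type, i.e.\ the easy implication. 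It does not by itself exclude that two adjacent cones of $\fF^k$ carrying distinct K3 types yield the same Enriques type, which is exactly what a strict coarsening would look like. The paper avoids having to prove the full equivalence: by \cite[Thm.~7.18]{alexeev2023compact} the normalization of $\ofen$ is some semifan \emph{coarsening} of $\{\fF^k\}$, so it suffices to match the maximal cones, and this is done with the single concrete criterion extracted from Proposition~\ref{half-kulikov-prop} and Corollary~\ref{hk-cor} that $(Z,\epsilon R_Z)$ has the maximal number of double curves if and only if $(X,\epsilon R)$ does; a strict coarsening would force a codimension-one cone of non-maximal K3 pairs to have maximal Enriques quotients, a contradiction. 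To close your version you would either need to prove the full equivalence (plausible, since $\iota_\enr$-equivariant smoothings of double curves descend to smoothings of their images, but this needs to be said), or make the same reduction to maximal cones. I would also note that applying \cite[Thm.~9.3]{alexeev2023compact} directly to the Enriques moduli space presupposes that its general machinery (recognizability of $R_Z$, existence of a dominating toroidal compactification) transfers from the K3 setting, which the paper's sandwiching argument via $B^\nu\to\ofen$ obtains for free.
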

\begin{proof}

  The main theorem of \cite{alexeev2023compact} is that the
  normalization of the KSBA compactification of K3 pairs
  $(X,\epsilon R)$ for a \emph{recognizable divisor} $R$ is
  semitoroidal and by \cite{alexeev2021nonsymplectic} the ramification
  divisor is recognizable. The main theorem of
  \cite{alexeev2022mirror-symmetric} for $\fhyp$ is that this semifan
  is the ramification semifan $\fF\ram$ of
  Section~\ref{sec:semitoroidal}.  
  
  Consider the universal family $(\cX,\epsilon \cR)\to \ofhyp$ of
  KSBA-stable pairs over the compactified moduli stack. Denote the
  closure of the image of $\fen$ in $\ofhyp$ by~$B$. Then, the
  pullback of the universal family $(\cX_B,\epsilon \cR_B)\to B$ is a
  family whose general fiber is a pair $(X,\epsilon R)$ of an Enriques
  K3 surface with the ramification divisor $R$ of the del Pezzo
  involution. By uniqueness of KSBA-stable limits, the Enriques
  involution on the general fiber extends to an involution on the
  universal family $(\cX_B,\epsilon \cR_B)$. Taking the quotient gives
  a family $(\cZ,\epsilon\cR_{\cZ})\to B$ over a compact base,
  extending the universal family of Enriques surfaces
  $(Z,\epsilon R_Z)$ with divisor.  
  
  By Lemma~\ref{lem:fen-closed-in-fhyp}, the normalization $B^\nu$ of $B$ is a
  compactification of $\fen$ admitting a universal family of pairs. So
  we have a classifying morphism $B^\nu\to \ofen$.  Furthermore,
  $B^\nu$ is simply the semitoroidal compactification of the
  Noether-Lefschetz locus $B$, induced by the semifan $\fF\ram$ which
  gives the normalization $\ofhyp^\nu$.
  This gives a family of KSBA stable pairs over the induced
  compactification $\ofen$, whose normalization by
  Section~\ref{sec:semitoroidal} is the compactification
  $\ofen^\fF$ for the collection of semifans $\fF =
  \{\fF^k\}_{k=1,2,3,4,5}$. 

To prove the first statement, it remains to show that $B^\nu\to \ofen$
is a finite map. 
Equivalently, we do not lose moduli when we quotient a stable K3 pair $(X,\epsilon R)$ by $\iota_\enr$.
By \cite[Thm. 7.18]{alexeev2023compact}, the normalization of $\ofen$ is given by some semifan coarsening
$\fF^k$ and so it suffices to prove that the maximal cones of this semifan are the same
as the maximal cones $\fF^k$.

The explicit description of Kulikov and stable models from
Proposition \ref{half-kulikov-prop} and Corollary \ref{hk-cor} imply the following fact:
a degeneration of $(Z,\epsilon R_Z)$ has a maximal
number of double curves if and only if $(X,\epsilon R)$ does. 
But if the normalization of $\ofen$ were 
given by any strict coarsening of $\{\fF^k\}$, 
there would be some codimension one cone of some $\fF^k$ 
that parameterized a $1$-dimensional family of non-maximal pairs $(X,\epsilon R)$,
whose Enriques quotients $(Z,\epsilon R_Z)$ had the maximal number of double
curves. This is impossible, so we conclude the first statement.

The last statement follows by Lemma~\ref{lem:semitoroidal}.
\end{proof}

\section{ABCDE surfaces}
\label{sec:abcde}

The paper \cite{alexeev17ade-surfaces} classified the surfaces which may
appear as irreducible components of KSBA stable degenerations of K3
surfaces with a non-symplectic involution $(X,\iota)$ for the pairs
$(X,\epsilon R)$, where $R$ is a component of genus $g\ge2$ of the ramification divisor of the double
cover $X\to X/\iota$. In particular, the irreducible components of
stable pairs $(X,\epsilon R)$ in \cite{alexeev2023stable-pair,
  alexeev2022mirror-symmetric} are all of these types.  
 The surfaces appearing in Type III degenerations naturally
correspond to Dynkin diagrams $A_n$, $D_n$, $E_n$, and those appearing
in Type II degenerations to the affine $\wA_n$, $\wD_n$, $\wE_n$
diagrams. Both come with decorations addressing parity and some extra
data, as in Section~\ref{sec:decorations} below.

On the other hand, it is well known that the non simply laced Dynkin
diagrams of BCFGH types can be naturally described by ``folding''
ADE diagrams by automorphisms.
After recalling the ADE
surfaces relevant to this paper, we define new B and C type
surfaces obtained from them as quotients by involutions.

\smallskip

The surfaces in \cite{alexeev17ade-surfaces} come in pairs
$\pi\colon (X,D+\epsilon R) \to (Y,C+\fraceps B)$, fully
analogous to Diagram~\eqref{eq:basic} in the introduction. Here:
\begin{enumerate}
\item $(Y,C)$ is a log del Pezzo pair of index $2$ with reduced
  boundary $C$ and a nonempty nonklt locus.
  The divisor $B\in |-2(K_Y+C)|$ is ample Cartier,
  and the pair $(Y, C+\fraceps B)$ is KSBA stable; in
  particular it is log canonical.
\item $\pi\colon X\to Y$ is the index-$1$ cover for
  $K_Y+C$. Explicitly, $X = \Spec \cA$, where $\cA = \cO_Y\oplus
  \cO_Y(K_Y+C)$ is an $\cO_Y$-algebra with the multiplication defined
  by an equation of $B$. One has $K_X+D\sim 0$, $R = \frac12\pi^*(B)$
  is ample, the pair $(X,D+\epsilon R)$ is KSBA stable and it has a
  nonempty nonklt locus.
\end{enumerate}
By the Riemann-Hurwitz formula, one has
\begin{equation}\label{eq:hurwitz}
  K_X + D + \epsilon R = \pi^*\left(K_Y+C + \fraceps B\right)
\end{equation}

By \cite[Lemma 2.3]{alexeev17ade-surfaces}, the pairs
$(Y, C+\fraceps B)$ and $(X,D+\epsilon R)$ are in a
one-to-one correspondence. To distinguish them we will call the former
\emph{del Pezzo ADE surfaces} and the latter \emph{anticanonical
  ADE surfaces}. 

\subsection{Type III ADE surfaces}
\label{sec:ade-type3}

The only ADE surfaces needed in this paper are the ones that appear
on the boundary of the KSBA compactification $\ofhyp$. They are described in
detail in the last section of \cite{alexeev2022mirror-symmetric}. Most
of them are easy: they are hypersurfaces in projective
toric varieties in a way very similar to the construction 
in Section~\ref{sec:main-diagram}.

\begin{figure}[htbp]
  \includegraphics[width=360pt]{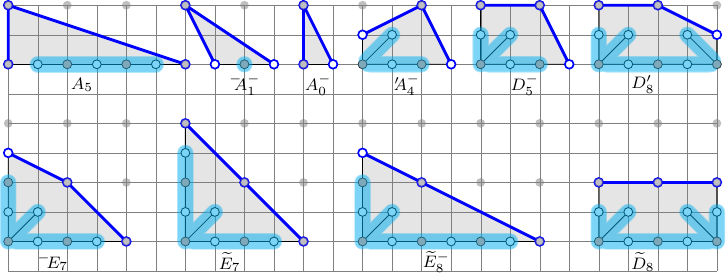}
  \smallskip
  \caption{Some ADE surfaces}
  \label{fig-ade}
\end{figure}

As an example, consider one of the lattice polytopes $Q$ in Figure~\ref{fig-ade} with
an ADE Dynkin diagrams fitted into it. The polytopes are in $\bZ^2$,
and the gray dots indicate the sublattice $2\bZ^2$. The Type III
polytopes for the ordinary elliptic ADE diagrams have a distinguished
vertex with two bold blue sides emanating from it. In the Type II
polytopes for the extended $\wD\wE$ diagram there is a distinguished point
in the interior of the bold blue segment. Together with the ends of
this segment, it makes three special vertices.

By the standard construction, one associates to $Q$ a toric
variety $V_Q$ with an ample line bundle $L_Q$.
Let us define a section of $L_Q$ as the
following sum of monomials in $Q\cap\bZ^2$. For Type III, each of the
three special vertices above gets coefficient $1$. The coefficients of
the vertices in the highlighted Dynkin diagrams are arbitrary numbers
$a_i\in \bC$. The other coefficients are zero. Concretely:

\begin{enumerate}
\item[($A_5$)] $f= (1 + y^2 + x^6) + \sum_{i=1}^5 a_ix^i$
\item[($\mA_1^-$)] $f=(x + y^2 + x^3) + a_1x^2$
\item[($A_0^-$)] $f=1 + y^2 + x$
\item[($\pA_4^-$)] $f=(y + x^2y^2 + x^3) + a_1xy+a_2 + a_3x+a_4x^2$
\item[($D_5^-$)] $f=(y^2 + x^2y^2 + x^3) + a_1xy + a_2y + a_3+a_4x+a_5x^2$
\item[($D'_8$)] $f=(y^2 + x^2y^2 + x^4y) + a_1xy + a_2y + \sum_{i=0}^4
  a_{i+3}x^i + a_8x^3y$
\item[($\mE_7$)] $f=(y^3 + x^2y^2 + x^4) + a_1xy + a_2y^2 + a_3y+ a_4 + a_5x +
  a_6x^2 + a_7x^3$
\end{enumerate}

The corresponding del Pezzo ADE surface is the toric variety $Y=V_Q$
together with the boundary $C=C_1+C_2$ for the two blue sides, and the
divisor $B$ is $(f)$. Combinatorially the condition $B \sim -2(K_Y+C)$
is equivalent to the condition that the other sides of $Q$ have
lattice distance~$2$ from the distinguished point.

We now put these polytopes in $\bZ^2 \times 0 \subset \bZ^3$.
Let $p_0$ be the position of the distinguished vertex, we then add another vertex at the point $p_0 + (0,0,2)$ to which we associate the monomial $z^2$.
Let $P$ be the pyramid with the
apex at the new vertex and with base $Q$. Associated to it we have a
$3$-dimensional polarized toric variety $(V_P,L_P)$ and a section
$z^2 + f$ of $L_P$. An anticanonical ADE surface $X$ is the zero set
of this section, so it is a hypersurface in $V_P$. It comes with a del
Pezzo involution
\begin{math}
  \idp\colon (x,y,z) \to (x,y,-z),
\end{math}
the quotient map is $\pi\colon X\to X/\idp = Y$, the boundary is
$D=\pi\inv(C)$, and the ramification divisor is $R=\pi\inv(B)$. 

Varying the free coefficients $a_i$ we get a family over $\bC^n$,
where $n$ is the rank of the Dynkin diagram. This $\bC^n$ is the
quotient of the algebraic torus $\Hom(\Lambda^*, \bC^*)\simeq (\bC^*)^n$ by the
Weyl group $W(\Lambda)$ for the ADE root lattice $\Lambda$ with the
weight lattice $\Lambda^*$. So it naturally comes from a family over a torus.

\subsection{Decorations}
\label{sec:decorations}

Because of $z^2$ and the double cover, the
vertices in $2\bZ^2\subset\bZ^2$ are clearly distinguished; let's call them even.
When the end of a bold blue edge is even, this edge is {\it long}, of lattice
length~$2$. Then we use no decorations. When this end is odd, the edge
is {\it short}, of lattice length~$1$. To indicate that it is short, we use a minus
or a prime sign. We also use primes to distinguish shapes where the
long leg pokes into the interior of~$Q$.

The classification of del Pezzo ADE surfaces $(Y,C+\fraceps B)$ in
\cite{alexeev17ade-surfaces} is divided into \emph{pure} and
\emph{primed} shapes. The surfaces for the pure shapes are all
toric. The surfaces for \emph{some} of the primed shapes are toric,
but not in general. They are obtained from pure shapes by making a
blow up at a point $x\in D\cap R$ on $X$, resp. a weighted blowup at a
point $y\in C\cap B$ in $Y$.
For each side $D_1, D_2$, the set $D_i\cap R$ is
either a single point (if the side is short) or two points (if it is
long). E.g. priming $A_n$ on a long side once gives $A'_n$ and twice
gives $A''_n$. Priming $A^-_n$ on a short side is denoted by $A^+_n$.

The blow up disconnects $D_i$ from $R$ at that point. If all points in
$D_i\cap R$ are blown up, for the strict preimages we have $D'_i
\cdot R'=0$. In this case the linear system $|mR'|$ for $m\gg0$
contracts $D'_i$ and the corresponding ADE surface has fewer
boundary components. Thus, the surfaces e.g.~for the shapes $A''_n$
and $A^+$ have only one boundary component, and for the shapes
$\ppA_n''$, $\ppA_n^+$, $\plA_n^+$ have zero boundary components. 

\subsection{Type II ADE surfaces}
\label{sec:ade-type2}

The construction for the Type II polytopes is similar. The ends of the
bold blue edge have coefficients $1$ in $f$, and the distinguished interior
point has coefficient $\lambda\in\bC$. For clarity, in
Figure~\ref{fig-ade} one has
\begin{enumerate}
\item[($\wE_7$)] $f= (y^4 + \lambda x^2y^2 + x^4) +
  a_1 xy + a_2y^3+a_3y^2 + a_4y+ a_5 + a_6x + a_7x^2 + a_8x^3$
\item [($\wE_8^-$)] $f = (y^3+\lambda x^2y^2 + x^6) +
  a_1xy + a_2y^2+a_3y + \sum_{i=0}^5 a_{4+i}x^i$
\item [($\wD_8^-$)] $f = (y^2+\lambda x^2y^2 + x^4y^2) +
  a_1xy + a_2x^3y + a_4x^4y + \sum_{i=0}^4 a_{i+5}x^i$
\end{enumerate}
The coefficients for the nodes of the extended Dynkin diagram are
arbitrary numbers $a_i\in\bC$, not all of them zero, and they are now
treated as homogeneous coordinates of weight equal to the lattice
distance from the bold blue edge. Thus, for a fixed $\lambda$ one gets
a family of sections $z^2+f$ of $L_P$ and a family of anticanonical
KSBA stable pairs $(X,D+\epsilon R)$ parameterized by a weighted
projective space. For $\wE_7$ it is $\bP(1^2,2^3,3^2,4)$, for $\wE_8$
it is $\bP(1,2^2,3^2,4^2,5,6)$, and for $\wD_{2n}$ it is
$\bP(1^4, 2^{2n-3})$. The restriction of $z^2+f$ to the divisor
corresponding to the bold blue line gives a double cover of $\bP^1$
which is an elliptic curve. Varying $\lambda$ we get a family of
$\wA\wD\wE$ surfaces parameterized by a bundle of weighted projective
spaces over the $j$-line.  This is the same bundle of weighted projective
spaces that appeared in \cite{looijenga1976root,
  pinkham1977simple-ellitic, looijenga1978semi-universal}.

\smallskip

The $\wA_{2n-1}$ surfaces do not directly correspond to polytopes.
These
surfaces are double covers of cones over elliptic curves branched in a
bisection. The easiest description, closest to toric is to use the Tate
curve. For each $i\in \bZ_{2n}$ define the theta function $\theta_i$
as the formal power series
\begin{displaymath}
  \theta_i = \sum_{k\equiv i\, ({\rm mod\ }{2n})} q^{k(k-1)/2} x^k
\end{displaymath}
It converges for any $q\in \bC^*$ with $|q|<1$ and defines a section
of $L^2$, where $L$ is an ample line bundle of degree $n$ on the
elliptic curve $E_q = \bC^*/q^{\bZ}$. For any $c_i\in \bC$ not all
zero, $g(x) = \sum c_i\theta_i$ is a nonzero section of $L^2$ and
$f(x,y) = y^2 + g(x)$ is a section on the square of the tautological
line bundle on $\wY = \bP(\cO\oplus L)$. It also defines a section of
a line bundle on the surface $Y$ that is a cone over $E$, obtained by
contracting an exceptional section of $\wY$. Finally,
$z^2 + f(x,y)$ defines a double cover $X\to Y$ and the covering
involution $\idp$ is
\begin{math}
  (x,y,z)\to (x,y,-z).
\end{math}

\subsection{Anticanonical ADE surfaces with two commuting involutions}
\label{sec:ade-two-involutions}

Let $(X,D)$ be a log canonical pair with $K_X+D\sim 0$. Pick a
generator $\omega$ of the space $H^0(\cO(K_X+D))=\bC$. Just as for K3 surfaces,
an involution $\iota$ is called symplectic if $\iota^*\omega=\omega$
and nonsymplectic if $\iota^*\omega=-\omega$. By looking at a local
equation $dx\wedge\frac{dy}y$ of $\omega$ near the boundary, it is
easy to see that for a nonsymplectic involution the quotient map
$X\to X/\iota$ is not ramified along any irreducible component of $D$.

\begin{proposition}\label{prop:two-covers}
  Let $\pi\colon (X,D+\epsilon R)\to (Y,C+\fraceps B)$ be the
  anticanonical and del Pezzo ADE surfaces, and $\idp$ be the
  anticanonical involution such that $Y=X/\idp$. Suppose that
  $\ien\colon X\to X$ is another nonsymplectic involution commuting
  with $\idp$ such that $\idp$ and the induced involution
  $\tau\colon Y\to Y$ both have finite fixed sets. Then
  there exists a diagram of log canonical pairs 
  \begin{equation}\label{eq:basic2}
    \begin{tikzcd}
      (X,D+\epsilon R) \arrow{d}{\pi}
      \arrow[swap]{r}{\psi}
      \arrow[bend left=15,swap]{rr}{\psi'}
      & (Z,D_Z + \epsilon R_Z) \arrow{d}{\rho} 
      & (Z',D_{Z'} + \epsilon R_{Z'}) \arrow{dl}{\rho'} 
      \\
      (Y,C +\fraceps B) \arrow[swap]{r}{\varphi} 
      & (W,C_W + \fraceps B_W)
    \end{tikzcd}    
  \end{equation}
  in which
  \begin{enumerate}
  \item $\psi\colon X\to Z$ is the quotient by $\ien$ and
    $\psi'\colon X\to Z'$ is the quotient by the symplectic involution
    $\inik=\idp\circ\ien$.
  \item $R_Z=\frac12\rho^*(B_W)$ and
    $R_{Z'}=\frac12\rho'{}^*(B_W)$ are reduced divisors and one has
    $R = \psi^*(R_Z) = \psi'{}^*(R_{Z'})$.
  \item $D_Z=\rho^*(C_W)$ and $D_{Z'}=\rho'{}^*(C_W)$ are
    reduced divisors and one has $D = \psi^*(D_Z) = \psi'{}^*(D_{Z'})$.
  \item $(W,C_W+\fraceps B_W)$ is a del Pezzo ADE surface, and
    $(Z',D_{Z'}+\epsilon R_{Z'})$ is an anticanonical ADE surface
    which is its index-$1$ cover.
  \item     For any $\epsilon$ one has
    \begin{eqnarray*}
      &K_X + D + \epsilon R 
        = \psi^*(K_Z + D_Z + \epsilon R_Z) 
        = \psi'{}^*(K_{Z'} + D_{Z'} + \epsilon R_{Z'})  \\
      &K_Z + D_Z + \epsilon R_Z = \rho^*(K_W + C_W + \fraceps B_W)\\
      &K_{Z'} + D_{Z'} + \epsilon R_{Z'} = \rho'{}^*(K_W + C_W + \fraceps B_W)
    \end{eqnarray*}

  \item $2(K_Z+D_Z)\sim 0$ but $K_Z+D_Z\not\sim 0$.
  \item $\rho'$ is branched in $C_W$ and a finite subset of $\Branch(\varphi)$.
  \item $\rho$ is branched in $C_W$, a finite subset of
    $\Branch(\varphi)$, and the irreducible components of $C_W$ which
    are part of the branch locus of $Y\to W$.
  \item For any $p\in\Branch(\varphi)\setminus C_W$, one has
    $p\in \Branch(\rho)$ iff $p\notin \Branch(\rho')$.
  \end{enumerate}

\end{proposition}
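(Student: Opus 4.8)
The plan is to realize the whole diagram \eqref{eq:basic2} as the diagram of quotients of $X$ by the Klein four-group $G=\{1,\idp,\ien,\inik\}$, $\inik=\idp\circ\ien$, exactly as \eqref{eq:basic} and Remark~\ref{rem:two-dual-covers} do for the main diagram, and then to verify the nine assertions in turn. Set $Y=X/\idp$, $Z=X/\ien$, $Z'=X/\inik$, $W=X/G=Y/\tau$; these together with the four quotient maps form a commutative square, and since the subgroups $\langle\idp\rangle$ and $\langle\ien\rangle$ meet trivially, $X=Y\times_W Z$ by Galois descent. The first step is the parity computation on a generator $\omega$ of $H^0(\cO_X(K_X+D))$: as $\idp$ and $\ien$ are nonsymplectic, $\idp^*\omega=\ien^*\omega=-\omega$, hence $\inik^*\omega=+\omega$, so $\inik$ is symplectic. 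Combined with the fixed-set hypotheses, and with the fact that $\inik$, being symplectic, also has a finite fixed set, this shows $\psi\colon X\to Z$ and $\psi'\colon X\to Z'$ are \'etale in codimension~$1$, while $\pi$, $\varphi$, and (by the same parity argument on $Z$, $Z'$) $\rho$, $\rho'$ are genuine double covers with divisorial branch loci. I would also record that $\ien$ preserves the canonical boundary $D$ and the divisor $R$ --- the latter because $\ien(R)$ is the divisorial fixed locus of $\ien\idp\ien^{-1}=\idp$, namely $R$.

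Items (1)--(3) are then formal: $\psi,\psi'$ \'etale in codimension~$1$ forces $\psi^*D_Z=D=\psi'{}^*D_{Z'}$ and $\psi^*R_Z=R=\psi'{}^*R_{Z'}$ with all four divisors reduced, and $D_Z=\rho^*C_W$, $R_Z=\tfrac12\rho^*B_W$ (and the primed analogues) hold by the very definitions of the branch and ramification divisors of $\rho,\rho'$. The $K$-formulas (5) come from Riemann--Hurwitz: \'etaleness in codimension~$1$ gives $K_X+D=\psi^*(K_Z+D_Z)=\psi'{}^*(K_{Z'}+D_{Z'})$ at once, and the remaining two identities are \eqref{eq:hurwitz} applied to $\rho$ and $\rho'$, whose ramification data include $C_W$ as well as $B_W$. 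Since $(X,D+\epsilon R)$ is KSBA stable, $K_X+D+\epsilon R$ is ample; pushing this down the finite maps makes $K_Z+D_Z+\epsilon R_Z$ and $K_W+C_W+\fraceps B_W$ ample, and this, together with descent of slc-ness, index~$2$, reducedness of $C_W$, the nonklt condition, and $B_W\in|-2(K_W+C_W)|$ ample Cartier (Hurwitz for $\rho'$), shows $(W,C_W+\fraceps B_W)$ is a del Pezzo ADE surface in the sense axiomatized in \cite[Lemma~2.3]{alexeev17ade-surfaces}. Comparing the eigen-line-bundle decomposition of the $G$-cover $X\to W$ with Remark~\ref{rem:two-dual-covers} identifies the eigenbundles as $\cO_W$, $\cO_W(A)$ (for $Y$), $\cO_W(K_W+C_W)$ (for $Z'$), $\cO_W(K_W+C_W+A)$ (for $Z$), where $A$ generates $\Tors\operatorname{Cl}(W)=\bZ_2$; the $Z'$-eigenbundle gives $K_{Z'}+D_{Z'}\sim0$, so $(Z',D_{Z'}+\epsilon R_{Z'})$ is the index-$1$ cover of $(W,C_W+\fraceps B_W)$, hence an anticanonical ADE surface --- this is (4).

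For (6), the analogous Hurwitz computation from the $Z$-eigenbundle gives $K_Z+D_Z\sim-\rho^*A$, which is $2$-torsion since $2A\sim0$. It is nonzero: if $\rho^*A\sim0$ then the double cover $Y=\Spec(\cO_W\oplus\cO_W(A))$ would pull back along $\rho$ to a split cover of $Z$, i.e.\ $X=Y\times_W Z$ would be disconnected, contradicting the irreducibility of $X$. So $2(K_Z+D_Z)\sim0$ but $K_Z+D_Z\not\sim0$.

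The heart of the proof --- and the step I expect to be the main obstacle --- is (7)--(9), the exact distribution of the branch loci. Since $\tau$ has finite fixed set, $\varphi$ is \'etale in codimension~$1$, so $\Branch(\varphi)$ is a finite set of points and every point of $\Branch(\rho)\cup\Branch(\rho')$ lying off $C_W$ must lie over it. The content of (9) is a local study of the $G$-action near a point of $X$ above such a $w$: over the unique $\tau$-fixed point $y\in Y$ above $w$, and away from the branch divisor of $\pi$ --- which, as in \eqref{eq:basic} where $B$ misses the torus-fixed points, one should first see is disjoint from $\Branch(\varphi)$, or else treat separately --- the fiber $\pi^{-1}(y)=\{x_1,x_2\}$ is a pair interchanged by $\idp$ and preserved by $\ien$ and $\inik$; because $\inik=\idp\circ\ien$, exactly one of $\ien,\inik$ fixes $x_1,x_2$ while the other interchanges them, and chasing this through the quotients shows exactly one of $\rho,\rho'$ is branched at $w$. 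This simultaneously pins down the finite subsets of $\Branch(\varphi)$ occurring in (7) and (8) and proves (9); the components of $C_W$ appearing in (8) are handled by the analogous local analysis along $C_W$, using that $\varphi$, being branched only in points, never contains a component of $C$ in its branch locus. I expect the genuine work --- and the part most prone to parity errors --- to be this local bookkeeping, together with checking that the fixed-set hypotheses really exclude the degenerate case $y\in B$ with $\tau(y)=y$ (or, if it survives, carrying out the longer computation there with $\idp$ itself ramified at $x$).
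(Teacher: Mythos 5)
Your proposal is correct and follows essentially the same route as the paper: the Klein four-group quotient diagram, Riemann--Hurwitz for (5), the symplecticity of $\inik$ plus \cite[Lem.~2.3]{alexeev17ade-surfaces} for (4), the $(\pm1)$-eigensheaf decomposition of $\psi_*\cO_X$ together with connectedness of $X$ for (6) (your $K_Z+D_Z\sim\rho^*A$ is the same sheaf in different notation), and the local analysis of the $G$-orbit in the fiber $\pi^{-1}(q)$ over a $\tau$-fixed point for (7)--(9). The degenerate case you flag (a $\tau$-fixed point lying on $B$) is in fact excluded by the hypotheses, since at such a point $d\inik$ would have determinant $-1$, contradicting that $\inik$ is symplectic; the paper passes over this silently, so your extra caution is warranted but resolvable.
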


\begin{proof}
  (1)--(3) are straightforward. Since $\inik$ is symplectic,
  $\cO(K_X+D)\simeq \cO_X$, and taking the $\inik$-invariants gives
  $\cO(K_{Z'}+D_{Z'})\simeq \cO_{Z'}$.  (4) and (7) follow from this by
  \cite[Lem.~2.3]{alexeev17ade-surfaces}. (5) holds by the Riemann-Hurwitz formula.

  The following argument applies to both $T=Z$ or $Z'$, $\iota=\ien$
  or $\inik$. The image of $K_X+D$ under the norm map between Cartier
  divisors is $2(K_T+D_T)$, thus $2(K_T+D_T)\sim 0$. One has
  $\cO_X = \cO_T \oplus \cA$ for a divisorial sheaf $\cA$ on $T$. The
  sheaves $\cO_T$, $\cA$ are the $(\pm 1)$-eigenspaces for the action of
  $\iota^*$ on $\cO_T$. Also, $\cA\not\simeq\cO_T$ since $X$ is
  connected. Since $\cO_X(K_X + D)=\cO_X$, we get
  $\cO_{Z'}(K_{Z'} + D_{Z'}) = \cO_{Z'}$ and
  $\cO_Z(K_Z+D_Z)= \cA\not\simeq \cO_Z$. This proves (6).

  For (8) and (9), consider $p\in\Branch(\varphi)$, $p\notin C_W$ and
  let $q=\varphi\inv(p)$. Then the preimage $\rho\inv(q)$ consists of
  two points $r_1,r_2$ interchanged by $\idp$. One has $p\in
  \Branch(\rho)$ iff $\ien(r_1)=\ien(r_2)$ iff
  $\inik(r_1)\neq\inik(r_2)$ iff $p\not\in\Branch(\rho')$. 
\end{proof}

One could say that the ADE surfaces $Z'\to W$ are obtained by
folding the ADE surfaces $X\to Y$ by the symplectic involution
$\inik$, and $Z\to W$ are obtained from $X\to Y$ by folding by the
nonsymplectic involution $\ien$.  The index-$1$ cover
$\rho'\colon Z'\to W$ and the index~$2$ cover
$\rho\colon Z\to W$ are dual in a similar way to
Remark~\ref{rem:two-dual-covers}.

\smallskip

In the next two sections we find several examples of such foldings,
naturally corresponding to foldings of ADE Dynkin diagrams,
producing some non simply laced Dynkin diagrams of $B$ and $C$
types. The smaller versions of these examples can be found in
Figures~\ref{fig-par2}, \ref{fig-pars}, \ref{fig-maxconn-ell2},
\ref{fig-maxconn-ell}. The involutions appearing at Cusp 5 are
described in Section~\ref{sec:toric-involutions}, and those appearing
at other cusps in Section~\ref{sec:polytope-involutions} below.

For the parabolic diagrams, we
follow Vinberg's conventions \cite{vinberg1972-groups-of-units}: the
$\wD_n$ diagram has two forks, $\wB_n$ has one, and $\wC_n$ is a chain
without forks.

\subsection{Quotients by $\pm1$ in the torus}
\label{sec:toric-involutions}

We first consider the Enriques involution on an ADE anticanonical
surface $X = \{z^2 + f(x,y)=0\}$ that is given by the same formula
$\ien\colon (x,y,z)\to (-x,-y,-z)$ as in
Section~\ref{sec:main-diagram}. The pairs $(X,D)$ of this type appear very
naturally in Horikawa's construction,  when $\bP^1\times\bP^1$
degenerates to a stable surface $Y=\cup (Y_i,D_i)$. As in
Section~\ref{sec:main-diagram}, let
$\bZ^2_\even = \{(a,b) \mid a+b\in 2\bZ \}$. We have
$2\bZ^2 \subsetneq \bZ^2_\even \subsetneq \bZ^2$.

Let $Q$ be one of
the ADE polytopes of Sections~\ref{sec:ade-type3},
\ref{sec:ade-type2} above and assume that the monomials of $f(x,y)$
lie in $\bZ^2_\even$. This means that the bold blue edges are long,
the Dynkin diagram ends in odd vertices on the boundary, and there are
no minus or prime decorations.

We then have four surfaces as in Diagram~\eqref{eq:basic2}. Our
notation for the covers will be $\alpha:2={}_2\beta\subset\gamma$, where
$\alpha$ is the ADE type of $X\to Y$, $\gamma$ is the ADE type of
$Z'\to W$, and ${}_2\beta$ is the ABCDE type of the index-$2$ cover
$Z\to W$; or simply $\alpha:2={}_2\beta$ if $\beta=\gamma$.

\begin{lemma}
  There exist diagrams of the following types:
  \begin{enumerate}
  \item $A_{4n-1} : 2 = {}_2A_{2n-1}$ and $A_{4n+1} : 2 = \mAtwo_{2n-1}^-$
  \item $A_{4n+1} : 2 = {}_2A^-_{2n}$ and $A_{4n+1} : 2 = {}_2A_{2n}^-$
  \item $D_{4n} : 2 =  \pB_{2n}^- \subset \pD^-_{2n+1}$ and
    $D_{4n+2} : 2 =  \pB_{2n+1}^- \subset \pD_{2n+2}$
  \item $\wD_{4n} : 2 =  {}_2\wC_{2n} \subset \wD_{2n+2}$
  \item $\wE_7 : 2 =  {}_2\wB_3 \subset \wD_4$
  \item $\wA_{4n-1} : 2 =  {}_2\wA_{2n-1}$
  \end{enumerate}
\end{lemma}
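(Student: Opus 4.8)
The strategy is to realize each of the six families by an explicit
  polytope and then to invoke Proposition~\ref{prop:two-covers}.

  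First I would fix, for each case, the relevant ADE polytope $Q$: the
  Type III polytopes of Section~\ref{sec:ade-type3} for
  $A_{4n-1}$, $A_{4n+1}$, $D_{4n}$, $D_{4n+2}$, and the Type II
  polytopes and Tate-curve models of Section~\ref{sec:ade-type2} for
  $\wD_{4n}$, $\wE_7$, $\wA_{4n-1}$. In each case one keeps only those
  monomials of $f(x,y)$ lying in
  $\bZ^2_\even=\{(a,b)\mid a+b\in 2\bZ\}$. This is exactly the condition
  that $\ien\colon(x,y,z)\to(-x,-y,-z)$ define an involution of
  $X=\{z^2+f=0\}\subset V_P$, and it forces the bold blue edges to be
  long and the Dynkin diagram to terminate in odd boundary vertices, so
  that $X\to Y$ is the undecorated ADE surface of the stated type.
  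Because $f$ still contains the extreme monomials of $Q$ and $z^2$
  corresponds to the apex of $P$, the surface $X$ avoids the
  torus-fixed points fixed by $\ien$ and $\inik$; hence $\idp$ and the
  induced involution $\tau$ on $Y$ have finite fixed loci, and
  Proposition~\ref{prop:two-covers} applies, producing
  Diagram~\eqref{eq:basic2}.

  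Second, I would identify the four surfaces. The del Pezzo surface
  $W=Y/\tau$ is toric for the same $Q$ but for the overlattice
  $\bZ^2_\even\supset 2\bZ^2$; choosing a basis of $\bZ^2_\even$ (for
  instance $u=x^2$, $v=xy$) rewrites $Q$ as the ``folded'' Dynkin
  polytope, from which one reads off the type of $W$ and of its
  index-$1$ cover $Z'$ (the $\pD$, $\wD$ shapes appearing in
  (3)--(6)). The surface $Z=X/\ien$ has the same underlying polytope as
  $Z'$ but the twisted structure sheaf $\cO_W\oplus\cO_W(K_W+A)$ of
  Remark~\ref{rem:two-dual-covers}; this is what the subscript ${}_2$
  records, the non simply laced $\wB$, $\wC$ labels being used
  precisely when the folded diagram is non simply laced. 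Running
  through the polytopes one at a time, the interior nodes surviving the
  $\bZ^2_\even$-restriction together with the parity of the surviving
  endpoints produce exactly the shapes listed in (1)--(6): for instance
  $2n-1$ interior nodes between two long endpoints give
  $A_{4n-1}:2={}_2A_{2n-1}$, an endpoint turning short accounts for the
  $A^-$ and $\pB^-$ decorations in (1)--(3), and the $\wD_{4n}$,
  $\wE_7$, $\wA_{4n-1}$ polytopes fold onto the $\wC_{2n}$, $\wB_3$,
  $\wA_{2n-1}$ shapes of (4)--(6).

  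For the Type II families one further check is needed: $\inik$, being
  symplectic, restricts to the anticanonical elliptic curve $E$ as a
  $2$-torsion translation, so $\rho'$ is unramified over $E$ and $Z'$
  genuinely carries the extended $\wD$ (resp.\ $\wA$) shape, while the
  nonsymplectic $\ien$ restricts to $E$ as an involution with two fixed
  points, which produces the folded $\wC$ (resp.\ $\wB$) shape on $Z$;
  this is consistent with the ramification statements
  Proposition~\ref{prop:two-covers}(7)--(9). The hard part will be
  exactly this bookkeeping --- reading the decorations, the subscript
  ${}_2$, and the $B$-versus-$C$ distinction off the folded polytopes
  --- since the existence of the diagrams themselves is immediate from
  Proposition~\ref{prop:two-covers} once the polytopes have been
  written down.
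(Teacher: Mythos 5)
Your proposal is correct and takes essentially the same route as the paper: verify the hypotheses of Proposition~\ref{prop:two-covers}, identify $(W,C_W)$ as the toric surface for the same polytope $Q$ with respect to the sublattice $\bZ^2_\even$, and read off the folded types from the classification of ADE surfaces in \cite{alexeev17ade-surfaces}. The paper's proof is simply a much terser version of this bookkeeping (citing Tables 2 and 3 of that reference for the identifications), and your extra remarks on the Type~II elliptic curve are consistent elaborations rather than a different argument.
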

\begin{proof}
  The conditions of Proposition~\ref{prop:two-covers} are immediate to
  check.  Let $Q$ be the polytope corresponding to the toric surface
  $Y$.  The surface $(W,C_W)$ is toric for the same polytope $Q$ and
  the lattice $\bZ^2_\even$, so its ADE type is easy to find. In
  case (1) we get $A_n$ and $\mA_n$. In case (2) it is the $\pD$ type,
  as can be seen in \cite[Fig.~9]{alexeev17ade-surfaces}. The other
  three cases are checked similarly, with the aid of
  \cite[Tables 2, 3]{alexeev17ade-surfaces}. 
\end{proof}

Thus, we describe the index-$2$ anticanonical surface $(Z,D_Z)$ in
two ways:
\begin{enumerate}
\item as the quotient of $(X,D)$ by $\ien$, and
\item as an index-$2$ cover of a del Pezzo ADE surface
  $(W,C_W)$. 
\end{enumerate}
The first way presents $Z$ as a hypersurface in the toric variety
$V_P$ for the same polytope $P$ as $X$ but for a new lattice
$\bZ^3_\even = \{(a,b,c) \mid a+b+c \in 2\bZ \}$.

The branch locus of $\varphi\colon Y\to W$ consists of:
\begin{enumerate}
\item The torus-fixed points corresponding to the vertices of $Q$. Let
  us denote the distinguished vertex of $Q$ by $c$ and the adjacent corners of $Q$ 
  by $v_i$.
\item The boundary divisors corresponding to the sides $(c,v_i)$ of
  $Q$ which are long with respect to the lattice $\bZ^2_\even$.
  We number them by $i$ with $i\equiv 0\pmod 4$.
\end{enumerate}
By Proposition~\ref{prop:two-covers}, $\rho\colon Z\to W$ is
branched at the point for the distinguished vertex $c$ and 
in the boundary divisors for the sides $C_i = (c,v_i)$ with
$i\equiv 0\pmod 4$. There are two $A_1$ singularities over each point
in $C_i\cap B_W$. Also, $\rho$ is unramified over the points for $v_i$
with $i\equiv 2\pmod 4$.

\begin{example}
  In Figure~\ref{fig-square-triangle}, consider the square $Q$ with the
  vertices $(0,0)$, $(2,0)$, $(0,2)$, $(2,2)$.  Let $Y$ be the
  corresponding toric variety, and and let $C_1$, $C_2$ be the
  boundary curves for the two sides passing through the central point
  $(2,2)$.  Then $Y=\bP^1\times\bP^1$, $C=C_1+C_2$ are the fibers of
  two $\bP^1$-fibrations, and $B\in |\cO(2,2)|$.  Both $Y$ and
  $W=Y/\tau$ are toric varieties corresponding to the square $Q$
  but for different lattices: $\bZ^2$ and $\bZ^2_\even$, as in
  Section~\ref{sec:main-diagram}.  $W$ has four $A_1$ singularities at the
  torus-fixed points corresponding to the corners of $Q$.

  The surface $(Y, C + \fraceps B)$ is a del Pezzo ADE surface of
  type $D_4$, and $(W, C_W)$ is a del Pezzo ADE surface of type
  $\pD_3^-$. The index-$2$ cover corresponds to the $B_2$ diagram and
  we denote it $\pB_2^-$. 

  The index-$2$ cover $Z\to W$ is branched in $B_W$ and at the two
  torus-fixed points where $K_X+C_W$ is Cartier. The corresponding
  index-$1$ is branched at $B_W$ and at the \emph{other} two
  torus-fixed points.
\end{example}

\begin{example}
  In Figure~\ref{fig-square-triangle}, let $Q$ be the triangle with vertices $(0,0)$,
  $(2,0)$, $(2,2)$ and $C=C_1+C_2$ be the boundary curves passing
  through the sides through $(2,2)$. Then $Y=\bP^2$ and
  $B\in \cO(2)$.
  The surface $W$ is the quadratic cone $\bP(1,1,2)$. 
  The ADE-type of $(Y,C)$ is $A_1$ and the ADE-type
  of $(W,C_W)$ is $A_0^-$.  

  The index-$2$ cover $Z\to W$ is branched in $B_W$ and the long side
  of $C_{1,W}$ of $Q$ in $\bZ^2_\even$.  It has two
  $A_1$ singularities above $C_{1,W}\cap B_W$ and two more
  above the apex of $\bP(1,1,2)$.
  The corresponding index-$1$ cover of $W$ instead is branched in
  $B_W$ and at the apex of $\bP(1,1,2)$, and is isomorphic to
  $\bP^2$.
\end{example}

\subsection{Quotients by polytope involutions}
\label{sec:polytope-involutions}

Now consider an ADE polytope $Q$ which has an involution that in
some coordinates can be written as $\tau\colon (x,y) \to (x\inv,
-y)$. For the anticanonical ADE surface $X = \{z^2 + f(x,y)=0\}$ we
choose the involution $\ien\colon (x,y,z)\to (x\inv, -y,-z)$. 

In the $\wA_{2n-1}$ case, the involution that is centered at
$0\in \bZ_{4n}$ is
\begin{math}
  \ien\colon (x,y,z) \to (qx\inv, -y, -z).
\end{math}
This sends $\theta_i$ to $\theta_{4n-i}$, and $z^2+f$ is
$\ien$-invariant iff $c_i = c_{-i}$. Similarly, one can define
involutions centered at any $i$ with $4|i$.

\begin{lemma}
  There exist diagrams of the following types:
\begin{enumerate}
\item $A_{4n-1} : 2 =  \ppB_{2n}\subset \ppD_{2n+2}$ and  
  $\mA^-_{4n-3} : 2 =  \ppB^-_{2n-1}\subset \ppD_{2n+1}^-$
\item $\pA'_{4n-1} : 2 =  \ppB'_{2n} \subset \ppD'_{2n+2}$ 
\item $\wD_{4n} : 2 = {}_2\wB_{2n} \subset \wD_{2n+2}''$ 
\item $\wA_{4n-1} : 2 = {}_2\wC_{2n} \subset \wD_{2n+4}''''$ 
\end{enumerate}
\end{lemma}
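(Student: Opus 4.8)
The plan is to establish all four lines in the same way as the analogous lemma in Section~\ref{sec:toric-involutions}: in each case produce an explicit anticanonical $\wA\wD\wE$ surface $X=\{z^2+f(x,y)=0\}$ of the stated type together with the involution $\ien\colon(x,y,z)\mapsto(x^{-1},-y,-z)$ (in the $\wA_{4n-1}$ line, $\ien\colon(x,y,z)\mapsto(qx^{-1},-y,-z)$ centered at $0$), check that the hypotheses of Proposition~\ref{prop:two-covers} hold, and then read off the $\wA\wD\wE$ type of the del Pezzo quotient $(W,C_W)$ and of the index-$2$ cover $(Z,D_Z)$ from Diagram~\eqref{eq:basic2}.

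First I would fix the polytopes. For items (1) and (2), $X$ is one of the $A$-shaped surfaces of Section~\ref{sec:ade-type3}: for $A_{4n-1}$ and $\mA^-_{4n-3}$ the relevant rectangle, and for $\pA'_{4n-1}$ its once-primed version; for item (3) the $\wD_{4n}$ polytope; for item (4) the Tate-curve model of $\wA_{4n-1}$ from Section~\ref{sec:ade-type2}. In every case $Q$ is symmetric under the lattice reflection $(a,b)\mapsto(-a,b)$ through the column of the distinguished vertex --- this is the linear part of $\tau\colon(x,y)\mapsto(x^{-1},-y)$ --- and one chooses the coefficients of $f$ symmetrically ($c_i=c_{-i}$ in the Tate normalization), so that $f$ is $\tau$-invariant. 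It is then routine to verify that $\ien$ is a well-defined involution of $X$ commuting with $\idp\colon(x,y,z)\mapsto(x,y,-z)$, that it is nonsymplectic (a short computation on $\operatorname{Res}\frac{dx\wedge dy\wedge dz}{z^2+f}$, using that $x\mapsto x^{-1}$ sends $\frac{dx}{x}$ to $-\frac{dx}{x}$), and that $\ien$ and the induced involution on $Y$ have finite fixed sets --- the fixed points lie over $x=\pm 1$ and over the torus-fixed points on the $\tau$-symmetric sides. Proposition~\ref{prop:two-covers} then produces Diagram~\eqref{eq:basic2} in each case.

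The heart of the argument is the identification of $(W,C_W)=Y/\tau$, and here, in contrast to Section~\ref{sec:toric-involutions}, $\tau$ is \emph{not} a torus element, so $W$ is not a toric variety. I would analyze $Y/\tau$ through the invariant ring of $\tau$: with $u=x+x^{-1}$, $w=y^2$, $v=y(x-x^{-1})$ one gets $v^2=w(u^2-4)$, exhibiting $Y/\tau$ (birationally) as a double cover of the $(u,w)$-plane branched along $\{w=0\}\cup\{u=2\}\cup\{u=-2\}$. From this the reduced boundary $C_W$, the nonklt locus, and the two $A_1$-singularities over $x=\pm1$ are visible; resolving and comparing with \cite[Tables~2,~3]{alexeev17ade-surfaces} and the priming rules of Section~\ref{sec:decorations} yields the primed type $\ppD_{2n+2}$, resp. $\ppD'_{2n+2}$, $\wD_{2n+2}''$, $\wD_{2n+4}''''$, of the index-$1$ cover $Z'$, and hence the type $\ppB_{2n}$, $\ppB'_{2n}$, ${}_2\wB_{2n}$, ${}_2\wC_{2n}$ of the index-$2$ cover $Z$. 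The resulting $B$- or $C$-diagram must also coincide with the diagram obtained from the $A$, resp. affine $\wD$, $\wA$, Dynkin diagram of $X$ by the chain-reversal, resp. cycle-reflection, induced by $x\mapsto x^{-1}$, and the rank shifts are then a bookkeeping of the orbits of $\tau$ on the nodes together with the exceptional nodes produced in resolving the quotient singularities. The $\wA_{4n-1}$ line runs the same way but with the cone over the Tate curve in place of a polytope: $\ien$ sends $\theta_i\mapsto\theta_{4n-i}$, the quotient of the elliptic base by $x\mapsto qx^{-1}$ is again a $\bP^1$, and one tracks the branch points as before.

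The main obstacle is precisely this identification of the non-toric quotient $W$: because $W$ is not toric one cannot simply ``read the polytope in the even sublattice'' as in Section~\ref{sec:toric-involutions}, so one must resolve $Y/\tau$ for the non-toric involution $(x,y)\mapsto(x^{-1},-y)$ and keep careful track of which boundary components become disjoint from $R'$ and are contracted by $|mR'|$ --- this is exactly what produces the extra primes and the decorations $\wD''$, $\wD''''$ --- and one must check that $\tau$ acts on the anticanonical cycle in an orientation-reversing way, so that the folded Dynkin diagram is the expected connected $B$- or $C$-diagram rather than something disconnected or of a different type. I expect items (1) and (2) to be the more delicate, because of the single- versus double-priming bookkeeping, while the affine items (3) and (4) should be comparatively mechanical once the $\wD$ resolution picture is in place.
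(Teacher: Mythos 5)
Your proposal follows the paper's own route: verify the hypotheses of Proposition~\ref{prop:two-covers} for the involution $\ien\colon(x,y,z)\mapsto(x^{-1},-y,-z)$ and then identify the type of $(W,C_W)$ by locating the singularities of the quotient and matching against \cite[Tables~2,~3]{alexeev17ade-surfaces}. The only cosmetic difference is that you compute $Y/\tau$ via the invariant ring $v^2=w(u^2-4)$, whereas the paper passes to the minimal resolution (e.g.\ $\bF_{2n}$ for $A_{4n-1}$) and counts the four fixed points directly; both yield the same singularity data, so this is essentially the paper's proof.
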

\begin{proof}
  The conditions of Proposition~\ref{prop:two-covers} are immediate to
  check. The ADE types are easily found by locating the
  singularities in the nonklt locus of $(W,C_W)$ in
  \cite[Tables~2, 3]{alexeev17ade-surfaces}.
\end{proof}

\begin{example}
  Consider the case $A_{4n-1} : 2 = \pB_{2n}$. Then $Y=\bP(1,1,2n)$
  with the minimal resolution $\wY=\bF_{2n}$. The induced involution on
  $\wY$ has four fixed points, two on the $(-2n)$-section and two on a
  $(+2n)$-section. On the quotient of $\wY$ by the induced involution
  this gives four $A_1$ singularities. It follows that $W$ has three
  singularities, one on $C_W$ whose resolution graph is a chain of
  curves with $-E_i^2$ equal $(2,n+1,2)$ and two outside of
  $C_W$. From \cite[Table~2]{alexeev17ade-surfaces} we read off that
  the ADE type of $(W,C_W)$ is $\ppD'_{2n+2}$. 

  The simplest form of the equation of $X$ is $z^2+f$, where
\begin{displaymath}
  f= y^2 + x^{-2n} + \sum_{i=1}^{2n-1} a_ix^{-2n+i} + a_{2n} +
  \sum_{i=1}^{2n-1} a_{4n-i}x^{2n-i}
\end{displaymath}
with the involution 
\begin{math}
  \ien\colon (x,y,z) \to (x\inv, -y, -z).
\end{math}
This equation is symmetric iff $a_i = a_{4n-i}$ for
$i=1,\dotsc, 2n-1$, giving $2n$ free parameters. In alternative
coordinates $u=y+\sqrt{-1} z$, $v=y-\sqrt{-1} z$, the equation is
$uv+f=0$ so that the variable $v=-fu\inv$ can be eliminated, and the
involution is
\begin{math}
  (x,u)\to (x\inv,-u).
\end{math}
We note that for $A_{4n-3}$ a similar involution has a curve in the
fixed locus, so it is not of Enriques type. 
\end{example}

\smallskip

\section{KSBA stable degenerations of Enriques surfaces}
\label{sec:stable-models}

\subsection{Type III stable models of K3 surfaces}
\label{sec:type3-stable-k3} 

The Type III and Type II degenerations of K3 surfaces in $\ofhyp$,
i.e. of degree~$4$ K3 surfaces with a del Pezzo involution are described in detail
in the last section of \cite{alexeev2022mirror-symmetric}. We briefly
recall it, beginning with the Type III degenerations. There are two
$0$-cusps with the lattices $e^\perp/e=(18,2,0)_1$ and $(18,0,0)_1$
which were shown in Figure~\ref{fig-k3-cusps}.  At each of these cusps there
is a unique maximal degeneration. These are shown in
Figure~\ref{fig-maxdegs}. Note the uncanny resemblance the
Coxeter diagrams, shown in Figure~\ref{fig-k3-cusps}. The
similarities between the two figures become even more pronounced when
describing the non-maximal degenerations. 

\begin{figure}[htbp]
  \includegraphics[width=300pt]{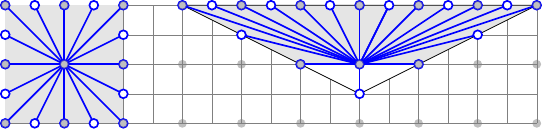} 
  \smallskip
  \caption{Maximal degenerations of K3 surfaces for $(18,2,0)_1$
    and $(18,0,0)_1$ cusps of $\fhyp$}
  \label{fig-maxdegs} 
\end{figure}

For the $(18,2,0)_1$-cusp, the maximal degeneration is a union of $16$
surfaces of $A_0^-$ type, which is $\bP^2$ with a del Pezzo involution such
that the quotient is the quadratic cone $\bP(1,1,2)$.  We may
symbolically write it as $(A_0^-\mA_0)^8$. This degeneration corresponds
to the empty subdiagram of $G_r(18,2,0)$.

For an elliptic subdiagram $G\subset G_r(18,2,0)$, each
\emph{relevant} component (i.e. not lying entirely in the interior of
the square) gives an ADE surface. Then the corresponding KSBA
degeneration is their union glued along double curves. The ADE
surfaces are obtained by smoothing some of the double curves in the
maximal degeneration; these edges correspond to the vertices in
$G$. All of the degenerations are of the ``pumpkin type'', see
\cite[Fig.~2]{alexeev2022mirror-symmetric}.

There is however a caveat: the $C_3$ diagram in the third row of
Figure~\ref{fig-maxconn-ell} should be treated instead as an $\pA_3'$
diagram. This is because the diagrams $G$ are supposed to be
subdiagrams of $G_2$, for the reflection group generated by the
$(-2)$-roots, and $G_r$ is the Coxeter diagram for the full
reflection group, which includes both $(-2)$ and $(-4)$-roots. There is a simple
dictionary to translate from one to another, see
\cite[Fig.~15]{alexeev2022mirror-symmetric}.

\medskip

At the $(18,0,0)_1$-cusp, the degenerations are of the ``smashed
pumpkin type'', as in  \cite[Fig.~2]{alexeev2022mirror-symmetric}. It
can be understood in the following way. Begin with a union of $18$ surfaces of
$A_0^-$ type, $\cup_i^{18} (V_i,D_i)$, where $V_i\simeq \bP^2$ with an
involution $(x,y,z)\to (x,y,-z)$. The ramification divisor on $V_i$ is
a line, and the boundary curves $D_1$, $D_2$ are a line and a
conic. Consider two neighboring $V_i$, $V_{i+1}$ that are glued along
a line $D_1$, so that $R\cap D_1=p$ is a point. 
Blow up this point in each of the surfaces to get $V'_i$ and
$V'_{i+1}$, both isomorphic to $\bF_1$. The strict preimage of $R'\cap
V'_i$ is now a fiber $f$ of $\bF_1$, and same for $V'_{i+1}$. Contract by
the linear system $|f|$. Then $V_i\cup V_{i+1}$ collapses to 
$\bP^1\cup \bP^1$ and the entire surface $\cup_{i=1}^{18} V_i$ which
previously was represented by a ``pumpkin'' is partially collapsed,
with the north and south poles colliding. 

For the non-maximal degenerations we begin with a Coxeter diagram
$G_r(19,1,1)$ as in \cite[Fig.~4.1]{alexeev2023stable-pair}. An
elliptic subdiagram $G$ of this Coxeter diagram, as in the case above, corresponds to a union of
ADE surfaces. We then perform the move described above
to partially collapse it. The edge between $V_i$ and
$V_{i+1}$ is always contracted, bringing the north and south poles of
the pumpkin together. The components $V_i$ and $V_{i+1}$ are collapsed
only if they are of the $A_0^-$ type, i.e. the conic $D_2$ in $\bP^2$
was not smoothed out. 

\subsection{Type II stable models of K3 surfaces}
\label{sec:type2-stable-k3}

The stable models in this case are very similar
to the Type III models described above. They correspond to
maximal parabolic subdiagrams $G\subset G_r$. After throwing away
irrelevant connected components of $G$, each of the remaining
components is a $\wA\wD\wE$ subdiagram, giving an $\wA\wD\wE$ surface.

\subsection{Type III stable models of Enriques surfaces}
\label{sec:type3-stable-en}

\begin{figure}[htpb]
  \centering
  \includegraphics[width=330pt]{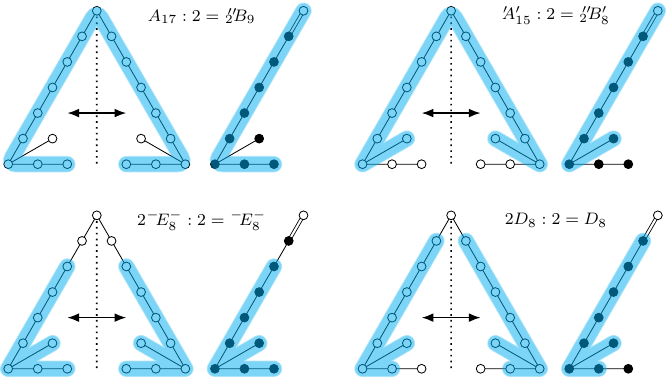}
  \caption{Max connected elliptic diagrams for $0$-cusp 2}
  \label{fig-maxconn-ell2}
\end{figure}

\begin{figure}[htpb]
  \centering
  \includegraphics[width=360pt]{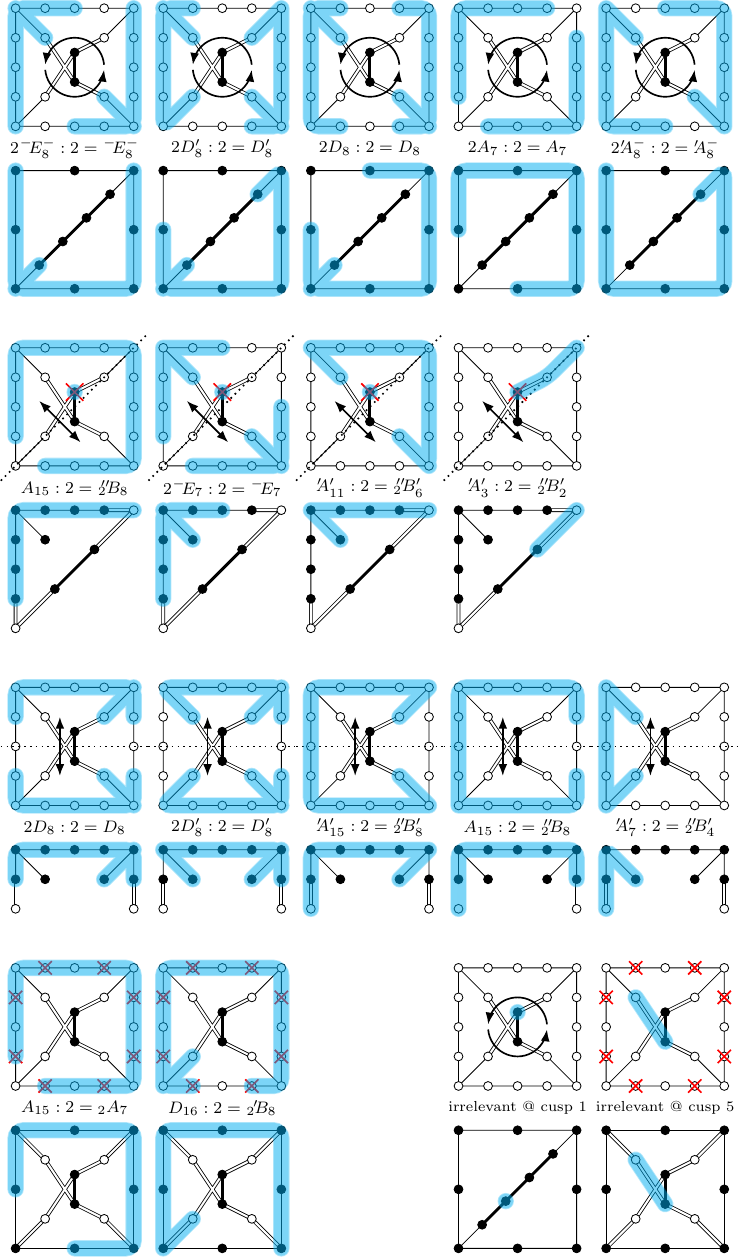}
  \caption{Max connected elliptic diagrams for $0$-cusps 1, 3, 4, 5}
  \label{fig-maxconn-ell}
\end{figure}

By Corollary~\ref{hk-cor} and the proof of Theorem~\ref{thm:main}, the description of the
KSBA stable limit of Enriques pairs $(Z,\epsilon R_Z)$ are now
straightforward: these are simply quotients of KSBA stable limits of
K3 pairs $(X,\epsilon R)$ by an Enriques involution. The latter acts
in different ways, depending on the $0$-cusp of $\fen$. The action is
determined by the folding of the Coxeter diagram, as in
Figures~\ref{fig-cusp2} and \ref{fig-cusps1345}. Let us spell them
out, representing the surface $(X,\epsilon R)$ by a sphere $S^2$.

\smallskip (1) At the cusp 1, the action on $S^2$ is antipodal, with
the quotient $\bR\bP^2$. So all irreducible components of
$X=\cup V_i$ are interchanged in pairs $V_i\simeq V_{\sigma(i)}$. Then
the normalization of $V_i\cup V_{\sigma(i)} / \ien$ is isomorphic to
the normalization of $V_i$.

\smallskip (2, 3, 4) At these cusps the action on $S^2$ is a reflection
which is different from the equatorial reflection defined by
$\idp$. Some of the components $V_i$ of $X$ come in pairs, and some
are fixed by $\ien$. The latter ones are the $B_n$ surfaces of
Section~\ref{sec:polytope-involutions}.

\smallskip (5) Here the action of $\ien$ on $S^2$ is the same as the
action of $\idp$. Each component $V_i$ is fixed by $\ien$ and the
quotients are the surfaces described in
Section~\ref{sec:toric-involutions}.

\smallskip

In Figures~\ref{fig-maxconn-ell2} and \ref{fig-maxconn-ell} we list
the maximal connected elliptic subdiagrams in the Coxeter diagrams,
for each of the five $0$-cusps of $\fen$. These then describe the
largest possible irreducible components in $X$ and $Z=X/\ien$. All
other irreducible components correspond to the subdiagrams 
of these maximal ones, which are preserved by the folding symmetry.

The surfaces are glued according to the Coxeter diagram.

\begin{example}
Consider the first surface $2\mE^-_8 : 2 = \mE^-_8$ in 
Figure~\ref{fig-maxconn-ell}. The degenerate Enriques surface is
irreducible and its normalization is an ADE surface of type
$\mE_8^-$. It is then glued to itself by an isomorphism $D_1\to
D_2$ between the two sides. 
\end{example}

The Coxeter diagrams in Figures~\ref{fig-cusp2} and
\ref{fig-cusps1345} also describe the ramification divisor $R_Z$ on
the Type III degenerations $Z$. The boundary of each Coxeter diagram
for the Cusps 1, 2, 3, 4, 5, i.e.~the image of the boundary of the
square or the triangle, represents the ramification divisor
$R_Z$. Thus, in Cusps 1 and 5, $R_Z$ is a cycle, and in 
the other three cusps it is a chain. 

\subsection{Type II stable models of Enriques surfaces}
\label{sec:type2-stable-en}

Similarly, the irreducible components of Type II degenerations are
described by the relevant components of the maximal parabolic subdiagrams
in the Coxeter diagrams. We listed them in Figures~\ref{fig-par2} and
\ref{fig-pars}. The folded Type II surfaces are described in
Sections \ref{sec:toric-involutions} (cusp 5) and
\ref{sec:polytope-involutions} (cusps 2, 3, 4).

\bibliographystyle{amsalpha}

\def\cprime{$'$}
\providecommand{\bysame}{\leavevmode\hbox to3em{\hrulefill}\thinspace}
\providecommand{\MR}{\relax\ifhmode\unskip\space\fi MR }
\providecommand{\MRhref}[2]{%
  \href{http://www.ams.org/mathscinet-getitem?mr=#1}{#2}
}
\providecommand{\href}[2]{#2}

\end{document}